\DeclareMathOperator{\clsp}{\overline{span}}
\DeclareMathOperator{\Aut}{Aut}
\renewcommand{\H}{\widetilde H}
\renewcommand{\S}{\widetilde S}
\newcommand{\I}{\widetilde I}
\newcommand{\B}{\mathcal B}
\newcommand{\II}{\mathcal I}
\newcommand{\K}{\mathcal K}
\newcommand{\x}{\widetilde x}
\newcommand{\OO}{\mathcal{O}}
\newcommand{\X}{\widetilde{X}}
\newcommand{\U}{\widetilde U}
\newcommand{\F}{\widetilde F}
\newcommand{\TDelta}{\widetilde \Delta}
\newcommand{\tphi}{\widetilde \varphi}
\newcommand{\al}{\alpha}
\renewcommand{\L}{\mathcal L}
\newcommand{\C}{\mathbb C}
\newcommand{\Z}{\mathbb Z}
\newcommand{\N}{\mathbb N}
\newcommand{\T}{\widetilde{T}}
\newcommand{\dyna}{\texttt{$C^*$-dyna}}
\newtheorem{thm}{Theorem}[section]
\newtheorem{lem}[thm]{Lemma}
\newtheorem{prop}[thm]{Proposition}
\newtheorem{cor}[thm]{Corollary}
\theoremstyle{definition}
\newtheorem{defn}[thm]{Definition}
\newtheorem{ex}[thm]{Example}
\newtheorem{rem}[thm]{Remark}
\begin{document}
 \thispagestyle{empty}
   \title[Ideal structure via reversible extensions]{Ideal structure of crossed products by endomorphisms via reversible extensions of $C^*$-dynamical systems}

\author{B. K.  Kwa\'sniewski}

\address{Department of Mathematics and Computer Science, The University of Southern Denmark, 
Campusvej 55, DK--5230 Odense M, Denmark}
\address{Institute of Mathematics, Polish Academy of Science,  ul. \'Sniadeckich 8, PL-00-956 Warszawa, Poland}
\address{Institute of Mathematics,  University  of Bialystok\\
ul. Akademicka 2,  PL-15-267  Bialystok,   Poland}
\email{bartoszk@math.uwb.edu.pl}

\keywords{$C^*$-algebra, endomorphism,  crossed product, ideal structure, simplicity, reversible extension}
\subjclass[2010]{46L05}

\thanks{This research was supported by  NCN  grant number  DEC-2011/01/D/ST1/04112 and by a Marie Curie-Sklodowska Intra European Fellowship within the 7th European Community Framework Programme, project: OperaDynaDual.}

\begin{abstract} We consider an extendible endomorphism $\alpha$ of a $C^*$-algebra $A$. We associate to it a canonical $C^*$-dynamical  system $(B,\beta)$ that extends $(A,\alpha)$ and is `reversible' in the sense  that the endomorphism $\beta$ admits a unique regular transfer operator $\beta_*$. The theory for $(B,\beta)$ is  analogous to the theory of classic crossed products by automorphisms, and the key idea  is to describe the counterparts of classic notions for $(B,\beta)$ in terms of the initial system $(A,\alpha)$. 

We apply this idea to study the ideal structure of  a non-unital version of the crossed product $C^*(A,\alpha,J)$     introduced recently by the author and A. V. Lebedev. This crossed product depends on the choice of an ideal $J$ in $(\ker\alpha)^\bot$, and if $J=(\ker\alpha)^\bot$ it is  a modification of Stacey's crossed product that works well with non-injective $\alpha$'s. 

We provide descriptions of the  lattices of ideals in $C^*(A,\alpha,J)$ consisting of  gauge-invariant ideals and ideals generated by their intersection with $A$. We  investigate conditions under which these lattices coincide with the set of all ideals in $C^*(A,\alpha,J)$. In particular, we obtain simplicity criteria that besides minimality of the action require either outerness of powers of $\alpha$ or pointwise quasinilpotence of $\alpha$.

\end{abstract}
\maketitle

 \tableofcontents

\section{Introduction.}

The theory of  crossed products  of  $C^*$-algebras by endomorphisms was initiated by the ideas of Cuntz \cite{cuntz}. The original constructions were spatial, involved injective  endomorphisms  
and one of their main aims was to produce  new examples of simple $C^*$-algebras \cite{Paschke}, \cite{cuntz2}, \cite{Rordam}, \cite{Murphy}, \cite{Murphy2}, \cite{Szwajcar}. A universal definition of a crossed product by  an   endomorphism was given by Stacey \cite{Stacey}. When   adapted to  semigroup context   \cite{bkr}, \cite{adji} it was investigated by many authors in   connection with Toeplitz algebras of semigroups of isometries \cite{alnr}, \cite{quasilat}, Bost-Connes Hecke algebras arising from number fields \cite{bc-alg}, \cite{hecke5},  phase transitions   \cite{diri} or short exact sequences and tensor products \cite{larsen}. 

Despite of these achievements one has to note that  the universal  definition proposed by Stacey, even though it makes sense for an arbitrary endomorphism, is not correct when the underlying endomorphism is not injective. 
The reason is that Stacey's (multiplicity one) crossed product $A\times_\alpha^1 \N$ is isometric. This means, cf. \cite[Definition 3.1]{Stacey}, that  the endomorphism $\alpha$  implementing the dynamics on the $C^*$-algebra $A$ is represented in $A\times_\alpha^1 \N$  via the relation 
\begin{equation}\label{fundamental covariance relation}
\iota(\alpha(a))= u  \iota(a) u ^*, \qquad a\in A,
\end{equation}
where $u$ is an \emph{isometry} in $M(A\times_\alpha^1 \N)$, and  $\iota:A\to A\times_\alpha^1 \N$ is the universal homomorphism. 
This readily implies that $\iota$ cannot be injective when $\alpha$ is not injective. Accordingly, the crossed product $A\times_\alpha^1 \N$ in fact  does not depend on $(A,\alpha)$ but on the `smaller' quotient system $(A/R, \alpha_R)$ where $R=\ker\iota$ and $\alpha_R(a+R)=\alpha(a)+ R$ is a monomorphism of $A/R$  (cf. Remark \ref{remark to mentioned in the introduction} below). In other words, speaking about non-injective endomorphisms in this context has only formal character.

In the case $A$ is unital, the authors of \cite{kwa-leb} proposed   an alternative construction of a $C^*$-algebra generated  by a \emph{faithful} copy of $A$ and an operator $u$ satisfying \eqref{fundamental covariance relation}, so that  $u$ is an isometry exactly when $\alpha$ is a monomorphism. The point of departure in \cite{kwa-leb} is the remark  that the relation \eqref{fundamental covariance relation} imply that 
\begin{equation}\label{fundamental consequence of covariance relation}
u \textrm{ is a (power) partial isometry} \qquad \textrm{ and } \qquad u^*u\iota(a)=\iota(a)u^*u,\,\,\,\, a\in A.
\end{equation}
The universal $C^*$-algebra generated by objects satisfying \eqref{fundamental covariance relation}, and hence automatically \eqref{fundamental consequence of covariance relation}, was introduced and studied (in the semigroup case) in  \cite{Lin-Rae}. However,  this algebra  should be viewed as a certain Toeplitz extension of the $C^*$-algebra we seek; in particular it is not a generalization of the classic crossed product. As argued in \cite{kwa-leb}, in order to obtain a smaller algebra one should impose an additional relation. This relation can be phrased in terms of objects appearing in \eqref{fundamental consequence of covariance relation} via the formula 
\begin{equation}\label{covariance ideal first appearance}
J=\{a\in A: u^*u \iota(a)=\iota(a)\}
\end{equation}
 where $J$ is an ideal  in $A$. Stipulating that $\iota$ is faithful, one deduces that $J$ must be contained in the annihilator $(\ker\alpha)^\bot$ of the kernel of $\alpha$. For any ideal $J$ in $(\ker\alpha)^\bot$ the crossed product $C^*(A,\alpha,J)$ studied in \cite{kwa-leb} is a universal $C^*$-algebra with respect to relations \eqref{fundamental covariance relation}, \eqref{covariance ideal first appearance}.
The appropriate modification of Stacey's crossed product  is the $C^*$-algebra $C^*(A,\alpha):=C^*(A,\alpha,(\ker\alpha)^\bot)$ corresponding to $J=(\ker\alpha)^\bot$.  Significantly, see \cite{kwa-leb},    the $C^*$-algebras $C^*(A,\alpha,J)$ can be naturally modeled as relative Cuntz-Pimsner algebras \cite{ms}. In this picture Stacey's crossed product corresponds to Pimsner's original construction \cite{p} while $C^*(A,\alpha)$ corresponds  to Katsura's  \cite{katsura1} modification of Pimsner's  $C^*$-algebras.

We also note that there are good reasons for studying $C^*(A,\alpha,J)$ with $J$ varying from $\{0\}$ to $(\ker\alpha)^\bot$ rather than focusing only on the unrelative crossed product $C^*(A,\alpha)$. For instance: 1) when $J=\{0\}$ we arrive at the partial isometric crossed product of \cite{Lin-Rae}; 2) as we will see below, quotients of the unrelative crossed product by gauge-invariant ideals in general  have the form $C^*(A,\alpha,J)$ where $J\subsetneq (\ker\alpha)^\bot$; 3) compressions of  weighted composition operators often generate $C^*$-algebras  of type $C^*(A,\alpha,J)$  where the dependence  on $J$ is essential, cf.  \cite[Theorem. 5.6]{kwa-logist}.
\medskip

In the present paper, motivated by  applications to non-unital $C^*$-algebras, cf. for instance  \cite{hr}, \cite{larsen}, we extend the definition   of  $C^*(A,\alpha,J)$. In our setting  it applies to   \emph{$C^*$-dynamical systems} $(A,\alpha)$ consisting of a (possibly non-unital) $C^*$-algebra $A$ and an endomorphism $\alpha:A\to A$ that extends to the strictly continuous endomorphism $\overline{\alpha}:M(A)\to M(A)$, cf. \cite{adji}. We call $(A,\alpha)$ a \emph{reversible $C^*$-dynamical system}   if $\alpha$ shifts $(\ker\alpha)^\bot$ onto a corner in $A$ and $\ker\alpha$ is a complemented ideal in $A$. Then there exists a unique  transfer operator $\alpha_*$ for $\alpha$ \cite{exel2}, \cite{brv}, such that $\alpha_*\circ \alpha$ is a conditional expectation on  $\alpha(A)$, cf. \cite{kwa-trans}, \cite{kwa-exel}. For such systems the $C^*$-algebra $C^*(A,\alpha)$ coincides with Exel's crossed product $A\rtimes_{\alpha,\alpha_*}\N$, introduced in \cite{exel2} and  adapted to non-unital case  in \cite{brv}, see also \cite{kwa-exel}. The structure of $C^*(A,\alpha)=A\rtimes_{\alpha,\alpha_*}\N$ is relatively similar to that of the classic crossed product by an automorphism, see \cite{Ant-Bakht-Leb}, \cite[Remark 1.10]{kwa-trans}, \cite{kwa-interact}. We note that in general the theories of Exel's crossed products and crossed products considered in this paper are different. As explained in  \cite{kwa-exel} they can be  naturally unified  in the realm of crossed products by completely positive maps.

Our strategy to  investigate  $C^*(A,\alpha,J)$ is similar, but yet slightly different,   to the very popular method of  dilations and corners, see \cite{Laca} as well as \cite{cuntz}, \cite{Paschke}, \cite{cuntz2}, \cite{Stacey}, \cite{Rordam}. The latter method works only for injective endomorphisms and allows one to  regard  crossed products by  endomorphisms as   (full) corners in  crossed products by automorphisms. Therefore it gives a realization of the crossed product only up to Morita equivalence. This might  be too insensitive tool when studying more  subtle problems such as, for instance, spectra of   elements of the algebras under consideration.  

In our approach we start with an arbitrary $C^*$-dynamical system $(A,\alpha)$ and an ideal $J$ in $(\ker\alpha)^\bot$.  We construct a canonical reversible $C^*$-dynamical system $(B,\beta)$ that extends $(A,\alpha)$. We call $(B,\beta)$ a \emph{natural reversible $J$-extension} of $(A,\alpha)$. It is defined as a direct limit in the category of $C^*$-dynamical systems, and identified as a universal object in the sense to be specified below. Examples of such objects in the commutative case were studied in \cite{maxid}, \cite{kwa-logist}, and when $A$ is unital  were constructed in \cite{kwa-ext}. In particular, it follows that we have a natural isomorphism 
$$
C^*(A,\alpha,J)\cong C^*(B,\beta).
$$
Our main goal is to use the above isomorphism to get a description of the ideal structure of $C^*(A,\alpha,J)$. The tactics  is as follows. Firstly, we  develop or extend to non-unital case  the tools that apply to $C^*(B,\beta)$, cf. \cite{kwa-interact}, \cite{Ant-Bakht-Leb}. Secondly, by an analysis of the relationship between $(A,\alpha)$ and $(B,\beta)$ we derive results for general crossed products $C^*(A,\alpha,J)$. The general relationship,  established in this way, between the lattices of ideals of various type  is presented by the diagram on Figure  \ref{lattice diagram}; here the arrow $A  \Longrightarrow B$ indicates that there is an order retraction\footnote{an order preserving surjection $r:A\to B$ which has an order  preserving right inverse} from the lattice $A$ onto the lattice $B$, and $A \Longleftrightarrow B$ means that $A$ and $B$ are order isomorphic  (precise descriptions will be given below).

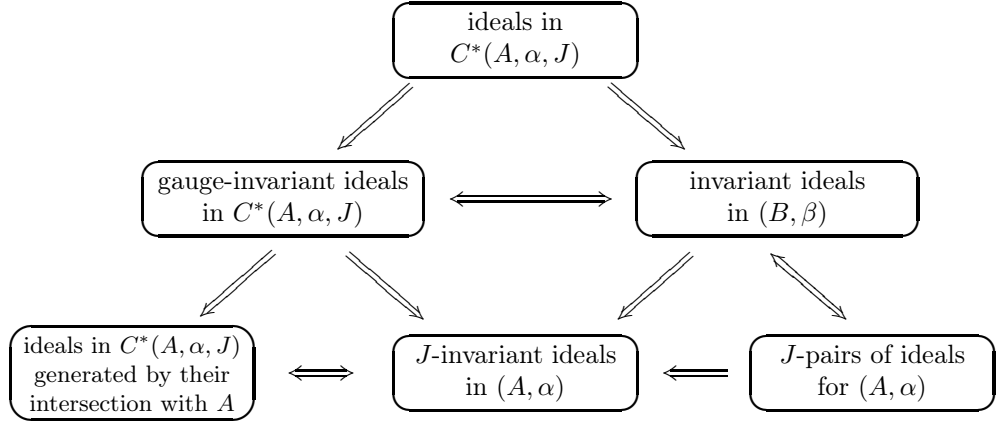
\begin{figure}[htb]
\begin{center} \begin{picture}(220,156)(0,38)
\small
 \put(65,180){\Ovalbox{\begin{minipage}{3cm}\begin{center}
ideals in $C^*(A,\alpha, J)$  
\end{center}
\end{minipage}} 
}

 
 \put(-30,120){\Ovalbox{\begin{minipage}{3.5cm}\begin{center}
gauge-invariant ideals \\
in $C^*(A,\alpha, J)$  
\end{center}
\end{minipage}}}

\put(37,167){$\xymatrix{ \,\,  & \,\, \ar@{=>}[dl]   \\      \,\,\,  & \,\,   }$ } 

\put(137,167){$\xymatrix{ \,\,   \ar@{=>}[dr]  & \,\,\, \\      \,\,\,  & \,\,   }$ } 
 \put(76,120){$\xymatrix{ \,\, \, \ar@{<=>}[rr] & \,\,\,\, & \,\,     }$ }   
\put(157,120){\Ovalbox{\begin{minipage}{3.5cm}\begin{center}
invariant ideals \\in $(B,\beta)$  
\end{center}
\end{minipage}}}

\put(-15,104){$\xymatrix{ \,\,  & \,\, \ar@{=>}[dl]   \\      \,\,\,  & \,\,   }$ } 
\put(143,103){$\xymatrix{ \,\,  & \,\, \ar@{=>}[dl]   \\      \,\,\,  & \,\,   }$ } 
\put(38,103){$\xymatrix{ \,\,   \ar@{=>}[dr]  & \,\,\, \\      \,\,\,  & \,\,   }$ } 
\put(198,103){$\xymatrix{ \,\,   \ar@{<=>}[dr]  & \,\,\, \\      \,\,\,  & \,\,   }$ } 

\put(15,54){$\xymatrix{ \,\, \, \ar@{<=>}[r] & \, \qquad \qquad  &\,\,   }$ }

\put(115,54){$\xymatrix{ \,\, \,  \, \qquad \qquad  &  \ar@{=>}[l]    }$ } 
 
\put(65,54){\Ovalbox{\begin{minipage}{3cm}\begin{center}
$J$-invariant ideals 
\\in $(A,\alpha)$  
\end{center}
\end{minipage}}} 

\put(-80,54){\Ovalbox{\begin{minipage}{3cm}\begin{center}
 \footnotesize ideals in $C^*(A,\alpha, J)$
generated by their intersection with $A$  
\end{center}
\end{minipage}}}

\put(200,54){\Ovalbox{\begin{minipage}{3cm}\begin{center}
$J$-pairs of ideals
\\
for  $(A,\alpha)$  
\end{center}
\end{minipage}}}
  \end{picture} \end{center}
  \caption{Relationships between the lattices of ideals\label{lattice diagram}}
 \end{figure}
For reversible $C^*$-dynamical systems  the theory looks quite similar to the  classic theory for automorphic actions. For instance,  adopting parts of \cite{kwa-interact} to non-unital case, we  identify a condition on $(B,\beta)$ under which all  ideals in $C^*(A,\alpha, J)$ are gauge-invariant (see Corollary \ref{freeness implies gauge-invariance} below).  One of the difficult problems is to phrase such conditions in terms of $(A,\alpha)$ and $J$, which we do in full generality in the commutative case (Proposition \ref{proposition on ideals in commutative}). Moreover,  as  it can be well seen  from our analysis,  an essential difference comparing to  the classic theory  arise for systems $(A,\alpha)$ where the kernel $\ker\alpha$ is not a complemented ideal in $A$.  For such systems, even in the unrelative crossed product $C^*(A,\alpha)$, not all gauge-invariant ideals are generated by their intersection with $A$. In order to describe  such ideals in general (Theorem \ref{lattices descriptions main thm}) we need  a pair of ideals in $A$.  Similar facts were first observed in  the context of graph $C^*$-algebras \cite{bhrs} and  for general relative Cuntz-Pimsner are described in \cite{ka3}.   

Another new phenomenon occurs in the study of simplicity of $C^*(A,\alpha,J)$ (Theorem \ref{theorem on simplicity}). We show that  $C^*(A,\alpha,J)$ is not simple unless it is the  unrelative crossed product $C^*(A,\alpha)$ and $(A,\alpha)$ is minimal. The minimality condition we use (Definition \ref{minimality definitions})   is slightly more subtle then the corresponding condition used in the unital case, see \cite{Paschke}, \cite{Murphy}, \cite{Murphy2}, \cite{Szwajcar}. Moreover, the simplicity of $C^*(A,\alpha)$ implies the following dynamical dichotomy:  either
\begin{itemize}
\item[1)] $\alpha$ is pointwise quasinilpotent, or 
\item[2)] $\alpha$ is a monomorphism and no power $\alpha^n$, $n>0$, is inner.
\end{itemize}
Conversely, for a minimal $(A,\alpha)$ we show that 1) is  sufficient for simplicity of $C^*(A,\alpha)$, and if $A$ is unital we infer from  \cite[Theorem 4.1]{Szwajcar}  that 2) implies  $C^*(A,\alpha)$ is  simple.  We also examine condition 2) in the non-unital case, for reversible systems and arbitrary systems on commutative algebras (see respectively Corollary \ref{cor for simplicity} and Theorem \ref{simplicity in commutative case}). We remark that condition 2) is related to the classic theorem of Kishimoto \cite{Kishimoto} while 1) can not occur for non-injective $\alpha$.   

It is fair to say that we could analyze $C^*(A,\alpha,J)$   as a relative Cuntz-Pimsner algebra (or even as a $C^*$-algebra of an ideal in a right tensor $C^*$-precategory, cf. \cite[Example 2.20]{kwa-doplicher}).  Such  analysis would heavily rely on the results of \cite{ka3}; we  briefly explain it in  the appendix section. However, our direct approach has several advantages. It makes the content   accessible for the broader audience and allows the presentation to be self-contained.  
Hopefully, more direct  arguments   shed  more light on the investigated problems and open the door to further future generalizations, for instance, in the purely algebraic setting, cf. \cite{AGBGP}.  Last but not least, we think that the method of using  reversible extensions developed in the paper has a potential. For example, we plan to use it  in forthcoming papers to tackle endomorphisms of $C_0(X)$-algebras and to analyze spectra of the related weighted composition operators, cf. \cite{Anton_Lebed}.

The content of the paper is essentially divided into two parts. 

Sections \ref{endomorphism section} and \ref{reversible extension subsection}  are `space free' - all considerations are performed in a category of $C^*$-dynamical systems  \dyna. In section \ref{endomorphism section} we discus reversible $C^*$-dynamical systems which are special objects in \dyna.   We also  introduce and study the notion of  $J$-covariance for morphisms in \dyna.  In section \ref{reversible extension subsection} we construct \emph{the natural reversible $J$-extension} $(B,\beta)$ of a  $C^*$-dynamical system $(A,\alpha)$ as a certain direct limit in  \dyna, and  characterize it as a universal object (Theorem \ref{granica prosta zepsute twierdzenie}). Invariant ideals and $J$-pairs for $(A,\alpha)$ are introduced  in subsection  \ref{Invariant ideals and J-pairs subsection}. The important tool is Theorem \ref{invariant ideals and J-pairs thm} where we describe   invariant ideals in $(B,\beta)$, and the corresponding quotient subsystems, in terms of  $J$-pairs for $(A,\alpha)$. 

In Section \ref{crossed products section} we investigate representation theory for $C^*$-dynamical systems.  In subsection \ref{covariant subsection} we generalize the notion of a $J$-covariant representation of $(A,\alpha)$, \cite[Definition 1.7]{kwa-leb}, and show that  such representations can  be treated as   $J$-covariant morphisms from $(A,\alpha)$ to reversible $C^*$-dynamical systems. This allows us to apply the previously elaborated tools to study the crossed product $C^*(A,\alpha,J)$, which we define in subsection \ref{subsection crossed products}. In particular, we  describe the lattice of its gauge-invariant ideals in Theorem \ref{lattices descriptions main thm} and establish  the simplicity criteria in Theorem \ref{theorem on simplicity}. In subsection \ref{Topological freeness} we exploit the notion of topological freeness defined for reversible $C^*$-dynamical systems. Finally, we illustrate the theory with a discussion of the commutative case, see subsection \ref{Commutative $C^*$-dynamical systems}.  

 We close the paper with an appendix indicating how to analyze $C^*(A,\alpha,J)$ using the general theory of relative Cuntz-Pimsner algebras. We focus here  mainly on the ideal structure, but this also  leads to  criteria for other properties of $C^*(A,\alpha,J)$ such as nuclearity or exactness.

\subsection{Preliminaries and notation}
  All  ideals in $C^*$-algebras are  assumed to be closed and two sided.  We denote by
  $
  I^\bot=\{a\in A:aI=\{0\}\}$  the \emph{annihilator} of an ideal $I$ in a $C^*$-algebra $A$ (it is the largest ideal in $A$ with the property that $I^\bot\cap I=\{0\}$).  An ideal $I$ in $A$ is called \emph{essential} if $I^\bot=\{0\}$. A \emph{multiplier algebra} $M(A)$ of $A$ can be characterized as  a maximal unital $C^*$-algebra containing $A$ as an essential ideal. We will denote the unit in $M(A)$ simply by $1$. Occasionally we will use the same symbol for units in different algebras but this should not cause  confusion.
  One of the models for $M(A)$ is  where the elements of $M(A)$ are viewed as functions on $A$ possessing (necessarily unique) adjoints:
  $$
  M(A)=\{m:A\to A: \exists_{m^*:A\to A} \,\, (ma)^*b=a^* (m^*b) \textrm{ for all } a,b\in A \}.
  $$ 
 Another approach is to view $M(A)$ as a completion of $A$ in the strict topology. A net $m_\lambda$ in $M(A)$ converges strictly to $m\in M(A)$ if and only if $m_\lambda a \to ma$ and $m_\lambda^* a \to m^*a$ in $A$, for any $a\in A$. Since  we will need notation only for closures in the strict topology we adopt the convention that   for any  subset  $I$ of $A$
 $$
 \overline{I}\textrm{ stands for the \emph{strict closure} of } I\subseteq A \textrm{ in } M(A) .
 $$
   By  homomorphisms, epimorphisms, etc. between $C^*$-algebras we always mean  $*$-preserving maps. A continuous mapping $T:A\to B$ between $C^*$-algebras $A$ and $B$ is said to be \emph{extendible}, cf. \cite{adji}, if it extends to a (necessarily unique) strictly continuous mapping $\overline{T}:M(A)\to M(B)$.  A homomorphism $T:A\to B$ is extendible if and only if for some (and  hence any) approximate unit $\{\mu_\lambda\}$  in $A$ the net $\{T(\mu_\lambda)\}$ converges strictly in $M(B)$.  Then  $\overline{T}$ is determined by the formula $\overline{T}(m)b=\lim T(m\mu_\lambda)b$, $m\in M(A)$, $b\in B$. 
If the net $\{T(\mu_\lambda)\}$ converges strictly to the unit in $M(B)$ homomorphism $T$ is called \emph{non-degenerate}.

The following relations between strict closures and strictly continuous extensions will be of constant use in the paper.
\begin{lem}\label{diamonds lemma}
Let $I$ be an ideal in $A$ and  $\overline{I}$ its strict closure in $M(A)$. We have
$$
\overline{I}=\{b \in M(A): bA \subseteq I\} \qquad \textrm{and} \qquad \overline{I}^\bot=\overline{I^\bot} 
$$
where $\overline{I}^\bot$ is the annihilator of $\overline{I}$ in $M(A)$ and $\overline{I^\bot}$ is the strict closure of the annihilator of $I$  in $A$. 
If $T:A\to B$ is an extendible homomorphism, then
$$
\ker \overline{T} =\overline{\ker T}, \qquad  (\ker \overline{T})^\bot =\overline{(\ker T)^\bot}.
$$
\end{lem}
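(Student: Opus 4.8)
The plan is to establish the four identities in order, reducing the last three to the first. The cornerstone is the set-theoretic description $\overline{I}=\{b\in M(A):bA\subseteq I\}$; once this is available, the annihilator identity and the statements about $\overline{T}$ follow with little extra work. To prove it I would argue by double inclusion, fixing an approximate unit $\{\mu_\lambda\}$ of $A$. For $\overline{I}\subseteq\{b:bA\subseteq I\}$, take a net $i_\lambda\in I$ converging strictly to $b$; then $i_\lambda a\to ba$ in norm for each $a\in A$, and since $i_\lambda a\in I$ and $I$ is closed we get $ba\in I$. For the reverse inclusion, given $b$ with $bA\subseteq I$, I would check that $b\mu_\lambda\to b$ strictly --- here $(b\mu_\lambda)^*a=\mu_\lambda b^*a\to b^*a$ uses only that $b^*a\in A$ --- while $b\mu_\lambda\in bA\subseteq I$, so $b\in\overline{I}$. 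Note this approximating net is bounded by $\|b\|$, a point I will reuse.

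Next, for $\overline{I}^\bot=\overline{I^\bot}$ I would first observe that the annihilator in $M(A)$ may be computed against $I$ itself: $m\overline{I}=\{0\}$ iff $mI=\{0\}$, the nontrivial direction following since $mi_\lambda=0$ whenever $i_\lambda\in I$ is a (bounded) net converging strictly to a given $b\in\overline{I}$, so that $mb=\lim mi_\lambda=0$ by strict continuity of left multiplication. Thus it suffices to show $\{m\in M(A):mI=\{0\}\}=\overline{I^\bot}$. The inclusion $\overline{I^\bot}\subseteq\{m:mI=\{0\}\}$ is immediate from strict convergence as above. For the converse, given $mI=\{0\}$, I would use the ideal property $\mu_\lambda I\subseteq I$ to get $(m\mu_\lambda)I=m(\mu_\lambda I)\subseteq mI=\{0\}$, so $m\mu_\lambda\in I^\bot$; combined with $m\mu_\lambda\to m$ strictly this yields $m\in\overline{I^\bot}$.

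Turning to $\overline{T}$, I would first record that $\overline{T}$ restricts to $T$ on $A$, so in particular $\ker\overline{T}\cap A=\ker T$. For $\ker\overline{T}=\overline{\ker T}$ I then invoke the first identity twice. If $m\in M(A)$ satisfies $\overline{T}(m)=0$, then for each $a\in A$ multiplicativity of $\overline{T}$ gives $T(ma)=\overline{T}(m)\overline{T}(a)=0$, hence $ma\in\ker T$; thus $mA\subseteq\ker T$ and so $m\in\overline{\ker T}$ by the first identity. Conversely, if $m\in\overline{\ker T}$, the bounded net $m\mu_\lambda\in\ker T$ constructed above converges strictly to $m$, and strict continuity of $\overline{T}$ along this bounded net gives $\overline{T}(m)=\lim\overline{T}(m\mu_\lambda)=\lim T(m\mu_\lambda)=0$. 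Finally, $(\ker\overline{T})^\bot=\overline{(\ker T)^\bot}$ is obtained by applying the already-proven annihilator identity with $I=\ker T$, using that $\overline{I}=\ker\overline{T}$.

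The routine content is the approximate-unit bookkeeping; the one point demanding genuine care is the interplay between the strict topology and continuity, namely that strict continuity (of left multiplication by a fixed multiplier and of $\overline{T}$) is being applied only along bounded nets. This is precisely why the approximating net $b\mu_\lambda$, rather than an arbitrary strictly convergent net, is the right device throughout: it is norm-bounded by $\|b\|$ and lands in the relevant ideal, so every limiting argument stays inside the range where strict continuity is guaranteed.
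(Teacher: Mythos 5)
Your proposal is correct and follows essentially the same route as the paper's proof: both hinge on first establishing $\overline{I}=\{b\in M(A):bA\subseteq I\}$ via the approximate-unit net $b\mu_\lambda$, and then deducing the annihilator identity and the two kernel identities from it. The only differences are cosmetic: the paper packages the annihilator identity as a chain of equivalences rather than two double inclusions, and your bounded-net bookkeeping, while harmless, is not actually needed, since multiplication by a fixed multiplier is strictly continuous without any boundedness restriction and $\overline{T}$ is strictly continuous by the paper's very definition of extendibility.
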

\begin{proof} 
Clearly $\{b \in M(A): bA \subseteq I\}$ is strictly closed and contains $I$, whence $\overline{I}\subseteq \{b \in M(A): bA \subseteq I\}$.  Moreover, if $b \in M(A)$ is such that $bA \subseteq I$  and $\{\mu_\lambda\}$ is an approximate unit in $A$, then $\{b\mu_\lambda\}\subseteq I$ is a net  converging strictly  to $b$. Therefore  $\overline{I}=\{b \in M(A): bA \subseteq I\}$. Now let $b \in M(A)$. Using what we have just proved  we get
\begin{align*}
b \in \overline{I}^\bot &\Longleftrightarrow b\overline{I} =\{0\} \Longleftrightarrow b\overline{I} A=\{0\} \Longleftrightarrow b I=\{0\}  \Longleftrightarrow bA I=\{0\}  \Longleftrightarrow bA \subseteq I^\bot
\\
&   \Longleftrightarrow b\in \overline{I^\bot}.
\end{align*}
This shows the first part of the assertion. For the second part note that  we have $\overline{\ker T} \subseteq \ker\overline{T}$, and the reverse inclusion follows because
$$
b\in \ker\overline{T} \Longrightarrow T(bA)= \overline{T}(b)T(A)=0   \Longrightarrow bA \subseteq \ker T  \Longrightarrow b \in \overline{\ker T} .
$$
Now we get $(\ker \overline{T})^\bot =(\overline{\ker T})^\bot=\overline{(\ker T)^\bot}$.
\end{proof}
If $I$ is an ideal in $A$ we denote by $q_I:A\to A/ I$ the quotient map, and occasionally when the context is clear we suppress the subscript $I$ and write simply $q$. 
 We extend the standard notation concerning the sum of ideals in $C^*$-algebras, and if $D_0,D_2,..,D_n,...$ are linear subspaces  in a $C^*$-algebra we put
$$
\sum_{k=0}^n D_k:=\{\sum_{k=0}^{n}d_k: d_k \in D_k\}\quad \textrm{ and }\quad\sum_{k=0}^\infty D_k:=\clsp\{ d_k: d_k \in D_k, k \in \N\}.
$$
Hence $\sum_{k=0}^\infty D_k$ is the closure of the increasing sum of the spaces $\sum_{k=0}^n D_k$, $n\in \N$. 
 Similarly, we put  $CD:=\{cd: c\in C, b\in D\}$ whenever an action of $C$ on $D$ makes sense.
By  Cohen-Hewitt factorization theorem  a homomorphism $T:A\to B$ between $C^*$-algebras $A$ and $B$ is non-degenerate if and only if $T(A)B=B$.

\section{The category of $C^*$-dynamical systems}\label{endomorphism section}

We start by introducing the category of $C^*$-dynamical systems which  we will work with.
\begin{defn}
A \emph{$C^*$-dynamical system} is a pair $(A,\alpha)$ where $A$ is a $C^*$-algebra and $\alpha:A\to A$ is an extendible endomorphism.
A \emph{morphism} from a $C^*$-dynamical system $(A,\alpha)$ to a $C^*$-dynamical system $(B,\beta)$ is a non-degenerate homomorphism $T:A\to B$ such that $T\circ \alpha=\beta \circ T$. We will signalize this by writing $(A,\alpha)\stackrel{T}{\rightarrow}(B,\beta)$. The arising category will be denoted by \dyna.
\end{defn}
Passage to strictly continuous extensions yields a  functor from  $\dyna$   onto its full  subcategory whose objects are endomorphisms of unital $C^*$-algebras. 
\begin{lem}
If $(A,\alpha)\stackrel{T}{\rightarrow}(B,\beta)$ then $(M(A),\overline{\alpha})\stackrel{\overline{T}}{\rightarrow}(M(B),\overline{\beta})$.
\end{lem}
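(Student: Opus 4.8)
The plan is to reduce everything to a single density principle: two strictly continuous maps $M(A)\to M(B)$ that agree on the strictly dense subalgebra $A$ must coincide. The hypotheses hand us precisely the extensions we need, so the only genuine content is checking that $\overline{T}$ is a unital homomorphism and that the intertwining relation survives passage to multipliers.

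First I would record that all three relevant maps are extendible. Since $(A,\alpha)$ and $(B,\beta)$ are $C^*$-dynamical systems, $\alpha$ and $\beta$ extend by definition to strictly continuous endomorphisms $\overline{\alpha}:M(A)\to M(A)$ and $\overline{\beta}:M(B)\to M(B)$. Because $T$ is a morphism it is non-degenerate, hence extendible, so that $\overline{T}:M(A)\to M(B)$ exists and is strictly continuous; as the strictly continuous extension of a homomorphism it is again a homomorphism. To see that $\overline{T}$ is non-degenerate, i.e.\ unital, pick an approximate unit $\{\mu_\lambda\}$ in $A$. Non-degeneracy of $T$ means $T(\mu_\lambda)\to 1$ strictly in $M(B)$, so that for every $b\in B$ the defining formula for $\overline{T}$ gives $\overline{T}(1)b=\lim_\lambda T(\mu_\lambda)b=b$, whence $\overline{T}(1)=1$. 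Thus $\overline{T}$ qualifies as a morphism of the (unital) systems once the intertwining is established.

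The key step is the identity $\overline{T}\circ\overline{\alpha}=\overline{\beta}\circ\overline{T}$. Both sides are compositions of strictly continuous maps $M(A)\to M(B)$, hence are themselves strictly continuous. I would then verify that they agree on $A$: for $a\in A$, using $\overline{\alpha}|_A=\alpha$, $\overline{T}|_A=T$, $\overline{\beta}|_B=\beta$ together with the assumed relation $T\circ\alpha=\beta\circ T$,
$$
\overline{T}(\overline{\alpha}(a))=T(\alpha(a))=\beta(T(a))=\overline{\beta}(\overline{T}(a)).
$$
Since $A$ is strictly dense in $M(A)$ and both maps are strictly continuous, they coincide on all of $M(A)$, which is exactly the desired relation.

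The main obstacle, such as it is, lies not in any computation but in making the density argument watertight: one must be sure that $\overline{T}\circ\overline{\alpha}$ and $\overline{\beta}\circ\overline{T}$ are genuinely strictly continuous, which follows because each factor carries a strictly convergent net to a strictly convergent net, and that the standing convention identifying $A$ with its strictly dense image in $M(A)$ is applied consistently. Granting that, the uniqueness of strictly continuous extensions closes the argument with no further estimates.
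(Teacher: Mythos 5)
Your proof is correct and follows exactly the paper's own argument: the paper's proof is the one-line remark that the claim is ``clear by strict density of $A$ in $M(A)$ and strict continuity of $\overline{\alpha}$, $\overline{\beta}$ and $\overline{T}$,'' which is precisely the density principle you spell out. Your additional checks (extendibility of $T$ from non-degeneracy, unitality of $\overline{T}$, strict continuity of the two compositions) are just the details the paper leaves implicit.
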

\begin{proof}
Clear by strict density of $A$ in $M(A)$ and strict continuity of $\overline{\alpha}$, $\overline{\beta}$ and  $\overline{T}$.
\end{proof}
\subsection{Reversible $C^*$-dynamical system }\label{subsection reversible systems}
We distinguish certain objects in $\dyna$ which will view as  reversible systems. In the commutative case they correspond to partial homeomorphisms, see \cite{maxid}, \cite{kwa-logist},  and subsection \ref{Commutative $C^*$-dynamical systems} below. In noncommutative setting  the role of an inverse to an endomorphism plays  a complete transfer operator. 

Transfer operators  \cite{exel2} and  complete transfer operators \cite{Ant-Bakht-Leb} were originally introduced for unital $C^*$-algebras. Nevertheless, they can be easily adapted to our context. Let  $(A,\alpha)$ be a $C^*$-dynamical system. A {\em transfer
operator} for $(A,\alpha)$    is a positive linear map $L:A\to A$ 
 such that
\begin{equation}
L(\alpha(a)b) =aL(b),\quad \textrm{ for all }a,b\in A.
\label{b,,2}
\end{equation}
The transfer operator $L$ is  a \emph{complete transfer operator} if it satisfies 
\begin{equation}\label{definition of complete}
\alpha(L(a))=\overline{\alpha}(1)a \overline{\alpha}(1), \quad\textrm{ for all } a\in A.
\end{equation}
The following statement could be deduced from the results of \cite{kwa-exel}, cf. also \cite{kwa-trans}. For the sake of completeness we give an independent  proof. 

\begin{prop}\label{Proposition on transfers}
A $C^*$-dynamical system  $(A,\alpha)$ admits a complete transfer operator if and only if the range $\alpha(A)$ is a hereditary subalgebra of $A$ (hence a corner)  and the kernel $\ker\alpha$ is a complemented ideal in $A$.  

Moreover, if it exists, the complete transfer operator for $(A,\alpha)$ is unique and  is given by the formula
\begin{equation}\label{complete transfer operator form}
\alpha_*(a)=\alpha^{-1}(\overline{\alpha}(1)a\overline{\alpha}(1)), \qquad a\in A, 
\end{equation}
where $\alpha^{-1}$ is the inverse to the isomorphism $\alpha: (\ker\alpha)^\bot \to \alpha(A)=\overline{\alpha}(1)A\overline{\alpha}(1)$.
\end{prop}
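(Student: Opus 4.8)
The plan is to fix once and for all the projection $p:=\overline{\alpha}(1)\in M(A)$ (it is a projection, being the image of the projection $1$ under the $*$-homomorphism $\overline{\alpha}$) and to record at the outset the facts I will use throughout: $\alpha(A)\subseteq pAp$, and, for any approximate unit $\{\mu_\lambda\}$ of $A$, the net $\{\alpha(\mu_\lambda)\}$ is an approximate unit for $\alpha(A)$ converging strictly to $p$. These are consequences of extendibility, since $\alpha(a)=\lim\alpha(a\mu_\lambda)=\alpha(a)\overline{\alpha}(1)=p\alpha(a)$, so that $\alpha(a)=p\alpha(a)p$.

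For the implication ``conditions $\Rightarrow$ existence'' I would first upgrade ``hereditary'' to ``corner'', i.e. prove $\alpha(A)=pAp$. The inclusion $\subseteq$ is already noted; for $\supseteq$ I take $x=pxp\in pAp$ and show $\alpha(\mu_\lambda)\,x\,\alpha(\mu_\lambda)\to x$ in norm, using strict convergence of $\alpha(\mu_\lambda)$ to $p$ together with $x\in A$. Each term lies in $\alpha(A)\,A\,\alpha(A)\subseteq\alpha(A)$ by heredity, and $\alpha(A)$ is closed, so $x\in\alpha(A)$. With complementation $A=\ker\alpha\oplus(\ker\alpha)^\bot$ in hand, $\alpha$ annihilates $\ker\alpha$ and is injective on $(\ker\alpha)^\bot$ (as $\ker\alpha\cap(\ker\alpha)^\bot=\{0\}$), hence restricts to a $*$-isomorphism $(\ker\alpha)^\bot\to\alpha(A)=pAp$. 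I then define $\alpha_*$ by the stated formula and verify the three properties: positivity and linearity are immediate (compression $a\mapsto pap$ followed by the $*$-isomorphism $\alpha^{-1}$); the complete-transfer identity is just $\alpha\circ\alpha^{-1}=\mathrm{id}$ on $pAp$; and the transfer identity follows from multiplicativity of $\alpha^{-1}$ together with the key relation $\ker\alpha\cdot(\ker\alpha)^\bot=\{0\}$, which lets me discard the $\ker\alpha$-component of $a$.

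The converse, and the step I expect to be the main obstacle, is deriving the two structural conditions from an abstract complete transfer operator $L$. The corner condition is quick: completeness gives $\alpha(L(a))=pap$, whence $pAp=\alpha(L(A))\subseteq\alpha(A)\subseteq pAp$, so $\alpha(A)=pAp$. Complementation is the delicate point. I would introduce $E:=L\circ\alpha$, note it is bounded (positive linear maps between $C^*$-algebras are bounded and hermitian), and compute $\alpha(E(a))=p\alpha(a)p=\alpha(a)$ from completeness. This identity yields $\alpha\circ E=\alpha$, hence $E(a)-a\in\ker\alpha$ for all $a$, and, on applying $L$, also $E\circ E=E$. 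The bounded idempotent $E$ splits $A=\ker E\oplus\operatorname{Im}E$ topologically, so it remains to identify the summands. The equality $\ker E=\ker\alpha$ is immediate from $E(a)-a\in\ker\alpha$. For $\operatorname{Im}E=(\ker\alpha)^\bot$ I take adjoints in the transfer identity to obtain, alongside $d\,L(c)=L(\alpha(d)c)$, its companion $L(c)\,d=L(c\,\alpha(d))$, and use both to show that each $L(\alpha(a))$ annihilates $\ker\alpha$ on both sides, giving $\operatorname{Im}E\subseteq(\ker\alpha)^\bot$; the reverse inclusion follows because for $a\in(\ker\alpha)^\bot$ the element $E(a)-a$ lies simultaneously in $\ker\alpha$ and in $(\ker\alpha)^\bot$, hence vanishes.

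Finally, uniqueness (which also re-derives the formula) follows from the same companion identities: for any complete transfer operator $L$ and any $k\in\ker\alpha$ one gets $L(a)k=L(a\,\alpha(k))=0$ and $kL(a)=L(\alpha(k)a)=0$, so $L(a)\in(\ker\alpha)^\bot$; since $\alpha(L(a))=pap=\alpha(\alpha_*(a))$ with both $L(a)$ and $\alpha_*(a)$ in $(\ker\alpha)^\bot$, injectivity of $\alpha$ there forces $L=\alpha_*$. The only genuinely technical ingredients are the strict-convergence manipulations in the corner lemma and the bookkeeping with the two transfer identities; the conceptual heart is the idempotent $E=L\circ\alpha$ and the two-sided annihilator computations that pin its range exactly to $(\ker\alpha)^\bot$.
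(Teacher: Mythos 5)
Your proof is correct and follows essentially the same route as the paper's: both directions hinge on the idempotent $E=L\circ\alpha$, whose kernel and image are identified with $\ker\alpha$ and $(\ker\alpha)^\bot$ (via the transfer identity for one inclusion and the intersection argument $\ker\alpha\cap(\ker\alpha)^\bot=\{0\}$ for the other), with the formula and uniqueness then forced by injectivity of $\alpha$ on $(\ker\alpha)^\bot$. The only differences are cosmetic: you spell out the hereditary-implies-corner approximate-unit argument that the paper asserts without proof, and you conclude complementation from the Banach-space splitting of the bounded idempotent $E$, where the paper instead notes that $E$ is multiplicative.
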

\begin{proof}
Suppose that $\alpha_*$ is a complete transfer operator for $(A,\alpha)$. It follows from \eqref{definition of complete} that $\alpha(A)=\overline{\alpha}(1)A \overline{\alpha}(1)$  and  $\alpha \circ \alpha_*\circ \alpha=\alpha$. Hence $\alpha(A)$ is a hereditary subalgebra of $A$ and the map $p:=\alpha_* \circ\alpha$ is an idempotent acting on $A$.  Relations $\alpha=\alpha\circ p$ and $p=\alpha_* \circ\alpha$ imply that $\ker p=\ker\alpha$. Moreover,  \eqref{b,,2} implies that $\alpha_*(A) \subseteq (\ker\alpha)^\bot$ and hence $p(A) \subseteq (\ker\alpha)^\bot$. Since $p$ is an idempotent this shows that   $p(A)=(\ker\alpha)^\bot$.  As a consequence we also get $\alpha_*(A)=(\ker\alpha)^\bot$. 
To prove that $\ker\alpha$ is a complemented ideal it suffices to show that $p$ is multiplicative. Since $\alpha$ is injective on  $p(A)=(\ker\alpha)^\bot$, this follows from the calculation
$$
\alpha\left(p(a \cdot b)\right)=\alpha(a \cdot b)=\alpha(a)\alpha(b)= \alpha(p(a)) \alpha(p(b))= \alpha\left(p(a)\cdot p(b)\right)
$$
 where $a,b \in A$.  In particular, $\alpha: (\ker\alpha)^\bot \to \overline{\alpha}(1)A\overline{\alpha}(1)$ is an isomorphism. Taking into account \eqref{definition of complete} and $\alpha_*(A)=(\ker\alpha)^\bot$ we get \eqref{complete transfer operator form}.

Conversely, if $\alpha(A)$ is hereditary in $A$ it is automatically the corner $\overline{\alpha}(1)A \overline{\alpha}(1)$. If additionally the ideal $\ker\alpha$ is complemented then   $\alpha: (\ker\alpha)^\bot \to \overline{\alpha}(1)A\overline{\alpha}(1)$ is an isomorphism. In this case one easily checks that \eqref{complete transfer operator form} defines a complete transfer operator.
\end{proof}
\begin{defn} If a  $C^*$-dynamical system  $(A,\al)$ possess the (necessarily unique) complete transfer operator we  denote it by  $\al_*$ and we call $(A,\al)$  a \emph{reversible $C^*$-dynamical system}. 
\end{defn}
\begin{rem}
It readily follows from  \eqref{complete transfer operator form} that the  complete transfer operator $\alpha_*$ is a generalized inverse to $\alpha$, that is we have  $\alpha=\alpha\circ \alpha_*\circ \alpha$ and $\alpha_*=\alpha_*\circ \alpha\circ \alpha_*$.
Moreover, one can show,  cf. \cite{kwa-exel}, that if the complete transfer operator exists it is the unique transfer operator satisfying $\alpha=\alpha\circ \alpha_*\circ \alpha$.
\end{rem}
A complete transfer operator always admits a strictly continuous extension and in particular is a transfer operator in the sense of \cite{brv}, cf.     \cite{kwa-exel}. 
\begin{prop}\label{lemma for extensions of transfers}
A  $C^*$-dynamical system $(A,\alpha)$ is  reversible if and only if the  extended system $(M(A),\overline{\alpha})$ is reversible.

If $(A,\alpha)$ is reversible, the complete transfer operator $\overline{\alpha}_*$ for $(M(A),\overline{\alpha})$ is a strict continuous extension of $\alpha_*$, and  $\{\overline{\alpha}_*^n(1)\}_{n\in \N}$ is  a decreasing   sequence of central projections in $M(A)$ with $\overline{\alpha}_*^n(1)A=\ker(\alpha^n)^\bot$. 
\end{prop}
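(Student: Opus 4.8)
The plan is to reduce everything to the characterisation of reversibility in Proposition \ref{Proposition on transfers}, namely that a system is reversible exactly when the range of its endomorphism is a corner and the kernel is a complemented ideal, and to pass between $A$ and $M(A)$ using Lemma \ref{diamonds lemma} together with the fact (recalled just before the statement, cf. \cite{kwa-exel}, \cite{brv}) that a complete transfer operator is extendible.

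First I would prove the forward implication together with the ``moreover'' part in one stroke. Assuming $(A,\alpha)$ reversible, let $\overline{\alpha_*}$ denote the strictly continuous extension of $\alpha_*$. Both sides of the transfer identity \eqref{b,,2} and of the completeness identity \eqref{definition of complete} (now read for $\overline{\alpha}$, whose value at the unit is the projection $q:=\overline{\alpha}(1)$) are separately strictly continuous in their arguments and agree on the strictly dense subalgebra $A$; hence they agree on all of $M(A)$. Positivity of $\overline{\alpha_*}$ persists as a strict limit of positive elements. Thus $\overline{\alpha_*}$ is a complete transfer operator for $(M(A),\overline{\alpha})$, so the latter is reversible, and by the uniqueness part of Proposition \ref{Proposition on transfers} we get $\overline{\alpha}_*=\overline{\alpha_*}$, which is exactly the asserted extension property. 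Note that the completeness relation for $\overline{\alpha}$ reads $\overline{\alpha}(\overline{\alpha}_*(m))=qmq$, so it secretly contains the inclusion $qM(A)q\subseteq\overline{\alpha}(M(A))$; this is precisely the point at which extendibility of $\alpha_*$ is indispensable, and I expect it to be the main obstacle.

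For the converse I would use the characterisation directly. If $(M(A),\overline{\alpha})$ is reversible then $\ker\overline{\alpha}$ is a complemented ideal of the unital algebra $M(A)$, hence $\ker\overline{\alpha}=eM(A)$ for a central projection $e$; since $\ker\alpha=\ker\overline{\alpha}\cap A=eA$ with $(\ker\alpha)^\bot=(1-e)A$, the kernel $\ker\alpha$ is complemented. It remains to see that $\alpha(A)$ is hereditary in $A$. Here $\overline{\alpha}(M(A))=qM(A)q$ is a corner, and writing $\phi:=\overline{\alpha}|_{(1-e)M(A)}$ for the isomorphism onto $qM(A)q$ we have $\alpha(A)=\overline{\alpha}((1-e)A)=\phi((1-e)A)$. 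Since $(1-e)A$ is an ideal of $(1-e)M(A)$, its image $\alpha(A)$ is an ideal of $qM(A)q$; intersecting with the hereditary subalgebra $qAq=qM(A)q\cap A$ and using $\alpha(A)\subseteq qAq$ shows that $\alpha(A)$ is an ideal, hence a hereditary subalgebra, of $qAq$, and therefore hereditary in $A$ by transitivity of heredity. By Proposition \ref{Proposition on transfers}, $(A,\alpha)$ is reversible.

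Finally, for the projections I would first treat $n=1$: from \eqref{complete transfer operator form} applied to $\overline{\alpha}$ one computes $\overline{\alpha}_*(1)=\overline{\alpha}^{-1}(q)=1-e$, a central projection, with $\overline{\alpha}_*(1)A=(1-e)A=(\ker\alpha)^\bot$. For general $n$ I would note that $(A,\alpha^n)$ is again reversible with $(\alpha^n)_*=\alpha_*^n$ (a routine check that the composite of complete transfer operators is the complete transfer operator of the composite, using that $\overline{\alpha}^n(M(A))=q_nM(A)q_n$ with $q_n:=\overline{\alpha}^n(1)\le q_{n-1}$), and that strict extension commutes with composition, so $\overline{\alpha}_*^{\,n}=\overline{(\alpha^n)_*}$. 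Applying the $n=1$ computation to $\alpha^n$ gives that $\overline{\alpha}_*^{\,n}(1)=1-e_n$ is a central projection, where $\ker\overline{\alpha^n}=e_nM(A)$; combining $(\ker\overline{\alpha^n})^\bot=\overline{(\ker\alpha^n)^\bot}$ from Lemma \ref{diamonds lemma} with $(1-e_n)M(A)\cap A=(1-e_n)A$ yields $\overline{\alpha}_*^{\,n}(1)A=(\ker\alpha^n)^\bot$. Monotonicity is then immediate: $\ker\alpha^n$ increases with $n$, so the $e_n$ increase and the projections $\overline{\alpha}_*^{\,n}(1)=1-e_n$ decrease.
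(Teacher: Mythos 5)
Your forward implication has a genuine gap at its crux. You take as input that $\alpha_*$ admits a strictly continuous extension, citing the remark preceding the proposition; but the existence of such an extension is itself part of what the proposition asserts (it is exactly the content of the ``moreover'' clause, given the forward implication), and the paper's own proof assumes nothing of the kind --- the citations to \cite{brv}, \cite{kwa-exel} there are context, not an input the argument leans on. You even identify the crux correctly yourself: the completeness identity on $M(A)$ hides the inclusion $\overline{\alpha}(1)M(A)\overline{\alpha}(1)\subseteq\overline{\alpha}(M(A))$, and extendibility of $\alpha_*$ is precisely what buys it --- and then you defer that point rather than prove it. So, as written, your forward direction establishes only the routine density/uniqueness bookkeeping and outsources (or, read against the paper's self-contained development, presupposes) the one non-routine step. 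The repair is short and is in essence the paper's argument: for a net $a_\lambda\to a$ strictly in $M(A)$, the transfer identity \eqref{b,,2} and its adjoint form give $\alpha_*(a_\lambda)b=\alpha_*(a_\lambda\alpha(b))\to\alpha_*(a\alpha(b))$ and $b\,\alpha_*(a_\lambda)=\alpha_*(\alpha(b)a_\lambda)\to\alpha_*(\alpha(b)a)$ in norm for every $b\in A$, so $\alpha_*(a_\lambda)$ converges strictly to some $m\in M(A)$; strict continuity of $\overline{\alpha}$ and \eqref{definition of complete} then yield $\overline{\alpha}(m)=\lim\alpha(\alpha_*(a_\lambda))=\lim\overline{\alpha}(1)a_\lambda\overline{\alpha}(1)=\overline{\alpha}(1)a\overline{\alpha}(1)$. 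Hence $\overline{\alpha}(M(A))=\overline{\alpha}(1)M(A)\overline{\alpha}(1)$ is hereditary; combined with the fact that $\ker\overline{\alpha}=\overline{\ker\alpha}$ is complemented whenever $\ker\alpha$ is (Lemma \ref{diamonds lemma}), $(M(A),\overline{\alpha})$ is reversible by Proposition \ref{Proposition on transfers}, and strict continuity of $\overline{\alpha}_*$ together with $\overline{\alpha}_*|_A=\alpha_*$ falls out of \eqref{complete transfer operator form} --- no a priori extendibility of $\alpha_*$ is needed anywhere.

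The rest of your argument is sound, and your converse direction is a legitimate variant of the paper's: where the paper identifies $\alpha(A)=\overline{\alpha}(\overline{\alpha}_*(A))=\overline{\alpha}(1)A\overline{\alpha}(1)$ outright from the completeness identity, you show only that $\alpha(A)=\overline{\alpha}\bigl((1-e)A\bigr)$ is a closed ideal of the corner $\overline{\alpha}(1)M(A)\overline{\alpha}(1)$ lying inside $A$, hence hereditary in $A$ by transitivity; this weaker conclusion suffices for Proposition \ref{Proposition on transfers} and neatly sidesteps the equality (which in any case follows afterwards, since a hereditary range automatically equals the corner). Your handling of the projections --- reducing to the reversible system $(A,\alpha^n)$ via $(\alpha^n)_*=\alpha_*^n$, identifying $\overline{\alpha}_*^n(1)=1-e_n$ where $\ker\overline{\alpha}^n=e_nM(A)$, and reading off monotonicity from the increasing kernels --- coincides with the paper's reasoning and is correct.
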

\begin{proof} 
By Lemma \ref{diamonds lemma},  $\ker\overline{\alpha}=\overline{\ker\alpha}$. Thus $\ker\alpha$ is complemented in $A$ if and only if $\ker\overline{\alpha}$ is complemented in $M(A)$.
Now, if $(M(A),\overline{\alpha})$ is reversible then   $\alpha(A)=\overline{\alpha}(A)=\overline{\alpha}(\overline{\alpha}_*(A))=\overline{\alpha}(1)A\overline{\alpha}(1)$ is hereditary in $A$, and hence $(A,\alpha)$ is reversible by Proposition \ref{Proposition on transfers}.
  Conversely, if  $(A,\alpha)$ is  reversible then  for any net $\{a_\lambda\} \subseteq A$ strictly convergent to $a\in M(A)$ the net $\{\overline{\alpha}(1) a_\lambda \overline{\alpha}(1)\}=\{\alpha(\alpha_*(a_\lambda))\}\subseteq \overline{\alpha}(M(A))$ is convergent to $\overline{\alpha}(1)a \overline{\alpha}(1)$.  It follows that  $\overline{\alpha}(M(A))=\overline{\alpha}(1)M(A)\overline{\alpha}(1)$   is hereditary  in $M(A)$ and hence $(M(A),\overline{\alpha})$ is reversible by Proposition \ref{Proposition on transfers}. 

Suppose now that the systems $(A,\alpha)$  and   $(M(A),\overline{\alpha})$ are reversible. Then  $\overline{\alpha}_*$ is a strictly continuous extension of $\alpha_*$ because it is given by   \eqref{complete transfer operator form} where $\alpha^{-1}$ is replaced by the the inverse to the strictly continuous isomorphism $\overline{\alpha}: \overline{(\ker\alpha)^\bot}=(\ker\overline{\alpha})^\bot \to \overline{\alpha(A)}=\overline{\alpha}(1)M(A)\overline{\alpha}(1)$.
Furthermore,  for each $n\in \N$, $\alpha_*^n$ is the  complete transfer operator for  $(A,\alpha^n)$.  In particular,  $\overline{\alpha}_*^n(1)$ is a projection onto $\ker(\overline{\alpha}^n)^\bot$.
\end{proof}
\subsection{Extensions and covariant morphisms}
\begin{defn} If $T$ is an injective morphism   $(A,\alpha)\stackrel{T}{\rightarrow}(B,\beta)$ we call it an  \emph{embedding} of $(A,\al)$ into $(B,\beta)$, and say  that  $(B,\beta)$ is an \emph{extension} of  $(A,\al)$. If  $T$ is an isomorphism then $(A,\al)$ and $(B,\beta)$ are \emph{equivalent} (in the category $\dyna$).  
\end{defn}
Note that if $T$ is an embedding  of $(A,\al)$ to $(B,\beta)$ then 
$$
\ker\al =T^{-1}(\ker\beta), \qquad (\ker\al)^\bot \supseteq T^{-1}((\ker\beta)^\bot).
$$
 These relations could be interpreted as follows: the extended endomorphism $\beta$ may enlarge the kernel of  $\al$ but only outside $A$, and if it does, it may shrink the annihilator of the kernel inside $A$. Thus  the ideal $J=T^{-1}((\ker\beta)^\bot)$  is a parameter 
 that measures how far can $\ker\beta$ from $\ker\al$ go. We have $\{0\}\subseteq J \subseteq (\ker\al)^\bot$ and  when $J=(\ker\al)^\bot$ the relationship between the kernels is  the strongest. This leads us to the following definition.

\begin{defn}\label{12345}
Let   $J$ be an ideal in $(\ker\al)^\bot$. We will say that $(B,\beta)$ is a \emph{$J$-covariant  extension of $(A,\al)$}, or briefly a  $J$-\emph{extension of} $(A,\al)$, if  there exits an embedding $(A,\alpha)\stackrel{T}{\rightarrow}(B,\beta)$ such that
\begin{equation}\label{J-covariance equality}
J= T^{-1}( (\ker\beta)^\bot).
\end{equation}
In this event  $T$ will be called a \emph{$J$-covariant embedding}, or briefly \emph{$J$-embedding}, of  $(A,\al)$ into $(B,\beta)$. If  $J=(\ker\al)^\bot$  instead of $J$-covariant extension and $J$-covariant embedding we will  use    terms  \emph{covariant extension} and \emph{covariant embedding}. 
\end{defn}
It is useful to extend the above notion of covariance to (not necessarily injective) morphisms  $(A,\alpha)\stackrel{T}{\rightarrow}(B,\beta)$. But, in general, since $\ker T\subseteq T^{-1}( (\ker\beta)^\bot)$   one can not expect  to have equality \eqref{J-covariance equality} for an ideal $J$ in $(\ker\al)^\bot$. One of possible  ways  to circumvent this problem is to replace \eqref{J-covariance equality}  by an inclusion.

\begin{defn}\label{general def of covariance} Let   $J$ be an ideal in $(\ker\al)^\bot$. We say that a  morphism   $(A,\alpha)\stackrel{T}{\rightarrow}(B,\beta)$  is  \emph{covariant on $J$} if 
$$
J\subseteq T^{-1}( (\ker\beta)^\bot).
$$
We say  $T$ is \emph{covariant} if it is covariant on $(\ker\alpha)^\bot$.
\end{defn}
\begin{rem}
An embedding $T$ is $J$-covariant if and only if  $J$ is the largest ideal on which $T$ is covariant as a morphism. In particular, $T$ is a covariant embedding if and only if  $T$ is an injective covariant morphism. 
\end{rem}

We  show now certain properties of extensions and morphisms we will  use latter. They already indicate a special role played by covariant morphisms.
\begin{lem}\label{covariance for extensions}
If $(A,\alpha)\stackrel{T}{\rightarrow}(B,\beta)$ is morphism, then
\begin{equation}\label{strict closures and pre-images}
\overline{T^{-1}((\ker\beta)^\bot)}=\overline{T}^{-1}((\ker\overline{\beta})^\bot), 
\end{equation}
recall that the bar on the left-hand side denotes the strict closure.
In particular,
\begin{itemize}
\item[i)] $(A,\alpha)\stackrel{T}{\rightarrow}(B,\beta)$ is covariant on $J \subseteq (\ker\alpha)^\bot$  iff $(M(A),\overline{\alpha})\stackrel{\overline{T}}{\rightarrow}(M(B),\overline{\beta})$ is covariant on  $\overline{J} \subseteq (\ker\overline{\alpha})^\bot$,
\item[ii)] $(A,\alpha)\stackrel{T}{\rightarrow}(B,\beta)$ is  $J$-covariant embedding   iff $(M(A),\overline{\alpha})\stackrel{\overline{T}}{\rightarrow}(M(B),\overline{\beta})$ is  $\overline{J}$-covariant embedding,
\item[iii)]  $(A,\alpha)\stackrel{T}{\rightarrow}(B,\beta)$ is a covariant morphism iff  $(M(A),\overline{\alpha})\stackrel{\overline{T}}{\rightarrow}(M(B),\overline{\beta})$ is a covariant morphism.
\end{itemize}
\end{lem}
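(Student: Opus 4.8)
The plan is to establish the displayed identity \eqref{strict closures and pre-images} directly, and then to read off i)--iii) as formal consequences. Throughout I would lean on Lemma \ref{diamonds lemma} and on two elementary facts: strict closure is inclusion-preserving, and for every ideal $I$ in $A$ one has $\overline I\cap A=I$ (if $a\in A$ satisfies $aA\subseteq I$ then $a\mu_\lambda\to a$ with $a\mu_\lambda\in I$, whence $a\in I$).

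The crux is the pointwise relation
$$
\overline T(m)\,T(a)=T(ma),\qquad m\in M(A),\ a\in A,
$$
which is immediate from the defining formula $\overline T(m)b=\lim_\lambda T(m\mu_\lambda)b$: putting $b=T(a)$ gives $\overline T(m)T(a)=\lim_\lambda T(m\mu_\lambda a)=T(ma)$, since $\mu_\lambda a\to a$ and $m$ is a bounded multiplier. Writing $K:=(\ker\beta)^\bot$, the first part of Lemma \ref{diamonds lemma} shows that $m\in\overline{T^{-1}(K)}$ iff $mA\subseteq T^{-1}(K)$, i.e.\ iff $T(ma)\in K$ for all $a\in A$, which by the relation above means $\overline T(m)T(A)\subseteq K$; likewise $m\in\overline T^{-1}(\overline K)$ iff $\overline T(m)B\subseteq K$. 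Because $K$ is an ideal and $T$ is non-degenerate, Cohen--Hewitt gives $T(A)B=B$, so these two conditions coincide, proving $\overline{T^{-1}(K)}=\overline T^{-1}(\overline K)$. Applying the second part of Lemma \ref{diamonds lemma} to the extendible endomorphism $\beta$ gives $\overline K=\overline{(\ker\beta)^\bot}=(\ker\overline\beta)^\bot$, and \eqref{strict closures and pre-images} follows.

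For the three ``in particular'' claims I would argue as follows. Note first that $J\subseteq(\ker\alpha)^\bot$ forces $\overline J\subseteq\overline{(\ker\alpha)^\bot}=(\ker\overline\alpha)^\bot$, so $\overline J$ is an ideal in $(\ker\overline\alpha)^\bot$ as required. For i), covariance of $T$ on $J$ reads $J\subseteq T^{-1}(K)$; taking strict closures and invoking \eqref{strict closures and pre-images} gives $\overline J\subseteq\overline T^{-1}((\ker\overline\beta)^\bot)$, i.e.\ covariance of $\overline T$ on $\overline J$, while the converse follows by intersecting this inclusion with $A$ and using $\overline J\cap A=J$ together with $\overline{T^{-1}(K)}\cap A=T^{-1}(K)$. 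Claim iii) is just i) with $J=(\ker\alpha)^\bot$, since then $\overline J=(\ker\overline\alpha)^\bot$. For ii) the same strict-closure/intersect-with-$A$ pair of operations turns the defining equality $J=T^{-1}(K)$ of a $J$-embedding into $\overline J=\overline T^{-1}((\ker\overline\beta)^\bot)$ and back; it remains only to observe that injectivity is preserved in both directions, which holds because $\ker\overline T=\overline{\ker T}$ by Lemma \ref{diamonds lemma}, so $\ker T=\{0\}$ iff $\ker\overline T=\{0\}$.

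I expect the only genuinely delicate point to be the passage, inside the proof of \eqref{strict closures and pre-images}, between the two annihilation conditions $\overline T(m)T(A)\subseteq K$ and $\overline T(m)B\subseteq K$: this is precisely where non-degeneracy of the morphism $T$ (via $T(A)B=B$) is indispensable, and without it the two preimages need not agree. Everything else is bookkeeping with strict closures, the main care being to keep track of whether a given closure or annihilator is formed in $A$, $M(A)$, $B$, or $M(B)$.
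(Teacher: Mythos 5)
Your proposal is correct and takes essentially the same route as the paper: both arguments hinge on Lemma \ref{diamonds lemma} together with the relation $T(ma)=\overline{T}(m)T(a)$, and your equivalence chain reproduces the paper's two inclusions. The only cosmetic differences are that for the inclusion $\overline{T^{-1}((\ker\beta)^\bot)}\subseteq \overline{T}^{-1}((\ker\overline{\beta})^\bot)$ you invoke non-degeneracy via Cohen--Hewitt ($T(A)B=B$) where the paper just uses monotonicity of strict closure and strict continuity of $\overline{T}$, and that you write out the ``in particular'' items i)--iii) which the paper dismisses as straightforward.
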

\begin{proof}
By Lemma \ref{diamonds lemma} we have $(\ker\overline{\beta})^\bot=\overline{(\ker\beta)^\bot}$. Hence  
$$T^{-1}((\ker\beta)^\bot) \subseteq \overline{T}^{-1}((\overline{\ker\beta)^\bot}) \,\,\Longrightarrow  \,\,\overline{T^{-1}((\ker\beta)^\bot)}\subseteq \overline{T}^{-1}( (\ker\overline{\beta})^\bot).
$$ 
For the reverse inclusion note that for any  $b \in  \overline{T}^{-1}( (\ker\overline{\beta})^\bot)$ we have 
$$
T(bA)=\overline{T}(b)T(A)\subseteq  (\ker\overline{\beta})^\bot \cap B=\overline{(\ker\beta)^\bot} \cap B=(\ker\beta)^\bot.
$$
Hence $bA\subseteq T^{-1}((\ker\beta)^\bot)$ and therefore $b\in \overline{T^{-1}((\ker\beta)^\bot)}$ by Lemma \ref{diamonds lemma}. 
This proves \eqref{strict closures and pre-images}. The  second part of the assertion is now straightforward. 
\end{proof}

\begin{lem}\label{covariance in terms of projection}
Let  $(A,\alpha)\stackrel{T}{\rightarrow}(B,\beta)$ and suppose that  $\ker\alpha$ and $\ker \beta$ are complemented ideals in $A$ and $B$, respectively. The following conditions are equivalent:
\begin{itemize}
\item[i)] $T$ is covariant,
\item[ii)] $\overline{T}(1_{(\ker\alpha)^\bot})\leq 1_{(\ker\beta)^\bot}$,
\item[iii)] $\overline{T}(1_{(\ker\alpha)^\bot})=1_{(\ker\beta)^\bot}$,
\end{itemize}
where $1_{(\ker\alpha)^\bot}\in M(A)$ and $1_{(\ker\beta)^\bot}\in M(B)$ denote the projections onto $(\ker\alpha)^\bot$ and $(\ker\beta)^\bot$, respectively. \end{lem}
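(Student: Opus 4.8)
The plan is to pass to the multiplier algebras, where the ideals $(\ker\alpha)^\bot$ and $(\ker\beta)^\bot$ become the unital corners cut out by the central projections $p:=1_{(\ker\alpha)^\bot}$ and $q:=1_{(\ker\beta)^\bot}$, and to reformulate everything in terms of the single projection $p':=\overline{T}(p)$. First I would record the standing facts. Since $T$ is non-degenerate, $\overline{T}\colon M(A)\to M(B)$ is a unital $*$-homomorphism, so $p'$ is a genuine projection in $M(B)$; and by Lemma \ref{diamonds lemma} together with complementedness of the kernels we have $pM(A)=(\ker\overline{\alpha})^\bot=\overline{(\ker\alpha)^\bot}$ and $(1-p)M(A)=\ker\overline{\alpha}=\overline{\ker\alpha}$, with the analogous identities for $q$, $\beta$. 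I will also use repeatedly the elementary fact that for a central projection $e$ the corner $eM$ is unital with unit $e$, so that $x\in eM$ is the same as $ex=x$.

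For the equivalence (i)$\Leftrightarrow$(ii) I would invoke Lemma \ref{covariance for extensions}(iii) to replace $T$ by $\overline{T}$, so that covariance of $T$ means $\overline{T}\big(\overline{(\ker\alpha)^\bot}\big)\subseteq(\ker\overline{\beta})^\bot$, i.e. $p'\,\overline{T}(M(A))\subseteq qM(B)$. Evaluating at $1$ (recall $\overline{T}(1)=1$) shows this forces $p'=qp'$, hence $p'\le q$; conversely, centrality of $q$ makes $p'\le q$ sufficient since then $p'\overline{T}(m)=qp'\overline{T}(m)\in qM(B)$. Thus (i) is equivalent to $\overline{T}(p)\le q$, which is exactly (ii).

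The crux is then the observation that the \emph{reverse} inequality $q\le p'$ holds for every morphism, so that (ii) automatically upgrades to the equality (iii). This comes straight from the intertwining relation $\overline{T}\circ\overline{\alpha}=\overline{\beta}\circ\overline{T}$: because $1-p\in\ker\overline{\alpha}$ we get $\overline{\beta}(1-p')=\overline{\beta}(\overline{T}(1-p))=\overline{T}(\overline{\alpha}(1-p))=0$, so $1-p'\in\ker\overline{\beta}=(1-q)M(B)$; as $1-q$ is the unit of that corner this yields $(1-q)(1-p')=1-p'$, i.e. $1-p'\le 1-q$, equivalently $q\le p'$. (Equivalently one may observe directly that $T(\ker\alpha)\subseteq\ker\beta$ and translate this inclusion to the projections.)

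Finally I would assemble the pieces: (iii)$\Rightarrow$(ii) is trivial, while (ii) combined with the automatic inequality $q\le p'$ gives $p'=q$, which is (iii); together with (i)$\Leftrightarrow$(ii) this establishes all three conditions equivalent. The only place demanding genuine care is the bookkeeping of the inequality directions — recognising that the truly restrictive half of covariance is $p'\le q$, whereas $q\le p'$ is free and encodes the fact that $\beta$ can only enlarge, never shrink, the kernel — and the routine but constant passage between ideal inclusions in $M(B)$ and order relations among central projections.
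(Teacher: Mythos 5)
Your proof is correct and follows essentially the same route as the paper: both pass to the unital (multiplier) setting via Lemma \ref{covariance for extensions} iii), translate the covariance inclusion into the projection inequality $\overline{T}(1_{(\ker\alpha)^\bot})\leq 1_{(\ker\beta)^\bot}$, and observe that the reverse inequality is automatic from the intertwining relation (the paper phrases this as $T(1_{\ker\alpha})\leq 1_{\ker\beta}$ plus unitality of $T$, which is exactly your complementary-projection computation $1-p'\in\ker\overline{\beta}$). The only difference is expository: you spell out the identifications $pM(A)=(\ker\overline{\alpha})^\bot$ and $(1-p)M(A)=\ker\overline{\alpha}$ that the paper leaves implicit.
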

\begin{proof}
 By passing to  extended systems, see  Lemma  \ref{covariance for extensions} iii), we may assume that both $A$ and $B$ are unital. Then $T$ is  unital   and   $1_{(\ker\alpha)^\bot}$ and $1_{(\ker\beta)^\bot}$ are units in  $(\ker\alpha)^\bot$ and $(\ker\beta)^\bot$, respectively. Now, one readily sees that the inclusion $(\ker\alpha)^\bot \subseteq T^{-1}((\ker\beta)^\bot)$ is equivalent to the inequality $T(1_{(\ker\alpha)^\bot})\leq 1_{(\ker\beta)^\bot}$. Hence i) is equivalent to ii). Moreover, $T\circ \alpha=\beta \circ T$ implies that $T(1_{\ker\alpha})\leq 1_{\ker\beta}$ where $1_{\ker\alpha}$ and $1_{\ker\beta}$ are units in $\ker\alpha$ and $\ker\beta$, respectively. Thus, as $T$ is unital,  the inequality $T(1_{(\ker\alpha)^\bot})\geq 1_{(\ker\beta)^\bot}$ is always satisfied. Hence ii) is equivalent to iii).
\end{proof}
In connection with the following statements  recall, see Proposition \ref{lemma for extensions of transfers}, that if $\alpha_*$ is a complete transfer operator for $(A,\alpha)$, then $\overline{\al}_*(1)$ is the unit in $(\ker\overline{\alpha})^\bot=\overline{(\ker\alpha)^\bot}$. 
 \begin{prop}\label{lemma for reversibles} Suppose   $(A,\al)$ and $(B,\beta)$ are reversible $C^*$-dynamical systems. For any  morphism $(A,\alpha)\stackrel{T}{\rightarrow}(B,\beta)$ the following conditions are equivalent:
 \begin{itemize}
 \item[i)]  $T$  is covariant,
 \item[ii)]  $\beta_*\circ T=T\circ \al_*$, 
 \item[iii)] $\overline{T}(\overline{\al}_*(1)) \leq \overline{\beta}_*(1)$,
 \item[iv)] $\overline{T}(\overline{\al}_*(1))=\overline{\beta}_*(1)$.
       \end{itemize}
      \end{prop}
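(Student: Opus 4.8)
The plan is to reduce i), iii), iv) to the already-proved Lemma \ref{covariance in terms of projection}, and then to weave in condition ii). Since $(A,\alpha)$ and $(B,\beta)$ are reversible, Proposition \ref{lemma for extensions of transfers} tells us that $\ker\alpha$ and $\ker\beta$ are complemented and, as recalled just before the statement, that $\overline{\alpha}_*(1)$ and $\overline{\beta}_*(1)$ are precisely the projections $1_{(\ker\alpha)^\bot}$ and $1_{(\ker\beta)^\bot}$ appearing in Lemma \ref{covariance in terms of projection}. Under these identifications conditions iii) and iv) of the present proposition become verbatim conditions ii) and iii) of that lemma, so i) $\Leftrightarrow$ iii) $\Leftrightarrow$ iv) follows at once. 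It remains to show i) $\Rightarrow$ ii) and ii) $\Rightarrow$ iv).

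For i) $\Rightarrow$ ii) I would work pointwise with the explicit formula \eqref{complete transfer operator form}. Fix $a\in A$. As $\alpha_*(a)\in(\ker\alpha)^\bot$ and covariance means exactly $T\bigl((\ker\alpha)^\bot\bigr)\subseteq(\ker\beta)^\bot$, the element $T(\alpha_*(a))$ belongs to $(\ker\beta)^\bot$. Using the morphism identity $\beta\circ T=T\circ\alpha$ and the defining relation \eqref{definition of complete} for $\alpha_*$, and then that $\overline{T}$ is a unital homomorphism with $\overline{T}(\overline{\alpha}(1))=\overline{\beta}(\overline{T}(1))=\overline{\beta}(1)$, I compute
\[
\beta\bigl(T(\alpha_*(a))\bigr)=T\bigl(\alpha(\alpha_*(a))\bigr)=T\bigl(\overline{\alpha}(1)\,a\,\overline{\alpha}(1)\bigr)=\overline{\beta}(1)\,T(a)\,\overline{\beta}(1).
\]
Thus $T(\alpha_*(a))$ lies in $(\ker\beta)^\bot$ and is sent by $\beta$ to $\overline{\beta}(1)T(a)\overline{\beta}(1)$; since $\beta$ is injective on $(\ker\beta)^\bot$ (Proposition \ref{Proposition on transfers}), formula \eqref{complete transfer operator form} applied to $\beta_*$ forces $T(\alpha_*(a))=\beta_*(T(a))$, which is ii).

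For ii) $\Rightarrow$ iv) I would simply pass to multiplier algebras. The complete transfer operators are extendible (Proposition \ref{lemma for extensions of transfers}), so the equality $\beta_*\circ T=T\circ\alpha_*$ on $A$ propagates by strict continuity to $\overline{\beta}_*\circ\overline{T}=\overline{T}\circ\overline{\alpha}_*$ on $M(A)$; evaluating at the unit and using $\overline{T}(1)=1$ (non-degeneracy of $T$) yields $\overline{\beta}_*(1)=\overline{T}(\overline{\alpha}_*(1))$, which is iv), closing the loop. The one step needing care is i) $\Rightarrow$ ii): covariance is used precisely to guarantee that $T(\alpha_*(a))$ stays inside $(\ker\beta)^\bot$, and it is the uniqueness part of Proposition \ref{Proposition on transfers}, namely injectivity of $\beta$ on $(\ker\beta)^\bot$, that upgrades the matched $\beta$-values into the desired equality.
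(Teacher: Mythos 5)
Your proof is correct and takes essentially the same route as the paper's: i) $\Leftrightarrow$ iii) $\Leftrightarrow$ iv) via Lemma \ref{covariance in terms of projection}, and i) $\Rightarrow$ ii) by the identical computation $\beta\bigl(T(\alpha_*(a))\bigr)=\overline{\beta}(1)T(a)\overline{\beta}(1)$ --- the paper concludes by applying $\beta_*$ and using that $\beta_*\circ\beta$ is the identity on $(\ker\beta)^\bot$, which is just another phrasing of your appeal to injectivity of $\beta$ on $(\ker\beta)^\bot$. The only cosmetic difference is that the paper declares ii) $\Rightarrow$ iv) ``immediate,'' whereas you spell out the strict-continuity argument behind it.
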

       \begin{proof} 
Equivalences i)$\Leftrightarrow$ iii) $\Leftrightarrow$ iv) follow from Lemma \ref{covariance in terms of projection}. The implication  ii)$\Rightarrow$ iv) is immediate. To close the cycle we show that  i)$\Rightarrow$ ii).  To this end recall that the mappings $A\ni a \mapsto (\alpha_*\circ\alpha)(a)=\overline{\alpha}_*(1)a \in (\ker\alpha)^\bot$ and $A\ni a \mapsto (\alpha\circ\alpha_*)(a)=\overline{\alpha}(1)a\overline{\alpha}(1) \in \overline{\alpha}(1)A\overline{\alpha}(1)$ are  projections. Similarly, for $(B,\beta)$. Now using covariance of $T$ we get $T(\al_*(a))\in (\ker\beta)^\bot$, for any $a\in A$. Therefore  
\begin{align*}
T(\alpha_*(a))&=\beta_*\big(\beta(T(\al_*(a)))\big)=\beta_*\big(T (\al(\al_*(a)))\big)=\beta_*\big(T(\overline{\al}(1)a\overline{\al}(1))\big)
\\
&=\beta_*(\overline{\beta}(1)T(a)\overline{\beta}(1))=\beta_*(T(a)). 
 \end{align*}  \end{proof}  
 \begin{lem}\label{lemma describing ideal J}
Suppose that $(A,\alpha)\stackrel{T}{\rightarrow}(B,\beta)$ is an embedding and  $(B,\beta)$ is a reversible $C^*$-dynamical system. Then $T$ is $J$-covariant where
$$
J=\{a\in A : \, \overline{\beta}_*(1) T(a)=T(a)\}.
$$
\end{lem}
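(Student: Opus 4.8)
The plan is to verify directly the two requirements of Definition \ref{12345}: that $J$ is an ideal contained in $(\ker\alpha)^\bot$, and that $J = T^{-1}((\ker\beta)^\bot)$. The starting point is to recognize what the operator $\overline{\beta}_*(1)$ does. Since $(B,\beta)$ is reversible, Proposition \ref{lemma for extensions of transfers} tells us that $\overline{\beta}_*(1)$ is a central projection in $M(B)$ with $\overline{\beta}_*(1)B = (\ker\beta)^\bot$; equivalently, $\overline{\beta}_*(1)$ is the unit $1_{(\ker\beta)^\bot}$ of the complemented ideal $(\ker\beta)^\bot$.

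First I would show $J = T^{-1}((\ker\beta)^\bot)$. For $a\in A$, if $\overline{\beta}_*(1)T(a) = T(a)$, then $T(a) = \overline{\beta}_*(1)T(a) \in \overline{\beta}_*(1)B = (\ker\beta)^\bot$; conversely, if $T(a) \in (\ker\beta)^\bot = \overline{\beta}_*(1)B$, write $T(a) = \overline{\beta}_*(1)b$ and use that $\overline{\beta}_*(1)$ is idempotent to get $\overline{\beta}_*(1)T(a) = \overline{\beta}_*(1)^2 b = \overline{\beta}_*(1)b = T(a)$. Hence $J = \{a\in A : T(a) \in (\ker\beta)^\bot\} = T^{-1}((\ker\beta)^\bot)$. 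In particular, as the preimage under the homomorphism $T$ of the ideal $(\ker\beta)^\bot$ in $B$, $J$ is an ideal in $A$.

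It remains to check $J \subseteq (\ker\alpha)^\bot$. Here I would use that $T$ is an embedding, so the relations recorded after Definition \ref{12345} hold; in particular $\ker\alpha = T^{-1}(\ker\beta)$. Take $a\in J$ and $c \in \ker\alpha$; then $T(c) \in \ker\beta$ while $T(a) \in (\ker\beta)^\bot$, so $T(ac) = T(a)T(c) = 0$, and injectivity of $T$ gives $ac = 0$. Thus $a\,\ker\alpha = \{0\}$, i.e.\ $a \in (\ker\alpha)^\bot$. This establishes $J \subseteq (\ker\alpha)^\bot$, and together with the previous paragraph shows that $T$ is a $J$-covariant embedding in the sense of Definition \ref{12345}.

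The only point requiring care---and the closest thing to an obstacle---is the passage between the multiplier-level identity $\overline{\beta}_*(1)T(a) = T(a)$ in $M(B)$ and the membership $T(a) \in (\ker\beta)^\bot$ inside $B$; this is handled cleanly by the description $\overline{\beta}_*(1)B = (\ker\beta)^\bot$ from Proposition \ref{lemma for extensions of transfers} together with idempotency of $\overline{\beta}_*(1)$, so that no appeal to strict closures is actually needed. Everything else is a routine consequence of $T$ being an injective morphism.
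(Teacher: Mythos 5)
Your proof is correct and follows essentially the same route as the paper: the paper's entire proof is the one-line identity $\{a\in A:\overline{\beta}_*(1)T(a)=T(a)\}=T^{-1}((\ker\beta)^\bot)$, which is exactly your first step, resting on the fact from Proposition \ref{lemma for extensions of transfers} that $\overline{\beta}_*(1)$ is the central projection with $\overline{\beta}_*(1)B=(\ker\beta)^\bot$. Your additional verification that $J\subseteq(\ker\alpha)^\bot$ is a correct (and harmless) re-derivation of the general fact about embeddings that the paper records just before Definition \ref{12345} and therefore omits from the proof.
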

\begin{proof}
We have 
$\{a\in A: \overline{\beta}_*(1)T(a)=T(a)\}=\{a\in A: T(a)\in(\ker\beta)^\bot\}=T^{-1}((\ker\beta)^\bot)$.
\end{proof}
 
Composition with covariant embeddings does not affect the covariance.
 \begin{lem}\label{composition of embeddings}
 Let $(B,\beta)\stackrel{S}{\rightarrow}(C,\gamma)$ be a covariant embedding. A morphism  $(A,\alpha)\stackrel{T}{\rightarrow}(B,\beta)$ is covariant on  $J$ iff  $(A,\alpha)\stackrel{S\circ T}{\longrightarrow}(C,\gamma)$ is covariant on  $J$.
      \end{lem}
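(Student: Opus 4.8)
The plan is to reduce the equivalence to the elementary behaviour of set-theoretic preimages under composition. First I would note that $S\circ T$ is again a morphism in $\dyna$: it is a composition of non-degenerate homomorphisms, hence non-degenerate, and $S\circ T\circ\alpha = S\circ\beta\circ T=\gamma\circ S\circ T$, so it intertwines $\alpha$ and $\gamma$. Thus the notion of covariance on $J$ (Definition \ref{general def of covariance}) makes sense for $S\circ T$, and by definition the two statements to be compared read $J\subseteq T^{-1}((\ker\beta)^\bot)$ and $J\subseteq (S\circ T)^{-1}((\ker\gamma)^\bot)$ respectively.

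The key point is that the hypothesis that $S$ is a covariant embedding pins down the preimage $S^{-1}((\ker\gamma)^\bot)$ exactly. Indeed, being a covariant embedding means $S$ is a $(\ker\beta)^\bot$-covariant embedding, so the defining equality \eqref{J-covariance equality} in Definition \ref{12345} yields
\[
S^{-1}((\ker\gamma)^\bot)=(\ker\beta)^\bot.
\]
Combining this with the identity $(S\circ T)^{-1}(Z)=T^{-1}(S^{-1}(Z))$, valid for any subset $Z$, and taking $Z=(\ker\gamma)^\bot$, I obtain
\[
(S\circ T)^{-1}((\ker\gamma)^\bot)=T^{-1}\big(S^{-1}((\ker\gamma)^\bot)\big)=T^{-1}((\ker\beta)^\bot).
\]
Hence the two ideals entering the respective covariance conditions are literally the same, and the claimed equivalence follows at once.

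There is essentially no real obstacle here: the whole content lies in the computation of $S^{-1}((\ker\gamma)^\bot)$, which the covariance of the embedding $S$ forces to equal $(\ker\beta)^\bot$. The only mild care needed is to verify that $S\circ T$ is a genuine morphism so that covariance on $J$ is defined for it, and to unwind the definition of covariant embedding into the displayed equality; neither step involves any analysis, so the lemma is immediate.
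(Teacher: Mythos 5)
Your proof is correct and is essentially identical to the paper's: both reduce the lemma to the computation $(S\circ T)^{-1}((\ker\gamma)^\bot)=T^{-1}\bigl(S^{-1}((\ker\gamma)^\bot)\bigr)=T^{-1}((\ker\beta)^\bot)$, where the last equality is precisely the definition of $S$ being a covariant embedding. The only difference is that you also spell out why $S\circ T$ is a morphism, which the paper leaves implicit.
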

       \begin{proof} 
We have    $(S\circ T)^{-1}( (\ker\gamma)^\bot)=T^{-1}( S^{-1}(\ker\gamma)^\bot)=T^{-1}( (\ker\beta)^\bot)$.
   \end{proof}
   
   We finish this subsection by noting that  $\dyna$ is a category with direct limits.
\begin{prop}\label{existence of direct limits}
A direct sequence 
$$
(B_0,\beta_0) \stackrel{T_0}{\longrightarrow} (B_1,\beta_1)
\stackrel{T_1}{\longrightarrow} (B_2,\beta_2)  \stackrel{T_2}{\longrightarrow}...\,
$$
has   a direct limit $(B,\beta)$ in the category $\dyna$, where $B=\underrightarrow{\lim\,\,}\{B_{n},T_{n}\}$ is the $C^*$-algebraic direct limit and $\beta$ is the induced endomorphism.

Moreover, if all of the bonding maps $T_n$, $n\in \N$, are covariant morphism then the natural morphisms $(B_n,\beta_n)\stackrel{\phi_n}{\rightarrow}(B,\beta)$ are also covariant.
\end{prop}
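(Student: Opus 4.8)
\emph{Plan.} The plan is to put $(B,\beta)$ on the underlying $C^*$-algebraic direct limit and then to check, in order, that $\beta$ is a well-defined endomorphism, that it is extendible, that the canonical maps $\phi_n$ are morphisms in $\dyna$, and that the cocone they form is universal; the covariance statement will be handled at the end.

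First I would set $B=\varinjlim\{B_n,T_n\}$ with canonical homomorphisms $\phi_n\colon B_n\to B$ satisfying $\phi_{n+1}\circ T_n=\phi_n$, and abbreviate the composite bonding maps by $T_{n,m}:=T_{m-1}\circ\dots\circ T_n$ for $m\ge n$. Since $T_n\circ\beta_n=\beta_{n+1}\circ T_n$, the homomorphisms $\phi_n\circ\beta_n\colon B_n\to B$ are compatible, so the universal property of the $C^*$-algebraic direct limit produces a unique $*$-homomorphism $\beta\colon B\to B$ with $\beta\circ\phi_n=\phi_n\circ\beta_n$. Non-degeneracy of each $\phi_n$ comes from that of the bonding maps: as $T_{n,m}(B_n)B_m=B_m$ by Cohen--Hewitt, the closed linear span of $\phi_n(B_n)B$ contains $\phi_m(T_{n,m}(B_n)B_m)=\phi_m(B_m)$ for every $m\ge n$, hence equals $B$, so $\phi_n$ is non-degenerate. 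The universal property in $\dyna$ is then routine: given compatible morphisms $\psi_n\colon(B_n,\beta_n)\to(C,\gamma)$, the induced homomorphism $\Psi\colon B\to C$ is non-degenerate (because $\Psi\circ\phi_0=\psi_0$ is) and satisfies $\Psi\circ\beta=\gamma\circ\Psi$ on the dense subalgebra $\bigcup_n\phi_n(B_n)$, hence everywhere.

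The main obstacle is the extendibility of $\beta$, i.e.\ that $\beta$ carries some approximate unit of $B$ to a strictly convergent net. Here I would first observe that $P:=\overline{\phi_n}(\overline{\beta_n}(1))$ is independent of $n$: applying the multiplier extensions to $T_n\circ\beta_n=\beta_{n+1}\circ T_n$ and using $\overline{T_n}(1)=1$ gives $\overline{T_n}(\overline{\beta_n}(1))=\overline{\beta_{n+1}}(1)$, whence $\overline{\phi_{n+1}}(\overline{\beta_{n+1}}(1))=\overline{\phi_n}(\overline{\beta_n}(1))$. I would then build a tower-adapted approximate unit $\mu_\lambda=\phi_{n(\lambda)}(w_\lambda)$ of $B$, choosing $w_\lambda$ far enough along an approximate unit of $B_{n(\lambda)}$ so that simultaneously $\{\mu_\lambda\}$ is an approximate unit of $B$ and $\beta_{n(\lambda)}(w_\lambda)$ acts like $\overline{\beta_{n(\lambda)}}(1)$ on the finitely many relevant elements. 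For $b=\phi_m(c)$ and $\lambda$ large this gives
\[
\beta(\mu_\lambda)b=\phi_{n(\lambda)}\!\big(\beta_{n(\lambda)}(w_\lambda)\,T_{m,n(\lambda)}(c)\big)\approx \overline{\phi_{n(\lambda)}}(\overline{\beta_{n(\lambda)}}(1))\,\phi_m(c)=Pb,
\]
so $\beta(\mu_\lambda)\to P$ strictly and $\beta$ is extendible by the criterion recalled in the preliminaries. This makes $(B,\beta)$ an object of $\dyna$ and each $\phi_n$ a morphism.

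For the covariance statement I would first note that a composition of covariant morphisms is covariant (since $S(T((\ker)^\bot))\subseteq S((\ker)^\bot)\subseteq(\ker)^\bot$), so each $T_{n,m}$ is covariant and the factorization $\phi_n=\phi_m\circ T_{n,m}$ reduces the claim to $\phi_m\big((\ker\beta_m)^\bot\big)\subseteq(\ker\beta)^\bot$ for every $m$. The key point is the identification of $\ker\beta$ as the norm-closure of the increasing union $\bigcup_m\phi_m(\ker\beta_m)$. For the nontrivial inclusion I would use that $\|\beta_m(c)\|=\textrm{dist}(c,\ker\beta_m)$ (because $B_m/\ker\beta_m$ embeds isometrically into $B_m$) together with the direct-limit norm formula $\|\phi_n(x)\|=\lim_{m\ge n}\|T_{n,m}(x)\|$: writing $d\in\ker\beta$ as a limit of $\phi_{m_j}(c_j)$ one gets $\|\phi_{m_j}(\beta_{m_j}(c_j))\|\to 0$, which after passing far enough up the tower lets one replace $c_j$ by genuine elements of $\ker\beta_{m_j}$ without changing the limit. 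Granting this, for $a\in(\ker\beta_n)^\bot$ and $d=\lim_j\phi_{m_j}(c_j)$ with $c_j\in\ker\beta_{m_j}$ and $m_j\ge n$,
\[
\phi_n(a)\,d=\lim_j\phi_{m_j}\big(T_{n,m_j}(a)\,c_j\big)=0,
\]
since $T_{n,m_j}(a)\in(\ker\beta_{m_j})^\bot$ by covariance of $T_{n,m_j}$ while $c_j\in\ker\beta_{m_j}$. Hence $\phi_n(a)\in(\ker\beta)^\bot$, i.e.\ $\phi_n$ is covariant.
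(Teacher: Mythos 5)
Your proposal is correct, and its skeleton (the $C^*$-algebraic limit, the induced $\beta$, non-degeneracy of the $\phi_n$, extendibility, the universal property, covariance) coincides with the paper's; the genuine differences lie in the two delicate steps. For extendibility the paper is more economical than you are: since $\phi_0$ is non-degenerate, $\{\phi_0(\mu_\lambda)\}$ is already an approximate unit of $B$ whenever $\{\mu_\lambda\}$ is one for $B_0$, and then $\beta(\phi_0(\mu_\lambda))\phi_n(b)=\phi_n\big(T_{n,0}(\beta_0(\mu_\lambda))b\big)\to\phi_n\big(\overline{T}_{n,0}(\overline{\beta}_0(1))b\big)$ by extendibility of $\beta_0$ and non-degeneracy of $T_{n,0}$, so the bounded self-adjoint net $\beta(\phi_0(\mu_\lambda))$ converges strictly with no further work. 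Your tower-adapted approximate unit reaches the same limit projection $P$, but it requires the diagonal bookkeeping you only sketch, and the resulting net need not be increasing, so one should also note that the ``some approximate unit'' criterion tolerates such nets (it does). For covariance, your route is in one respect more complete than the paper's: the paper shows $\|\phi_n(a)\phi_k(b)\|=0$ for those elements $\phi_k(b)\in\ker\beta$ with $b\in B_k$, via the direct-limit norm formula and the fact that $\beta_m$ is isometric on $(\ker\beta_m)^\bot$, and then concludes $\phi_n(a)\in(\ker\beta)^\bot$; that last inference tacitly uses that $\ker\beta\cap\bigcup_k\phi_k(B_k)$ is dense in $\ker\beta$, a standard fact about ideals in direct limits which the paper does not justify. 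Your key lemma --- that $\ker\beta$ equals the norm closure of $\bigcup_m\phi_m(\ker\beta_m)$, proved from $\|\beta_m(c)\|=\mathrm{dist}(c,\ker\beta_m)$ together with the same norm formula --- supplies exactly this density, after which the annihilation $\phi_n(a)\,\phi_{m_j}(c_j)=0$ is exact at every finite stage rather than a limiting norm estimate. In short, both proofs run on the same two ingredients (the direct-limit norm formula and isometry of injective $*$-homomorphisms), but your packaging closes a step the paper leaves implicit, at the price of a clumsier extendibility argument.
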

\begin{proof}
Let $n\in \N$. Notice first that   the natural homomorphism $\phi_n:B_n \to B$ is  non-degenerate.  Indeed, any element in $B$ can be approximated by  $\phi_m(b_m)\in B$ for certain $b_m\in B_m$ and $m>n$.  As a composition of finite number of non-degenerate homomorphisms, the homomorphism $T_{m,n}:=T_m\circ ... \circ T_{n+1}\circ T_n:B_n \to B_m$ is non-degenerate. Thus   $b_m=T_{m,n}(a_n)c_m$ for certain $a_n\in B_n$, $c_m\in B_m$. Hence  $\phi_m(b_m)=\phi_n(a_n)\phi_m(c_m)$ and this proves our claim.
\\
Clearly, the formula $\beta(\phi_n(b)):=\phi_{n}(\beta_n(b))$, $b\in B_n$, $n\in \N$, yields a well defined endomorphism $\beta:B\to B$. To see it  is extendible let $\{\mu_\lambda\}$  be an approximate unit in $B_0$. As $\phi_0$ is non-degenerate,   $\{\phi_0(\mu_\lambda)\}$ is an approximate unit in $B$. 
 It follows
that the  (bounded and self-adjoint) net $\{\beta(\phi_0(\mu_\lambda))\}$ converges strictly in  $M(B)$ because  for any $b\in B_n$, $n\in \N$, the net 
$$
\beta(\phi_0(\mu_\lambda))\phi_n(b)=\phi_n(T_{n,0}(\beta_0(\mu_\lambda))b)
$$
converges to $\phi_n(\overline{T}_{n,0}(\overline{\beta}_0(1))b)$. Hence $\beta$  is extendible and $(B,\beta) \in \dyna$.
\\
Let  $(C,\gamma)$ be an arbitrary  $C^*$-dynamical system equipped with morphisms $(B_n,\beta_n)\stackrel{\psi_n}{\rightarrow}(C,\gamma)$ such that $\psi_{n+1}\circ T_n=\psi_{n}$ for all  $n\in \N$. Then the universal property of the $C^*$-algebraic direct limit $B$ implies that there is a homomorphism $\Psi:B\to C$ such that  
$
\Psi\circ \phi_n=\psi_n
$, $n\in \N$. In particular, $\Psi$ is non-degenerate because each of $\psi_n$ is. Since
$$
(\Psi\circ \beta)\circ \phi_n=\Psi\circ \phi_n\circ \beta_n=\psi_n\circ \beta_n=\gamma \circ \psi_n=(\gamma \circ \Psi)\circ \phi_n,
$$ 
we conclude that  $(B,\beta)\stackrel{\Psi}{\rightarrow}(C,\gamma)$ is a morphism, and  $(B,\beta)$ is the  direct limit in $\dyna$. 

Now suppose that  all of the bonding maps $T_m$, $m\in \N$, are covariant  and let $a\in (\ker\beta_n)^\bot$ for a fixed $n\in \N$. Covariance implies that for any $m>n$ we have  $T_{m,n}(a)\in (\ker\beta_m)^\bot$. Now let $\phi_k(b) \in \ker\beta$ where  $b\in B_k$ and $k>n$.  For any $m>k$ we have $T_{m,n}(a)T_{m,k}(b) \in (\ker\beta_m)^\bot$ and $\phi_m(\beta_m(T_{m,n}(a)T_{m,k}(b)))=0$. Using these  relations and the explicit  formula for norm in the direct limit $B$ we get 
\begin{align*}
\|\phi_n(a)\phi_k(b)\|
&=\lim_{m\to \infty }\|T_{m,n}(a)T_{m,k}(b)\|=\lim_{m\to \infty }\|\beta_m(T_{m,n}(a)T_{m,k}(b))\|\\
&=\lim_{m\to \infty }\|\phi_m(\beta_m(T_{m,n}(a)T_{m,k}(b)))\|=0.
\end{align*}
Hence $\phi_n(a)\phi_k(b)=0$ and this implies that $\phi_n(a) \in (\ker\beta)^\bot$. Accordingly, $(B_n,\beta_n)\stackrel{\phi_n}{\rightarrow}(B,\beta)$ is covariant.
\end{proof}
\begin{rem}\label{extension of direct limits remark}
The direct limit system $(B,\beta)$ sits  in both its strictly continuous extension $(M(B),\overline{\beta})$ and the direct limit system  
associated to the   direct sequence obtained by  strictly continuous extensions:
\begin{equation}\label{ciag prosty rozszerzonych C*-algebr}
(M(B_0),\overline{\beta}_0) \stackrel{\overline{T}_0}{\longrightarrow} (M(B_1),\overline{\beta}_1)
\stackrel{\overline{T}_1}{\longrightarrow} (M(B_2),\overline{\beta}_2)  \stackrel{\overline{T}_2}{\longrightarrow}...\, .
\end{equation}
In fact, with  obvious identifications we have
$
B \,\subseteq\, \underrightarrow{\lim\,\,}\{M(B_{n}),\overline{T}_{n}\}\, \subseteq \,M(B).
$

\end{rem}

\section{Natural reversible extensions of $C^*$-dynamical systems and invariant ideals}\label{reversible extension subsection}
We fix a $C^*$-dynamical system $(A,\al)$ and an ideal $J$ in $(\ker\al)^\bot$.
Our first aim  is to construct  a universal reversible $C^*$-dynamical system $(B,\beta)$ which is a $J$-extension of $(A,\al)$. Next we study relationship between invariant ideals in $(B,\beta)$ and $(A,\al)$.
 In the sequel, we will abbreviate the  long phrase `reversible $C^*$-dynamical system which is a $J$-extension' to the short expression `\emph{reversible $J$-extension}'.

\subsection{The main construction}
We will define the system $(B,\beta)$   as a direct limit of certain approximating $C^*$-dynamical systems $(B_n,\beta_n)$, $n\in \N$, that are constructed as follows.
 Denote by $q:A\to A/J$ the quotient map and for each  $n\in \N$
put
$$\label{algebry A_n 2}
A_n:=\overline{\al}^n(1)A\overline{\al}^n(1).
$$
Define the $C^*$-algebra $B_n$ as a direct sum of the form
$$
B_n=q(A_{0})\oplus q(A_{1})\oplus ... \oplus q(A_{n-1}) \oplus A_{n}.
$$
Let  $\beta_n:B_n \to B_n$  be  given by
$$
\beta_n(a_{0}\oplus a_{1}\oplus ... \oplus a_{n})=a_{1}\oplus a_{ 2}\oplus ... \oplus q(a_{n})\oplus \al(a_n).
$$
Clearly, $\beta_n$ is an extendible endomorphism. Thus we have a sequence $(B_n,\beta_n)$, $n\in \N$, of $C^*$-dynamical systems with $(B_0,\beta_0)= (A,\alpha)$. 

Define the bonding homomorphisms $T_n:B_n \to B_{n+1}$, $n\in \N$, as schematically
presented by the diagram
\begin{equation}\label{direct limit diagram to be dualized}
\begin{xy}
\xymatrix@C=3pt{
      **[r]  B_n \ar[d]^{T_n}& = &  q(A_{0}) \ar[d]^{id} &  \oplus & ... & \oplus &
      q(A_{n-1})\ar[d]^{id}& \oplus & A_{n} \ar[d]^{q}    \ar[rrd]^{\al}   \\
       B_{n+1} & = &  q(A_{0})& \oplus & ... &  \oplus& q(A_{n-1}) & \oplus & q(A_{n}) & \oplus  & A_{n+1}
        }
  \end{xy}
  \end{equation}
and formally given  by  the formula
$$
T_n (a_{0}\oplus ... \oplus a_{ n-1}\oplus a_{n})= a_{0}\oplus ... \oplus a_{ n-1}\oplus q(a_{n})
\oplus  \al(a_{n}),
$$
where $a_{k}\in q(A_{k})$, $k=0,...,n-1,$ and $a_{n}\in A_{n}$. Since $J\subseteq (\ker\al)^\bot$, the homomorphisms $T_n$ are  injective. 
Plainly,  
$T_n\circ \beta_n=\beta_{n+1}\circ T_n$, $n \in \N$. Hence  we have the direct sequence of embeddings:
\begin{equation}\label{ciag prosty C*-algebr}
(A,\alpha)=(B_0,\beta_0) \stackrel{T_0}{\longrightarrow} (B_1,\beta_1)
\stackrel{T_1}{\longrightarrow} (B_2,\beta_2)  \stackrel{T_2}{\longrightarrow}...\,
\end{equation}
\begin{thm}\label{granica prosta zepsute twierdzenie}
Let  $(B,\beta)$ be the direct limit  of the direct  sequence \eqref{ciag prosty C*-algebr}. Then  $(B,\beta)$ is a reversible  $J$-extension of  $(A,\al)$, with an embedding $T$, possessing the following properties: 
 \begin{itemize}
 \item[i)] $B=\sum_{n=0}^\infty \beta_*^n(T(A))$  and $J=\{a\in A: \overline{\beta}_*(1)T(a)=T(a)\}$.
\item[ii)] If  $(C,\gamma)$ is a reversible  $J$-extension  of  $(A,\al)$ and $S$ is the corresponding embedding, then there is a unique covariant embedding $\widetilde{S}$ of $(B,\beta)$ into $(C,\gamma)$ such that $S= \widetilde{S}\circ T$, i.e. the diagram
$$
\xymatrix{ & (A,\al)   \ar[ld]_T  \ar[rd]^S &\\
   (B,\beta)  \ar[rr]^{\widetilde{S}}  & &  (C,\gamma)   
 }
$$
commutes. 
\item[iii)] Any  reversible  extension of $(A,\alpha)$ that possess one  of the properties of $(B,\beta)$ described in item i) or item ii)  is equivalent to $(B,\beta)$.
\end{itemize}
\end{thm}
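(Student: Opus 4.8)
The plan is to read off every feature of $(B,\beta)$ from the concrete approximants $(B_n,\beta_n)$. First I would record their elementary structure. Since $J\subseteq(\ker\alpha)^\bot$ forces $J\cap\ker\alpha=\{0\}$, a direct computation gives $\ker\beta_n=q(A_0)\oplus 0\oplus\cdots\oplus 0$, a direct summand, whereas $\beta_n(B_n)$ consists of $q(A_1)\oplus\cdots\oplus q(A_{n-1})$ augmented by the ``graph'' $\{q(a)\oplus\alpha(a):a\in A_n\}$ occupying the last two coordinates. The main obstacle is already visible here: this graph is \emph{not} a hereditary subalgebra, so the finite systems $(B_n,\beta_n)$ are \emph{not} reversible. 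Reversibility is a genuinely limiting phenomenon, and the heart of the proof is to see how it is created.

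To prove $(B,\beta)$ reversible I would use that the $T_n$ are injective, so the canonical maps $\phi_n$ are injective and $B=\overline{\bigcup_n\phi_n(B_n)}$. Consider $P_n:=\bigoplus_{k=0}^{n-1}\overline q(\overline\alpha^{k+1}(1))\oplus\overline\alpha^{n+1}(1)\in M(B_n)$; a check gives $\overline T_n(P_n)=P_{n+1}$, so they assemble to a single projection $P=\overline\phi_n(P_n)=\overline\beta(1)\in M(B)$, and $P_nB_nP_n=q(A_1)\oplus\cdots\oplus q(A_n)\oplus A_{n+1}$. Using $\phi_n=\phi_{n+1}\circ T_n$ one verifies $\phi_n(P_nB_nP_n)\subseteq\phi_{n+1}(\beta_{n+1}(B_{n+1}))$ — this is precisely where the graph constraint at level $n$ is released at level $n+1$ — whence $\beta(B)=\overline{\bigcup_n\phi_n(\beta_n(B_n))}=\overline{\bigcup_n\phi_n(P_nB_nP_n)}=PBP$ is a corner. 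Since the splittings $B_n=\ker\beta_n\oplus(\ker\beta_n)^\bot$ are respected by the $T_n$, they pass to the limit and $\ker\beta$ is complemented; by Proposition \ref{Proposition on transfers}, $(B,\beta)$ is reversible, with $\beta_*$ as in the transfer-operator formula. For item i), note $T_n^{-1}((\ker\beta_{n+1})^\bot)=(\ker\beta_n)^\bot$ for $n\ge1$ while $T_0^{-1}((\ker\beta_1)^\bot)=J$; thus $\phi_1$ is a covariant embedding (Proposition \ref{existence of direct limits} applied to the tail), and Lemma \ref{composition of embeddings} applied to $T=\phi_1\circ T_0$ gives $T^{-1}((\ker\beta)^\bot)=J$, which together with Lemma \ref{lemma describing ideal J} yields the second equality. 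The first follows by induction on $n$ from $\beta_*^n(T(A))=\phi_n(A_n)$ (placed in the last coordinate) and $\sum_{k=0}^{n}\beta_*^k(T(A))=\phi_n(B_n)$; taking closures gives $B=\sum_{n=0}^\infty\beta_*^n(T(A))$.

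For the universal property ii), given a reversible $J$-extension $(C,\gamma)$ with embedding $S$, I would build compatible morphisms $\psi_n:(B_n,\beta_n)\to(C,\gamma)$ by setting $\psi_n|_{A_n}(a)=\gamma_*^n(S(a))$ and $\psi_n|_{q(A_k)}(q(a))=\gamma_*^k\big((1-\overline\gamma_*(1))S(a)\big)$. The identity $\overline S(\overline\alpha^k(1))=\overline\gamma^k(1)$ places $S(A_k)$ inside the corner $\gamma^k(C)$, where $\gamma_*^k$ is multiplicative, so each summand is a $*$-homomorphism; $J$-covariance of $S$ makes the $q(A_k)$-maps well defined; and centrality of $\overline\gamma_*(1)$ forces the ranges into the mutually orthogonal sectors cut out by $\overline\gamma_*^k(1)-\overline\gamma_*^{k+1}(1)$, so $\psi_n$ is an injective non-degenerate homomorphism. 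A routine computation with $\gamma(\gamma_*(c))=\overline\gamma(1)c\overline\gamma(1)$ and $\gamma_*(\gamma(c))=\overline\gamma_*(1)c$ verifies $\psi_n\circ\beta_n=\gamma\circ\psi_n$ and $\psi_{n+1}\circ T_n=\psi_n$. The universal property of the direct limit (Proposition \ref{existence of direct limits}) then produces $\widetilde{S}:(B,\beta)\to(C,\gamma)$ with $\widetilde{S}\circ T=\psi_0=S$; it is injective because each $\psi_n$ is, and $\widetilde{S}\circ\beta_*=\gamma_*\circ\widetilde{S}$ holds on the dense set $\bigcup_n\beta_*^n(T(A))$, so $\widetilde{S}$ is covariant by Proposition \ref{lemma for reversibles}. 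Uniqueness is immediate: any covariant lift $\widetilde{S}'$ of $S$ satisfies $\widetilde{S}'(\beta_*^n(T(a)))=\gamma_*^n(S(a))$, hence agrees with $\widetilde{S}$ on a dense subalgebra.

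Finally, item iii) is the standard uniqueness of universal objects. If a reversible extension $(B',\beta')$ with embedding $T'$ satisfies ii), then applying its universal property to $(B,\beta)$ and vice versa produces mutually inverse covariant embeddings over $A$ (one first checks, via Lemma \ref{composition of embeddings}, that $T'$ is $J$-covariant, so that $(B',\beta')$ is a legitimate reversible $J$-extension). If instead $(B',\beta')$ satisfies i), then $T'$ is $J$-covariant by Lemma \ref{lemma describing ideal J}, so the universal property of $(B,\beta)$ gives a covariant embedding $\Phi:B\to B'$ over $A$; then $\Phi\circ\beta_*=\beta'_*\circ\Phi$ yields $\Phi(\beta_*^n(T(A)))=(\beta')_*^n(T'(A))$, and $B'=\sum_n(\beta')_*^n(T'(A))$ forces $\Phi$ onto, hence an isomorphism. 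In either case $(B',\beta')$ is equivalent to $(B,\beta)$.
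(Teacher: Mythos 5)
Your proof is correct and follows essentially the same route as the paper's: the same compatible multiplier projections $P_n=\overline{\beta}_n(1)$ and $p_n$ passed through the limit to establish reversibility, the same sector-decomposition formula $\widetilde{S}(\phi_n(q(a_0)\oplus\cdots\oplus a_n))=QS(a_0)+\gamma_*(QS(a_1))+\cdots+\gamma_*^n(S(a_n))$ for item ii), and the same appeals to Lemma \ref{composition of embeddings}, Lemma \ref{lemma describing ideal J}, Proposition \ref{existence of direct limits} and Proposition \ref{lemma for reversibles} for the covariance claims. The only deviations are cosmetic and harmless: you obtain $B=\sum_{n=0}^\infty\beta_*^n(T(A))$ by induction (adding $\beta_*^{n+1}(T(A))$ to $T_n(B_n)$ fills out $B_{n+1}$) instead of the paper's explicit telescoping substitution $b_k=a_k-\sum_{i<k}\alpha^{k-i}(a_i)$, and in item iii) you handle the case of property ii) by the double-application uniqueness argument, where the paper instead shows directly that the single covariant embedding $\widetilde{T}$ is surjective via $\widetilde{T}\circ\gamma_*\circ S=\beta_*\circ T$ and item i).
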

\begin{proof}
Let  $n>0$. We have   $(\ker\beta_n)^\bot=0 \oplus q(A_1)\oplus ... \oplus q(A_{n-1}) \oplus A_n$. Thus  by the form of the bonding map $T_n$, we see that  $(B_n,\beta_n) \stackrel{T_n}{\longrightarrow} (B_{n+1},\beta_{n+1})$ is a covariant embedding and clearly $(A,\alpha) \stackrel{T_0}{\longrightarrow} (B_1,\beta_1)$ is a $J$-embedding. Accordingly, denoting by $\phi_n:B_n\to B$ the natural embeddings into the direct limit $B$, we deduce from  Lemma \ref{composition of embeddings} and Proposition \ref{existence of direct limits},  that $T:=\phi_0$ is a $J$-embedding of $(A,\alpha)$ into  $(B,\beta)$.
\\
To prove that $(B,\beta)$ is reversible, recall that we may  treat $\underrightarrow{\lim\,\,}\{M(B_{n}),\overline{T}_{n}\}$ as a $C^*$-subalgebra of $M(B)$, cf. Remark \ref{extension of direct limits remark}. In particular,  for each $n>0$ the projection  from $B_n$ onto $(\ker\beta_n)^\bot$, which we denote by $p_n$, is an element of $M(B_n)$, and $\overline{T}_n(p_n)=p_{n+1}$. It follows that  $\overline{\phi}_n(p_n)$ 
is  the projection from $B$ onto $(\ker \beta)^\bot$. Hence $(\ker \beta)^\bot$ is a  complemented ideal in $B$. Similarly, for each $n>0$ we have 
$
 \overline{T}_{n}(\overline{\beta}_{n}(1))=\overline{\beta}_{n+1}(1)$, and therefore  $\overline{\beta}(1)=\overline{\phi}_n(\overline{\beta}_n(1))$. To see that 
   $\beta(B)$ is a hereditary subalgebra it suffices to note that $\overline{\beta}(1)B \overline{\beta}(1)\subseteq \beta(B)$. To show the latter put $E_p(a):=pap$, for any $a,p \in M(A)$. Then 
     $$
 \overline{\beta}(1)\left(\phi_n(q(a_{0})\oplus ... \oplus q(a_{n-1}) \oplus a_{n} )\right)\overline{\beta}(1)
   $$ 
   equals to 
   $$
\phi_n \left(q(E_{\overline{\alpha}(1)}(a_{0}))\oplus ... \oplus q(E_{\overline{\alpha}^n(1)}(a_{n-1})) \oplus E_{\overline{\alpha}^{n+1}(1)}(a_{n})\right),
   $$
   which in turn equals to 
   $$
\beta\left(\phi_{n+1}(0 \oplus q(E_{\overline{\alpha}(1)}(a_{0}))\oplus ... \oplus q(E_{\overline{\alpha}^n(1)}(a_{n-1})) \oplus E_{\overline{\alpha}^{n+1}(1)}(a_{n}) )\right).
$$ 
Thus $\beta(B)=\overline{\beta}(1)B \overline{\beta}(1)$ and by Proposition \ref{Proposition on transfers} the system $(B,\beta)$ is reversible. Moreover, if follows from the above calculation  and \eqref{complete transfer operator form} that the  complete transfer operator $\beta_*$ maps $\phi_n(B_n)$ into $\phi_{n+1}(B_{n+1})$ where
$$
\beta_*(\left(\phi_n(q(a_{0})\oplus ... \oplus q(a_{n-1}) \oplus a_{n} )\right))
$$
equals to
$$
\phi_{n+1}\left(0 \oplus q(E_{\overline{\alpha}(1)}(a_{0}))\oplus ... \oplus q(E_{\overline{\alpha}^n(1)}(a_{n-1})) \oplus E_{\overline{\alpha}^{n+1}(1)}(a_{n}) \right).
$$  
This proves the initial part of the assertion.

i). Recall that $T=\phi_0$. Using the  above description of $\beta_*$ one checks that  for $b_k \in A_k$, $k=0,...,n$, we have
$$
(\beta_*^k\circ T)(b_k)=
 \phi_n\left( 0\oplus ... \oplus 0 \oplus q(b_k)\oplus q(\alpha(b_k)) \oplus ...  q(\alpha^{n-k-1}(b_k)) \oplus \alpha^{n-k}(a_{n})\right).
$$
Thus putting $b_k=a_k - \sum_{i=0}^{k-1}\alpha^{k-i}(a_i)$ where $a_k \in A_k$, $k=0,...,n$, and summing over $k$ we get
$$
\phi_n(q(a_{0})\oplus ... \oplus q(a_{n-1}) \oplus a_{n}) =\sum_{k=0}^n (\beta_*^k\circ T)\left(a_k - \sum_{i=0}^{k-1}\alpha^{k-i}(a_i)\right).
$$
Hence $B\subseteq \sum_{n=0}^\infty \beta_*^n(T(A))$. As the reverse inclusion is obvious we get $B=\sum_{n=0}^\infty \beta_*^n(T(A))$.
 We have $J=
\{a\in A: \overline{\beta}_*(1)T(a)=T(a)\}$ by Lemma \ref{lemma describing ideal J}.

ii). We claim that  the desired covariant embedding  $(B,\beta)  \stackrel{\widetilde{S}}{\rightarrow}   (C,\gamma)$ can be defined by the  formula 
\begin{equation}\label{formula szpiegula666}
 \qquad \S(\phi_n(q(a_{0})\oplus q(a_1)\oplus  ... \oplus a_{n}))=
 QS(a_{0})+ \gamma_*(Q S(a_{1})) ... + \gamma_*^n(S(a_{n}))
\end{equation}
where $Q:=(1-\overline{\gamma}_*(1))$ is a central  projection  in $M(C)$, see Proposition \ref{lemma for extensions of transfers}. To this end  notice  that $\overline{S}(\overline{\alpha}^k(1))=\overline{\gamma}^k(1)$, $k\in \N$, and  $J=
\{a\in A: \overline{\gamma}_*(1)S(a)=S(a)\}$, cf. Lemma  \ref{lemma describing ideal J}. Thus one  deduces that   
$$
A_k/J \ni q(a) \longmapsto  QS(a) \in   Q\overline{\gamma}^k(1) S(A)\overline{\gamma}^k(1)
$$
is a well defined  isomorphism. Moreover, composing it with the isomorphism $\gamma_*^k:\gamma^k(C)=\overline{\gamma}^k(1) C\overline{\gamma}^k(1)\to \overline{\gamma}_*^k(1)C$ we get the injective homomorphism 
\begin{equation}\label{A_k/J mapping}
A_k/J \ni q(a) \longmapsto \gamma_*^k( QS(a)) \in   \overline{\gamma}_*^k(Q) C. 
\end{equation}
Similarly, for each $n\in \N$, we get the injective homomorphism 
\begin{equation}\label{A_n mapping}
A_n \ni  a \longmapsto \gamma_*^n( S(a)) \in   \overline{\gamma}_*^n(1) C.
\end{equation}
Now,  since the projections $\{\overline{\gamma}_*^k(1)\}_{k\in \N}$ are decreasing and central  in $M(C)$ we see  that the 
projections  
$$
\overline{\gamma}_*^k(Q)=\overline{\gamma}_*^{k}(1)-\overline{\gamma}_*^{k+1}(1), \,\, k=0,...,n-1, \quad\textrm{and} \quad  \overline{\gamma}_*^n(1)
$$
 are pair-wise
orthogonal and central in $M(C)$. Hence the direct sum of mappings \eqref{A_k/J mapping} for $k=0,1...,n-1$ and the mapping \eqref{A_n mapping}, establishes an injective homomorphism from $B_n=\bigoplus_{k=0}^{n-1}q(A_k)\oplus A_n$ to $C=\bigoplus_{k=0}^{n-1} \overline{\gamma}_*^k(Q) C \oplus \overline{\gamma}_*^n(1) C$.
As $\phi_n:B_n \to B$ is injective,  we conclude that  the formula \eqref{formula szpiegula666} yields a well defined  injective homomorphism from $\phi_n(B_n)$ into $C$. 
To see that it actually defines  a homomorphism from $B$ to $C$ note that   
$$
S(a_{n})=Q S(a_{n}) +\overline{\gamma}_*(1)S(a_{n})=QS(a_{n}) +\gamma_*(\gamma(S(a_{n}))=QS(a_{n}) +\gamma_*(S(\al(a_{n})),
$$
and hence by \eqref{formula szpiegula666},  we get
\begin{align*}
\widetilde{S}(\phi_n(q(a_{0})\oplus ... \oplus a_{n}))&=\widetilde{S}(\phi_{n+1}(q(a_{0})\oplus  ... \oplus q(a_{n}) \oplus \al(a_n)))
\\
&=\widetilde{S}(\phi_{n+1}(T_n(q(a_{0})\oplus ... \oplus a_{n}))). 
\end{align*}
Hence $\widetilde{S}:B\to C$ is an injective homomorphism. 
\\
Clearly,  $S= \widetilde{S}\circ T$. In particular, $\S:B\to C $ is  non-degenerate because $S:A\to C$ is.
 Furthermore,  for $a\in \overline{\gamma}^k(1)C\overline{\gamma}^k(1)$, $k>0$, we have
 $$
 \gamma(\gamma_*^k(a)))=\overline{\gamma}(1) \gamma_*^{k-1}(a) \overline{\gamma}(1)=\gamma_*^{k-1}\left(\overline{\gamma}^{k}(1)a\overline{\gamma}^k(1)\right)=\gamma_*^{k-1}\left(a\right).
 $$
Therefore, in view of \eqref{formula szpiegula666}, since  $\gamma(Q)=0$, we get 
\begin{align*}
\gamma(\S(\phi_n(	q(a_{0})\oplus q(a_1) \oplus ... \oplus a_{n})))
 &=  S(Qa_{1}) +  \gamma_*(QS(a_{2}))+... + \gamma_*^{n-1}(S(a_{n}))\\
 &=\S(\beta(\phi_n(a_{0}\oplus a_1 \oplus ... \oplus a_{n}))).
\end{align*}
Hence  $\gamma \circ \S=\S\circ \beta$ and   $(B,\beta)  \stackrel{\widetilde{S}}{\rightarrow}   (C,\gamma)$ is an embedding. 
Moreover, since
$$
\overline{\S}(\overline{\beta}_*(1))=\overline{\S}\left(\overline{\phi}_1(0 \oplus \overline{\alpha}(1))\right)=  \overline{\gamma}_*( \overline{S}(\overline{\alpha}(1)))=\overline{\gamma}_*( \overline{\gamma}(1))= \overline{\gamma}_*(1),
$$ 
$\S$ is  covariant    by Lemma \ref{lemma for reversibles}.
\\
This proves the existence part. Uniqueness of $\S$ follows immediately from the relations $B=\sum_{n=0}^\infty \beta_*^n(T(A))$ and $\gamma_* \circ \S=\S\circ \beta_*$, cf. Lemma \ref{lemma for reversibles}.

iii). Suppose  $(A,\alpha) \stackrel{S}{\rightarrow} (C,\gamma)$ is an embedding of $(A,\alpha)$ into a reversible $C^*$-dynamical system $(C,\gamma)$. If
$C=\sum_{n=0}^\infty \gamma_*^n(S(A))$  and $J=\{a\in A: \overline{\gamma}_*(1)S(a)=S(a)\}$, then $S$ is a $J$-embedding, see  Lemma \ref{lemma describing ideal J}, and in view of item i) the embedding $(B,\beta) \stackrel{\S}{\rightarrow} (C,\gamma)$ whose existence is guaranteed by item ii) is surjective. Hence $(B,\beta)$ and $(C,\gamma)$ are equivalent.
\\
If $(C,\gamma)$ possess the property described in item ii) then there is a covariant embedding $(C,\gamma) \stackrel{\widetilde{T}}{\rightarrow} (B,\beta)$ such that $T= \widetilde{T}\circ S$. Appealing again to Lemma \ref{lemma for reversibles} we get  
$$
\T\circ \gamma_*\circ S=\beta_*\circ \T\circ S  =\beta_*\circ T.
$$
Thus by item i) the embedding $\widetilde{T}$ is onto $B$. Consequently, $(B,\beta)$ and $(C,\gamma)$ are equivalent.  
 \end{proof}
We extend \cite[Definition 3.4]{kwa-ext} to  non-unital case as follows.
\begin{defn}
Let  $(A,\alpha)$ be a $C^*$-dynamical system,  $J$  an ideal in $(\ker\alpha)^\bot$ and $(B,\beta)$ the direct limit of \eqref{ciag prosty C*-algebr}.  We call $(B,\beta)$ the  \emph{natural reversible $J$-extension} of $(A,\alpha)$. If $J=(\ker\alpha)^\bot$ we drop the prefix `$J$-'.
\end{defn} 
\begin{rem}\label{identifications remark} In the sequel we will usually identify $A$ with the corresponding subalgebra of $B$. Then the natural reversible $J$-extension $(B,\beta)$ can be characterized as  a reversible $C^*$-dynamical system satisfying  
$$
B=\sum_{n=0}^\infty \beta_*^n(A), \qquad \overline{\beta_*(1)}A\cap A=J, \qquad \beta|_{A}=\alpha.
$$
To extend the above picture, so that it covers also the algebras $B_n$, $n\in \N$, we put  $q:=1-\overline{\beta_*(1)}$ (this  makes a perfect match with our previous notation on the quotient map $A\to A/J$). We   identify the algebra $B_n$, $n\in \N$, with its image in $B$ by the mapping
$$
B_n \ni \bigoplus_{k=0}^n q(a_k)\oplus a_n \longmapsto \sum_{k=0}^{n-1}\beta_*^k(q a_k) +\beta_*^n(a_n) \in B.
$$
With this identification we have 
\begin{equation}\label{direct sum decomposition}
B_n=\sum_{k=0}^{n-1}\beta_*^k(qA_k) +\beta_*^n(A_n)=\sum_{k=0}^{n-1}\beta_*^k(q) \beta_*^k(A) +\beta_*^n(A)=\sum_{k=0}^{n-1}\beta_*^k(q) B +\beta_*^n(A).%
\end{equation} 
The above sums are actually direct sums of $C^*$-algebras because the projections $\overline{\beta}_*^n(1)$ and $\overline{\beta}_*^k(q)=\overline{\beta}_*^k(1)-\overline{\beta}_*^{k+1}(1)$, $k=0,...,n-1$, are   pairwise orthogonal and central in $M(B)$,   cf. Proposition \ref{lemma for extensions of transfers}.
\end{rem}

\subsection{Invariant ideals and $J$-pairs}\label{Invariant ideals and J-pairs subsection}
Now we introduce invariant ideals and discuss the relevant notions and facts that  will lead us to description of such ideals in natural reversible extensions defined in the previous subsection.
   \begin{defn}
   We  say that an ideal 
  $I$  in $A$ is a \emph{positively invariant} ideal in $(A,\alpha)$ if $\al(I)\subseteq I$. Any such ideal $I$ induces a quotient $C^*$-dynamical system $(A/I,\al_I)$ where $\al_I(a+I)=\al(a)+I$ for all $a\in A$. We will refer to $(A/I,\al_I)$ as a \emph{subsystem} of $(A,\al)$.
   \end{defn}
 When  regarding negative invariance, as in the case  of extensions, it is useful to consider a  family of  such  notions  parametrized by an ideal. 
\begin{defn}
Let $I$ and $J$ be ideals in $A$ where $J\subseteq (\ker\al)^\bot$. We  say that
  $I$ is \emph{$J$-negatively invariant} ideal in $(A,\alpha)$ if $J\cap \al^{-1}(I)\subseteq I$.
If $I$ is both positively invariant and $J$-negatively invariant we   say that $I$ is \emph{$J$-invariant}. If $J=(\ker\alpha)^\bot$ we drop the prefix `$J$-'.
\end{defn}
When $\alpha$ is an automorphism invariance of  $I$ means that $\alpha(I)=I$.  More generally, we have
\begin{lem}\label{helping lemma}
Suppose $(A,\al)$ is a reversible $C^*$-dynamical system and let $I$ be an ideal in $A$. The following conditions are equivalent:
\begin{itemize}
\item[i)] $I$ is invariant in  $(A,\al)$, 
\item[ii)] $\al(I)\subseteq I$ and $\al_*(I)\subseteq I$,
\item[iii)]  $\al(I)=\overline{\al}(1)I\overline{\al}(1)$,
\item[iv)] $\al_*(I)=\overline{\al}_*(1)I$.
\end{itemize}
In particular, if $I$ is invariant  the system $(A/I,\al_I)$  is  reversible and the set of invariant ideals in $(A,\alpha)$, ordered by inclusion, is a lattice.
\end{lem}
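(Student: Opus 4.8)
The plan is to prove the four conditions equivalent using ii) as a hub, establishing i)$\Leftrightarrow$ii), ii)$\Leftrightarrow$iii) and ii)$\Leftrightarrow$iv) separately; this is convenient because ii) is symmetric in $\al$ and $\al_*$ and will also make the lattice statement transparent. Throughout I would lean on the generalized-inverse identities available for a reversible system, namely $\al_*\circ\al(a)=\overline{\al}_*(1)a$ and $\al\circ\al_*(a)=\overline{\al}(1)a\overline{\al}(1)$ (recorded in the proof of Proposition \ref{lemma for reversibles}), together with $\al(A)=\overline{\al}(1)A\overline{\al}(1)$ and $\al_*(A)=(\ker\al)^\bot=\overline{\al}_*(1)A$ from Propositions \ref{Proposition on transfers} and \ref{lemma for extensions of transfers}, and the fact that $1-\overline{\al}_*(1)$ is the central projection onto $\ker\al$, so that $\al(a)=\al(\overline{\al}_*(1)a)$ for all $a$.

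For i)$\Leftrightarrow$ii), note that both conditions contain $\al(I)\subseteq I$, so under that hypothesis I must match $(\ker\al)^\bot\cap\al^{-1}(I)\subseteq I$ with $\al_*(I)\subseteq I$. Given $\al_*(I)\subseteq I$ and $a\in(\ker\al)^\bot\cap\al^{-1}(I)$, the identity $\al_*(\al(a))=\overline{\al}_*(1)a=a$ (the last equality since $a\in\overline{\al}_*(1)A$) places $a$ in $\al_*(I)\subseteq I$. Conversely, assuming negative invariance, for $b\in I$ the element $\al_*(b)$ lies in $(\ker\al)^\bot$ and satisfies $\al(\al_*(b))=\overline{\al}(1)b\overline{\al}(1)\in I$, so $\al_*(b)\in(\ker\al)^\bot\cap\al^{-1}(I)\subseteq I$. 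The reformulations ii)$\Leftrightarrow$iii) and ii)$\Leftrightarrow$iv) are then bookkeeping with the same identities: one inclusion in each uses that $\al(b)=\overline{\al}(1)\al(b)\overline{\al}(1)$ (resp. $\al_*(b)=\overline{\al}_*(1)\al_*(b)$) forces any image to already carry the relevant projection, while the reverse inclusion uses $\overline{\al}(1)b\overline{\al}(1)=\al(\al_*(b))$ (resp. $\overline{\al}_*(1)b=\al_*(\al(b))$) to realize a projected element of $I$ as an $\al$- (resp. $\al_*$-) image of an element of $I$. The only slightly delicate point is recovering $\al(I)\subseteq I$ from iv): writing $\al(b)=\al(\overline{\al}_*(1)b)$ and using $\overline{\al}_*(1)b\in\overline{\al}_*(1)I=\al_*(I)$ to write it as $\al_*(c)$ with $c\in I$, one gets $\al(b)=\al(\al_*(c))=\overline{\al}(1)c\overline{\al}(1)\in I$.

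I expect the main obstacle to be exactly this translation between the preimage description in the definition of invariance (condition i), phrased via $\al^{-1}$) and the transfer-operator description (condition ii)): one must keep careful track of which of the two central projections $\overline{\al}(1)$, $\overline{\al}_*(1)$ is being applied and repeatedly invoke the ideal property of $I$ under multiplication by multipliers, since $\al_*$ is only a generalized, not a genuine, inverse of $\al$.

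For the two ``in particular'' assertions I would argue as follows. Reversibility of $(A/I,\al_I)$: since $I$ is invariant we have $\al_*(I)\subseteq I$ by ii), so $\al_*$ descends to a positive map $(\al_I)_*(a+I):=\al_*(a)+I$ on $A/I$; applying the surjective, non-degenerate quotient map $q_I$ to the defining identities \eqref{b,,2} and \eqref{definition of complete} of $\al_*$, and using $\overline{q_I}(\overline{\al}(1))=\overline{\al_I}(1)$ (as $q_I$ intertwines and satisfies $\overline{q_I}(1)=1$), shows $(\al_I)_*$ is a complete transfer operator, whence $(A/I,\al_I)$ is reversible by Proposition \ref{Proposition on transfers}. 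For the lattice claim, I would use the symmetric characterization ii): if $I_1,I_2$ are invariant then both $\al$ and $\al_*$ map $I_1\cap I_2$ into $I_1\cap I_2$ and, by linearity and continuity, $I_1+I_2$ into $I_1+I_2$, so the invariant ideals are closed under the meet $I_1\cap I_2$ and join $I_1+I_2$ of the ambient lattice of all ideals, and therefore form a sublattice, in particular a lattice.
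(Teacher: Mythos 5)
Your proof is correct and takes essentially the same route as the paper's: both rest on the generalized-inverse identities $\al_*\circ\al(a)=\overline{\al}_*(1)a$ and $\al\circ\al_*(a)=\overline{\al}(1)a\overline{\al}(1)$ together with the fact that ideals absorb multipliers, and your hub arrangement (ii)$\Leftrightarrow$i), ii)$\Leftrightarrow$iii), ii)$\Leftrightarrow$iv)) is just a reshuffling of the paper's cycle ii)$\Rightarrow$iii)$\Rightarrow$iv)$\Rightarrow$ii), with your i)$\Leftrightarrow$ii) being an element-wise version of the paper's decomposition $\al^{-1}(I)=\ker\al\oplus\al_*(I)$. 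The only difference is that you supply details for the final claims (descending $\al_*$ to $A/I$ and exhibiting $I_1\cap I_2$, $I_1+I_2$ as meet and join), which the paper declares straightforward; your details are correct.
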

\begin{proof}
i)$\Leftrightarrow$ ii). By the form of the complete transfer operator, see \eqref{complete transfer operator form}, we have $\alpha^{-1}(I)=\ker\alpha\oplus \alpha_*(I)$. Thus negative invariance of $I$ is equivalent to the inclusion $\al_*(I)\subseteq I$.

ii)$\Rightarrow$ iii). Inclusion $\al(I)\subseteq I$ implies that $\al(I)\subseteq\overline{\al}(1)I\overline{\al}(1)$, while both of the inclusions in ii)  imply that $\overline{\al}(1)I\overline{\al}(1)=\alpha(\alpha_*(I))\subseteq I$. 

iii)$\Rightarrow$ iv). We have $\al_*(I)=\al_*(\overline{\al}(1)I\overline{\al}(1))=\al_*(\al(I))=\overline{\al}_*(1)I$.

iv)$\Rightarrow$ ii). We have $\al(I)=\al(\overline{\al}_*(1)I)=\alpha(\alpha_*(I))=\overline{\al}(1)I\overline{\al}(1)\subseteq I$ and  $\al_*(I)=\overline{\al}_*(1)I\subseteq I$.
\\
The remaining part of the assertion is now straightforward.
\end{proof}

Note that an ideal $I$ is $J$-invariant if and only if the system $(A/I,\al_I)$ is well defined and $q_I(J)\subseteq (\ker\al_I)^\bot$.
 Thus  $J$-invariant ideals correspond to    subsystems of $(A,\al)$ which `admit $J$-extensions'. 
Alternatively,   $J$-invariant ideals in $(A,\alpha)$ can be viewed as parts of  invariant ideals in $J$-extensions of $(A,\alpha)$.
\begin{lem}\label{proposition ktore winno byc lemma}
Let  $T$ be  a $J$-embedding of $(A,\alpha)$ into a certain $C^*$-dynamical system $(B,\beta)$. If $\I$ is an invariant ideal in in $(B,\beta)$ then 
\begin{equation}\label{obciecie idealu}
I:=T^{-1}(\I)
\end{equation}
is  a $J$-invariant ideal in $(A,\alpha)$. In particular, $T$ factors through to an $q_I(I')$-embedding
$(A/I,\alpha_I) \stackrel{T_{\I}}{\longrightarrow} (B/\I,\beta_{\I})$  where $I'$ 
is an  ideal in $A$ such that 
\begin{equation}\label{J-pair relations}
J\subseteq I' \quad \textrm{and}\quad  I'\cap \al^{-1}(I)=I.
\end{equation}
If $(B,\beta)$ is reversible then $I'=\{a\in A: (1-\overline{\beta}_*(1))T(a)\in \I\}$.
\end{lem}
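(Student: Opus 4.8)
The plan is to establish the four assertions in turn, pushing everything through $T$ and the intertwining relation $T\circ\al=\beta\circ T$. I would first verify that $I=T^{-1}(\I)$ is $J$-invariant. It is an ideal, being the preimage of one. Positive invariance is immediate: for $a\in I$ we have $T(\al(a))=\beta(T(a))\in\beta(\I)\subseteq\I$, so $\al(a)\in I$. For $J$-negative invariance take $a\in J\cap\al^{-1}(I)$. From $a\in J=T^{-1}((\ker\beta)^\bot)$ we get $T(a)\in(\ker\beta)^\bot$, while $\al(a)\in I$ gives $\beta(T(a))=T(\al(a))\in\I$, i.e. $T(a)\in\beta^{-1}(\I)$. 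Hence $T(a)\in(\ker\beta)^\bot\cap\beta^{-1}(\I)$, and the negative invariance of the (invariant) ideal $\I$ forces $T(a)\in\I$, that is $a\in I$.

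Next I would produce the factorisation. Since $\ker(q_{\I}\circ T)=T^{-1}(\I)=I$, the rule $T_{\I}(a+I):=T(a)+\I$ defines an injective homomorphism $A/I\to B/\I$; it intertwines $\al_I$ and $\beta_{\I}$, and is non-degenerate because $T(A)B=B$ descends to $T_{\I}(A/I)(B/\I)=B/\I$. Thus $T_{\I}$ is an embedding, and I define $I':=q_I^{-1}\big(T_{\I}^{-1}((\ker\beta_{\I})^\bot)\big)$, which makes $T_{\I}$ a $q_I(I')$-embedding by construction and gives $I\subseteq I'$. To render $I'$ usable I would unwind it: $\ker\beta_{\I}=\beta^{-1}(\I)/\I$, and the annihilator in $B/\I$ of an ideal $K/\I$ equals $\{x+\I:xK\subseteq\I\}$; taking $K=\beta^{-1}(\I)$ yields
$$
I'=\{a\in A:\ T(a)\,\beta^{-1}(\I)\subseteq\I\}.
$$

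The relations \eqref{J-pair relations} now follow. For $J\subseteq I'$, let $a\in J$, so $T(a)\in(\ker\beta)^\bot$; for any $y\in\beta^{-1}(\I)$ the product $T(a)y$ lies in the ideal $(\ker\beta)^\bot$ and satisfies $\beta(T(a)y)=\beta(T(a))\beta(y)\in\I$, hence $T(a)y\in(\ker\beta)^\bot\cap\beta^{-1}(\I)\subseteq\I$; so $a\in I'$. For $I'\cap\al^{-1}(I)=I$, the inclusion $\supseteq$ uses positive invariance of $I$ together with $I\subseteq I'$. Conversely, if $a\in I'\cap\al^{-1}(I)$ then $T(a)\in\beta^{-1}(\I)$ (from $\al(a)\in I$) and $T(a)\beta^{-1}(\I)\subseteq\I$ (from $a\in I'$); as $\beta^{-1}(\I)$ is selfadjoint we also have $T(a)^*\in\beta^{-1}(\I)$, whence $T(a)T(a)^*\in\I$, and the elementary fact that $xx^*\in\I$ implies $x\in\I$ gives $T(a)\in\I$, i.e. $a\in I$.

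Finally, in the reversible case I would use $\beta^{-1}(\I)=\ker\beta\oplus\beta_*(\I)$ (exactly as in the proof of Lemma \ref{helping lemma}), together with $\beta_*(\I)=\overline{\beta}_*(1)\I\subseteq\I$ from Lemma \ref{helping lemma}. Then $T(a)\beta_*(\I)\subseteq T(a)\I\subseteq\I$ is automatic, so the condition $T(a)\beta^{-1}(\I)\subseteq\I$ reduces to $T(a)\ker\beta\subseteq\I$; writing $\ker\beta=(1-\overline{\beta}_*(1))B$ with $1-\overline{\beta}_*(1)$ a central projection, and using that $\I$ is closed, this is equivalent to $(1-\overline{\beta}_*(1))T(a)\in\I$, the asserted description of $I'$. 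I expect the main obstacle to be the correct identification of $(\ker\beta_{\I})^\bot$ inside the quotient $B/\I$ (hence the explicit form of $I'$) and the disciplined use of the negative invariance of $\I$; the surrounding manipulations with central projections and the implication $xx^*\in\I\Rightarrow x\in\I$ are routine.
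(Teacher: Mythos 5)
Your proof is correct, and its first half --- positive and $J$-negative invariance of $I$ via the intertwining relation and the negative invariance $(\ker\beta)^\bot\cap\beta^{-1}(\I)\subseteq\I$ of $\I$, the factorisation $T_{\I}$, and the definition $I'=q_I^{-1}\bigl(T_{\I}^{-1}((\ker\beta_{\I})^\bot)\bigr)$ --- coincides with the paper's argument. The second half is executed differently. The paper stays inside the quotient: covariance of $T_{\I}$ on $q_I(I')$ holds by definition, the equality $I'\cap\al^{-1}(I)=I$ follows from $(\ker\beta_{\I})^\bot\cap\ker\beta_{\I}=\{0\}$ together with injectivity of $T_{\I}$, and the reversible-case formula is obtained by noting that $(B/\I,\beta_{\I})$ is again reversible (Lemma \ref{helping lemma}) and applying Lemma \ref{lemma describing ideal J} to $T_{\I}$. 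You instead pull everything back to $B$ through the explicit description $I'=\{a\in A:\ T(a)\beta^{-1}(\I)\subseteq\I\}$, prove $I'\cap\al^{-1}(I)\subseteq I$ with the hereditarity fact $xx^*\in\I\Rightarrow x\in\I$, and settle the reversible case with the decomposition $\beta^{-1}(\I)=\ker\beta\oplus\beta_*(\I)$ and the centrality of $1-\overline{\beta}_*(1)$. The two routes are of comparable length, but yours has a concrete payoff: it turns the inclusion $J\subseteq I'$ of \eqref{J-pair relations} into an explicit computation (for $a\in J$ and $y\in\beta^{-1}(\I)$ one has $T(a)y\in(\ker\beta)^\bot\cap\beta^{-1}(\I)\subseteq\I$), a point that the paper's proof compresses into ``we obtain the assertion''; the paper's route, in exchange, recycles Lemma \ref{lemma describing ideal J} and the stability of reversibility under quotients, which is the pattern it uses repeatedly elsewhere.
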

\begin{proof} Since $
T(\alpha(I))=\beta(T(I))=\beta(T(T^{-1}(\I)))\subseteq \beta(\I)\subseteq \I
$ we have  $\al(I)\subseteq I$. To show  the negative $J$-invariance of $I$   note first that
$$
T(\alpha^{-1}(I)) = T( (T\circ \alpha)^{-1}(\I))=T( (\beta\circ T)^{-1}(\I))=T (T^{-1} (\beta^{-1}(\I)))\subseteq \beta^{-1}(\I).
$$
Therefore by negative invariance of $\I$ we have
$$
T(J\cap \alpha^{-1}(I)) \subseteq (\ker\beta)^\bot\cap T(\alpha^{-1}(I))  \subseteq (\ker\beta)^\bot\cap \beta^{-1}(\I)\subseteq \I.
$$
Hence $J\cap \al^{-1}(I)\subseteq I$, and $I$ is $J$-invariant.   Plainly, $T_{\I}(a+I)=T(a)+\I$ defines an embedding of the subsystem $(A/I,\alpha_I)$ into  $(B/\I,\beta_{\I})$. By definition $T_{\I}$ is $T_{\I}^{-1}((\ker\beta_{\I})^\bot)$-covariant. Letting  $I':= q_I^{-1}(T_{\I}^{-1}((\ker\beta_{\I})^\bot))$ we obtain the assertion. In particular, since $\alpha^{-1}(I)=q_I^{-1}(\ker\alpha_I)=q_I^{-1}(T^{-1}_{\I}(\ker\beta_{\I}))$, the relations            
$$
J\cap \al^{-1}(I)\subseteq I, \qquad T_{\I}^{-1}((\ker\beta_{\I})^\bot) \cap \ker\alpha_I =\{0\} 
$$    
imply  \eqref{J-pair relations}.  
\\
Suppose now that $(B,\beta)$ is reversible and  note that by  Lemma \ref{helping lemma} also the subsystem $(B/\I,\beta_{\I})$ is reversible; its  complete transfer operator is given by $\beta_{*,\I}(b +\I)= \beta_*(b)+\I$, $b\in B$. Hence applying  Lemma \ref{lemma describing ideal J} to the embedding $T_{\I}$ we get 
\begin{align*}
a \in I' &\,\, \Longleftrightarrow \,\, q_{I}(a)\in T_{\I}^{-1}((\ker\beta_{\I})^\bot) \,\, \Longleftrightarrow \,\, \overline{\beta}_{*,\I}(1)T_{\I}(q_{I}(a))=T_{\I}(q_{I}(a)) 
\\
&\,\, \Longleftrightarrow \,\,  \overline{\beta}_{*}(1)T(a) +\I=T(a)  + \I \,\, \Longleftrightarrow \,\, (1-\overline{\beta}_{*}(1))T(a) \in \I.
\end{align*}  
\end{proof}
\begin{rem}\label{remark on J-pairs}
In general the ideal $I'$ above carries an essential data concerning the ideal $\I$, which is not visible from the standpoint of   $I$ and $J$. Indeed,  when $I$ is a  $J$-invariant ideal then $I+J$ is the minimal ideal satisfying \eqref{J-pair relations}. On one hand, in the favorable situation when $A=\ker\alpha \oplus (\ker\alpha)^\bot$ and $J=(\ker\alpha)^\bot$,  any ideal satisfying \eqref{J-pair relations} has to be of the form  $I'=I+J$  (as we then have $I'=(I'\cap \ker\alpha)\oplus(I'\cap (\ker\alpha)^\bot)\subseteq I + J$). On the other hand, as the following example shows when $\ker\alpha$ is not  complemented  in $A$   there might be ideals $I'$ satisfying  \eqref{J-pair relations} that are  strictly larger than $I+J$.
 \end{rem}
\begin{ex}\label{example on J-pairs}
Let $A=C([0,1])$ and $\alpha(a)=a\circ \varphi$ where $\varphi:[0,1]\to [0,1]$ is given by  $\varphi(x):=x/2$. Put $J=(\ker\alpha)^{\bot}=C_{0}([0,\frac{1}{2}))$ and  let $I$ be the ideal $C_0([0,1]\setminus K)$ where $K=\{0,1,\frac{1}{2},\frac{1}{4},\frac{1}{8},...\}$. Since  $\alpha(I)=C_0\left([0,1]\setminus \varphi^{-1}(K)\right)=I$ and $\alpha^{-1}(I)=C_0([0,1]\setminus \varphi(K))\subseteq I$,   the ideal $I$ is positively invariant and both of the ideals
$$
I'_1:=C_0([0,1)), \qquad I'_2:=C_0([0,1)\setminus\{1/2\})
$$
satisfy  relations \eqref{J-pair relations}. Actually these are the only such ideals. The only difference between $I'_1$ and $I'_2$ is `located' in  the boundary $\{1/2\}$ of the hull of $\ker\alpha$ (which is not empty because $\ker\alpha$ is not complemented in $A$).
\end{ex}
The above remark and example lead us to  the following definition.
\begin{defn}
Let $I, I', J$ be ideals in $A$ where $J\subseteq (\ker\al)^\bot$. We  say that $(I,I')$ is a \emph{$J$-pair} for $(A,\al)$  if 
$$ 
I  \textrm{ is positively invariant,}\quad  J\subseteq I' \quad \textrm{and}\quad  I'\cap \al^{-1}(I)=I.
$$
Note that then $I$ is automatically $J$-invariant. We equip  the set of $J$-pairs for $(A,\al)$ with a natural  partial order induced by inclusion: $(I_1, I_1') \subseteq (I_2,I_2')$ $\stackrel{def}\Longleftrightarrow$  $I_1\subseteq I_2$ and $I_1'\subseteq I_2'$.
\end{defn}
 $J$-pairs arise naturally from morphisms.
\begin{lem}\label{explaining J-pairs}
A morphism  $(A,\alpha)\stackrel{T}{\rightarrow} (B,\beta)$ is  covariant on $J\subseteq (\ker\alpha)^\bot$  if and only if  
$$
I:=\ker T \quad\textrm{and}\quad I':= T^{-1}((\ker\beta)^\bot)
$$
form a $J$-pair $(I,I')$.
\end{lem}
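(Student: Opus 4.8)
The plan is to observe that two of the three defining conditions of a $J$-pair hold automatically for the pair $(I,I')=(\ker T,\, T^{-1}((\ker\beta)^\bot))$ attached to \emph{any} morphism, so that the entire content of the asserted equivalence is carried by the single remaining condition $J\subseteq I'$, which is verbatim the definition of covariance on $J$. Thus I would reduce the lemma to a short bookkeeping argument rather than attack it head on.

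First I would verify that $I=\ker T$ is positively invariant for every morphism, independently of any covariance assumption: if $a\in\ker T$ then $T(\alpha(a))=\beta(T(a))=\beta(0)=0$, so $\alpha(a)\in\ker T$. This uses only the intertwining relation $T\circ\alpha=\beta\circ T$ that is built into the notion of a morphism in $\dyna$.

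Next I would check the intersection identity $I'\cap\alpha^{-1}(I)=I$, once again for an arbitrary morphism. The inclusion $I\subseteq I'\cap\alpha^{-1}(I)$ is immediate, since $\ker T\subseteq T^{-1}((\ker\beta)^\bot)$ (as $0\in(\ker\beta)^\bot$) and since the positive invariance just established gives $\ker T\subseteq\alpha^{-1}(\ker T)$. For the reverse inclusion the decisive point is the disjointness $(\ker\beta)^\bot\cap\ker\beta=\{0\}$: if $a\in I'\cap\alpha^{-1}(I)$, then on one hand $T(a)\in(\ker\beta)^\bot$ because $a\in I'$, while on the other hand $\beta(T(a))=T(\alpha(a))=0$ because $\alpha(a)\in I=\ker T$, so $T(a)\in\ker\beta$; hence $T(a)=0$ and $a\in\ker T=I$.

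Having shown that these two conditions are automatic, the equivalence collapses to the last requirement $J\subseteq I'$. By the definition of $I'$ this reads $J\subseteq T^{-1}((\ker\beta)^\bot)$, which is precisely the statement of Definition \ref{general def of covariance} that $T$ is covariant on $J$; both implications then follow simultaneously. I do not expect any genuine obstacle here: the only subtlety worth flagging is the recognition that positive invariance and the intersection identity require no covariance hypothesis, so that the whole force of the lemma is concentrated in the inclusion $J\subseteq I'$.
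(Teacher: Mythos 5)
Your proposal is correct and follows essentially the same route as the paper: both arguments show that positive invariance of $\ker T$ and the identity $I'\cap\alpha^{-1}(I)=I$ hold for an arbitrary morphism (via the intertwining relation and the fact that $(\ker\beta)^\bot\cap\ker\beta=\{0\}$), so that the equivalence reduces to the condition $J\subseteq I'$, which is the definition of covariance on $J$. Your write-up merely spells out a couple of steps the paper leaves implicit, such as why $I\subseteq I'$.
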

\begin{proof} Since $I=T^{-1}(\{0\})$ is positively invariant and $I\subseteq I'$ we have $I\subseteq I'\cap \al^{-1}(I)$. 
The reverse inclusion holds because for $a\in   I'\cap \al^{-1}(I)$ we have 
$T(a)\in  (\ker\beta)^\bot$ and $\beta(T(a))=T(\alpha(a))=0$, which implies that  $a\in \ker T=I$.
Hence $I'\cap \al^{-1}(I)=I$. Thus $(I,I')$ is a $J$-pair if and only if  $T$ is  covariant on $J$.

\end{proof}

\subsection{Invariant ideals in natural reversible extensions}\label{Invariant ideals in natural reversible extensions}
Now we are in a position to provide the description of invariant ideals in  natural reversible  extensions.
\begin{thm}\label{invariant ideals and J-pairs thm}
Let $(B,\beta)$ be a natural reversible $J$-extension of $(A,\al)$ and retain the identifications and notation from 
Remark \ref{identifications remark}. We have an order preserving bijection between $J$-pairs $(I,I')$ for $(A,\al)$  and invariant ideals $\I$ in $(B,\beta)$,  given by 
\begin{equation}\label{J-pair from invariant ideal}
I=\widetilde{I} \cap A, \qquad I'=\{a\in A: q a \in \widetilde{I}\}, 
\end{equation}
\begin{equation}\label{invariant ideal from J-pair}
 \I \cap B_n= \sum_{k=0}^{n-1} \beta_*^k(qI') + \beta_*^n(I), \qquad n\in \N.
\end{equation}
In particular, the set of $J$-pairs is a lattice. Moreover, denoting by $(B_{(I,I')},\beta_{(I,I')})$ the natural reversible $q_I(I')$-extension of the subsystem $(A_{I},\alpha_I)$ we have 
$$
(B/\I,\beta_{\I})\cong (B_{(I,I')},\beta_{(I,I')})
$$ 
where $\I$ is the invariant ideal corresponding to a $J$-pair $(I,I')$.
\end{thm}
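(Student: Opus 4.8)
The plan is to show that \eqref{J-pair from invariant ideal} and \eqref{invariant ideal from J-pair} define mutually inverse, order-preserving bijections, and then to read off the lattice property and the quotient identification from the tools already developed. Throughout I use the identifications of Remark \ref{identifications remark}, so that $A$ is a subalgebra of $B$, $q=1-\overline{\beta}_*(1)$ is a central projection in $M(B)$ (Proposition \ref{lemma for extensions of transfers}), $qB=qA$, and $B_n=\bigoplus_{k=0}^{n-1}\overline{\beta}_*^k(q)B\oplus\overline{\beta}_*^n(1)A$ is the decomposition by the pairwise orthogonal central projections $\overline{\beta}_*^k(q)=\overline{\beta}_*^k(1)-\overline{\beta}_*^{k+1}(1)$ and $\overline{\beta}_*^n(1)$. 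I will also use repeatedly that $\beta_*^k(qb)=\overline{\beta}_*^k(q)\beta_*^k(b)$, which holds because $q$ is central.

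That the passage $\I\mapsto(I,I')$ of \eqref{J-pair from invariant ideal} produces a $J$-pair is exactly Lemma \ref{proposition ktore winno byc lemma} (with $T$ the canonical embedding, and $(1-\overline{\beta}_*(1))T(a)=qa$). For the converse passage I would first record that, for a $J$-pair $(I,I')$, each space $\sum_{k=0}^{n-1}\beta_*^k(qI')+\beta_*^n(I)$ is an ideal of $B_n$, that these increase along the bonding maps $T_n$ (using $I\subseteq I'$ and positive invariance $\alpha(I)\subseteq I$), and, crucially, that $T_n^{-1}\big(\sum_{k=0}^{n}\beta_*^k(qI')+\beta_*^{n+1}(I)\big)=\sum_{k=0}^{n-1}\beta_*^k(qI')+\beta_*^n(I)$. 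This reflection property is where the negative invariance $I'\cap\alpha^{-1}(I)=I$ from \eqref{J-pair relations} is consumed: it forces $(I'\cap A_n)\cap\alpha^{-1}(I)=I\cap A_n$. Granting it, the closure $\I$ of the increasing union is an ideal of $B$ with $\I\cap B_n$ given by \eqref{invariant ideal from J-pair}, and $\I$ is invariant because $\beta_*$ and $\beta$ merely raise and lower the summand index by one, so Lemma \ref{helping lemma} applies.

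The core of the bijection is a single computation valid for every invariant ideal $\I$: with $I=\I\cap A$ and $I'=\{a\in A:qa\in\I\}$ one has $\overline{\beta}_*^k(q)\I=\beta_*^k(qI')$ for each $k$, and $\overline{\beta}_*^n(1)A\cap\I=\beta_*^n(I)$. The first identity follows from $\overline{\beta}_*^k(q)\I=\overline{\beta}_*^k(q)\beta_*^k(\I)=\beta_*^k(q\I)$ together with $\beta_*^k(\I)=\overline{\beta}_*^k(1)\I$ (invariance, Lemma \ref{helping lemma}) and $q\I=qA\cap\I=qI'$; the second follows by applying $\beta^n$, which carries the corner $\overline{\beta}_*^n(1)A$ isomorphically onto $A_n\subseteq A$ and maps $\I$ into $\I$, and then $\beta_*^n$ back. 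Decomposing $\I\cap B_n$ by the central projections then gives $\I\cap B_n=\sum_{k=0}^{n-1}\beta_*^k(qI')+\beta_*^n(I)$, so that $\I$ is completely determined by its $J$-pair; this yields injectivity of \eqref{J-pair from invariant ideal} and shows it is inverse to \eqref{invariant ideal from J-pair}. I expect this matching of the components — in particular keeping straight that the `bulk' data is carried by $I'$ while the tail is carried by $I$ (cf. Remark \ref{remark on J-pairs}) — to be the main obstacle.

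Order preservation is visible directly from both formulas, so the bijection transports the lattice of invariant ideals (Lemma \ref{helping lemma}) onto a lattice of $J$-pairs. Finally, for the quotient system, Lemma \ref{helping lemma} gives that $(B/\I,\beta_{\I})$ is reversible; equivariance of $q_{\I}$ together with $B=\sum_{n}\beta_*^n(A)$ give $B/\I=\sum_n\beta_{*,\I}^n(A/I)$; and Lemma \ref{proposition ktore winno byc lemma} identifies $T_{\I}$ as a $q_I(I')$-embedding of $(A/I,\alpha_I)$. These are precisely the defining properties of the natural reversible $q_I(I')$-extension in Remark \ref{identifications remark}, whence Theorem \ref{granica prosta zepsute twierdzenie}(iii) yields $(B/\I,\beta_{\I})\cong(B_{(I,I')},\beta_{(I,I')})$.
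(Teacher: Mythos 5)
Your overall strategy is the same as the paper's: one direction of the correspondence is Lemma \ref{proposition ktore winno byc lemma}; injectivity comes from decomposing $\I \cap B_n$ along the central projections; surjectivity comes from building $\I$ as the closure of the increasing union of the ideals $\I_n$, with negative invariance consumed exactly where you say. Your treatment of the quotient is actually a shortcut compared to the paper (which compares the two direct-limit diagrams explicitly): you verify the characterizing properties of Remark \ref{identifications remark} for $(B/\I,\beta_{\I})$ and invoke the uniqueness clause of Theorem \ref{granica prosta zepsute twierdzenie}; this is legitimate, granted the intertwining $q_{\I}\circ\beta_*=\beta_{*,\I}\circ q_{\I}$, which holds by Lemma \ref{proposition ktore winno byc lemma}. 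However, your core computation contains a concrete error: you identify the top summand of $B_n$ as $\overline{\beta}_*^n(1)A$, whereas by \eqref{direct sum decomposition} it is $\beta_*^n(A)=\beta_*^n(A_n)$, and these differ in general. Indeed, for $a\in A$ one has $\overline{\beta}_*^n(1)a=\beta_*^n(\beta^n(a))=\beta_*^n(\alpha^n(a))$, so $\overline{\beta}_*^n(1)A=\beta_*^n\big(\alpha^n(A)\big)$, while $\beta_*^n(A)=\beta_*^n\big(\overline{\alpha}^n(1)A\overline{\alpha}^n(1)\big)$; since $\beta_*^n$ is injective on the corner $\overline{\beta}^n(1)B\overline{\beta}^n(1)$, the two sets coincide only if $\alpha^n(A)=A_n$, which fails whenever the range of $\alpha$ is not all of the corner (e.g.\ $A=C([0,1])$, $\alpha(a)=a(0)1$, where $\alpha(A)=\C 1\subsetneq A=A_1$). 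Consequently, two of your statements are false as written: $\beta^n$ carries $\overline{\beta}_*^n(1)A$ onto $\alpha^n(A)$, \emph{not} onto $A_n$; and the identity $\overline{\beta}_*^n(1)A\cap\I=\beta_*^n(I)$ fails already for $\I=B$, where the left-hand side is $\beta_*^n(\alpha^n(A))$ and the right-hand side is $\beta_*^n(A_n)$. For the same reason your preamble decomposition $B_n=\bigoplus_{k=0}^{n-1}\overline{\beta}_*^k(q)B\oplus\overline{\beta}_*^n(1)A$ is in general a proper subalgebra of $B_n$.

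The gap is local and repairable: replace $\overline{\beta}_*^n(1)A$ throughout by the correct summand $\beta_*^n(A)=\beta_*^n(A_n)$, and prove instead that for $a\in A_n$ one has $\beta_*^n(a)\in\I$ if and only if $a\in I$. This follows by exactly the mechanism you intended, applied to the right space: if $\beta_*^n(a)\in\I$ then $a=\beta^n(\beta_*^n(a))\in\I\cap A=I$ (positive invariance of $\I$ and $\overline{\beta}^n(1)a\overline{\beta}^n(1)=a$ for $a\in A_n$), and conversely $a\in I\cap A_n\subseteq\I$ gives $\beta_*^n(a)\in\beta_*^n(\I)\subseteq\I$ by Lemma \ref{helping lemma}. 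Combined with your identity $\overline{\beta}_*^k(q)\I=\beta_*^k(qI')$ (which is correct, including the step $q\I=qA\cap\I=qI'$), this yields \eqref{invariant ideal from J-pair}, and the remainder of your argument — the reflection property for surjectivity, order preservation, and the quotient identification — then stands.
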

\begin{proof} 
Let $\I$ be an invariant ideal in $(B,\beta)$.  Lemma  \ref{proposition ktore winno byc lemma} tells us  that \eqref{J-pair from invariant ideal} defines a $J$-pair $(I,I')$. To show that $\I$ is determined by $(I,I')$  it suffices to verify \eqref{invariant ideal from J-pair}. 
But the latter is simple. Indeed, taking into account the direct sum decomposition \eqref{direct sum decomposition}, we get that  the sum $\sum_{k=0}^{n-1} \beta_*^k(qa_k) + \beta_*^n(a_n)$, where $a_k\in A_k$, is in $\I$ if and only if  its summands are in $\I$.  By Lemma \ref{helping lemma}  we have $\beta(\I)\subseteq \I$ and $\beta_*(\I)\subseteq \I$. Hence we see  that  $\beta_*^k(qa_k)$ is in $\I$ if and only if $a_k \in I'$.  Similarly $\beta_*^n(a_n)$ is in $\I$ if and only if $a_n \in I$.   

Accordingly, we have shown that \eqref{J-pair from invariant ideal} yields an injective map from the set of invariant ideals in $(B,\beta)$ to the set of $J$-pairs for $(A,\alpha)$. To prove it is surjective let $(I,I')$ be an arbitrary $J$-pair for $(A,\alpha)$ and view $(B,\beta)$ as the direct limit of \eqref{ciag prosty C*-algebr}. Put $I_k:=\overline{\alpha}^{k}(1)I\overline{\alpha}^{k}(1)$ and $I_k':=\overline{\alpha}^{k}(1)I'\overline{\alpha}^{k}(1)$, $k\in \N$. For each  $n\in \N$,  the formula
$$ \I_n:=q(I_0')\oplus q(I_{1}')\oplus ... \oplus q(I'_{n-1}) \oplus I_n 
$$ 
defines an ideal in $B_n$ (recall that $q:A\to A/J$ is the quotient map). Relations saying that $(I,I')$ is a $J$-pair  imply that $\I_{n+1}\cap T_n(B_n)=T_n(\I_n)$,  $n\in \N$. Hence with the identifications from Remark \ref{identifications remark}, we deduce that the closure $\I$ of the ascending sum $\bigcup_{n\in \N} \I_n$  is an ideal in $B$ satisfying \eqref{J-pair from invariant ideal} and \eqref{invariant ideal from J-pair}. We still need to show that $\I$ is invariant in $(B,\beta)$. Using 
description of $\beta_*$ given in the proof of Theorem \ref{granica prosta zepsute twierdzenie} we get 
$$
\beta_*(\I_n)=0\oplus q(I_1')\oplus q(I_{2}')\oplus ... \oplus q(I'_{n}) \oplus I_{n+1} \subseteq \I_{n+1}.
$$
Therefore $\beta_*(\I)\subseteq \I$. Similarly, we have $\beta(\I)\subseteq \I$ because 
  $$
\beta(\I_{n+1})= q(I_1')\oplus q(I_{2}')\oplus ... \oplus q(I'_{n}) \oplus I_{n+1} \subseteq \I_{n}.
$$
Consequently,  $\I$ is invariant by Lemma \ref{helping lemma}.
 This shows the  bijective correspondence between $J$-pairs in $(A,\alpha)$ and invariant ideals in $(B,\beta)$. Plainly, it  preserves the order. In particular, by Lemma \ref{helping lemma} both the set of invariant ideals in $(B,\beta)$ and the set of $J$-pairs in $(A,\alpha)$ are lattices.

Now  fix an  ideal $\I$ invariant in $(B,\beta)$ and let $(I,I')$ be the corresponding $J$-pair. 
Let $(B^{\I},\beta^{\I})$  be the direct limit  of the direct sequence 
$$
(B_0^{\I},\beta_0^{\I}) \stackrel{T_0^{\I}}{\longrightarrow} (B_1^{\I},\beta_1^{\I})
\stackrel{T_1^{\I}}{\longrightarrow} (B_2^{\I},\beta_2^{\I})  \stackrel{T_2^{\I}}{\longrightarrow}...\,
$$  where 
$$
B_n^{\I}:=q_{I'}(A_{0})\oplus q_{I'}(A_{1})\oplus ... \oplus q_{I'}(A_{n-1}) \oplus q_{I}(A_{n}), 
$$
$$
\beta_n^{\I}(q_{I'}(a_{0})\oplus  ... \oplus q_{I'}(a_{n-1})\oplus q_{I}(a_{n})):=q_{I'}(a_{1})\oplus ... \oplus q_{I'}(a_{n})\oplus q_I(\al(a_n))
$$
and
$$
T_n^{\I}(q_{I'}(a_{0})\oplus  ... \oplus q_{I'}(a_{n-1})\oplus q_{I}(a_{n}))= q_{I'}(a_{0})\oplus ... \oplus q_{I'}(a_{n})\oplus q_I(\al(a_n))
$$
for $a_k \in A_k$, $k=0,...,n$.
Note that, as $\alpha(I)\subseteq I$ and $I\subseteq I'$,  the  $C^*$-dynamical systems $(B_n^{\I},\beta^{\I}_n)$ and their bonding maps $T_n^{\I}$ are well defined. 

Since $I, J\subseteq I'$ we have the natural isomorphisms 
\begin{equation}\label{natural isomorphisms}
(A/J)/q(I') \cong  A/I' \cong (A/I)/q_I(I').
\end{equation}
It is not hard to convince yourself  that they induce natural equivalences
$$
 (B/\I,\beta_{\I})\cong (B^{\I},\beta^{\I})\cong (B_{(I,I')},\beta_{(I,I')}).
$$
Indeed, let $
(B_0^{(I,I')},\beta_0^{(I,I')}) \stackrel{T_0^{(I,I')}}{\longrightarrow} (B_1^{(I,I')},\beta_1^{(I,I')})
\stackrel{T_1^{(I,I')}}{\longrightarrow} (B_2^{(I,I')},\beta_2^{(I,I')})  \stackrel{T_2^{(I,I')}}{\longrightarrow}...\,
$ be the direct sequence defining  $(B_{(I,I')},\beta_{(I,I')})$ and retain the notation form the first part of the proof. 
Then the isomorphisms \eqref{natural isomorphisms} extend (by direct sums and restrictions) to isomorphisms making the following diagram commutative
$$
\begin{xy}
\xymatrix@C=3pt{
    B_n^{\I} \ar[d]^{T_n^{\I}}  &  \cong  &    B_n/\I_n  \cong q_{\I}(B_n)\cong B_n/\I_n \ar[d]^{T_n}&  \cong  &  B_n^{(I,I')} \ar[d]^{T_n^{(I,I')}}   \\
    B_{n+1}^{\I}   &  \cong  &     B_{n+1}/\I_{n+1}  \cong q_{\I}(B_{n+1})\cong B_{n+1}/\I_{n+1} & = &  B_n^{(I,I')}
        }
  \end{xy}.
$$
The horizontal isomorphisms establish equivalences $(B_n^{\I},\beta^{\I}_n)\cong (B_n/\I_n, (\beta_n)_{\I_n})\cong (B_1^{(I,I')},\beta_1^{(I,I')})$ which imply equivalences of the limiting systems. 
\end{proof}
\begin{cor}\label{complemented kernel corollary}
Suppose $\ker\al$ is a complemented ideal in $A$ and let $(B,\beta)$ be a natural reversible extension of $(A,\al)$. We have an order preserving bijection between  the invariant ideals  in systems $(A,\al)$  and $(B,\beta)$.  It is given by the relations 
\begin{equation}\label{bijection between A -generated and J-invariant}
I=\I \cap A, \qquad \quad \I=\sum_{n=0}^\infty \beta_*^n(I).
\end{equation}
Moreover, the subsystem $(B/\I,\beta_{\I})$  corresponding to $\I$ is  equivalent to the natural reversible extension of the subsystem $(A/I,\alpha_I)$ corresponding to $I$. 
\end{cor}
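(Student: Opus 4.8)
The plan is to deduce the corollary from Theorem~\ref{invariant ideals and J-pairs thm} by showing that, under the complementation hypothesis, the second coordinate of a $J$-pair carries no information. Here $(B,\beta)$ is the natural reversible $J$-extension for $J=(\ker\al)^\bot$, so Theorem~\ref{invariant ideals and J-pairs thm} already supplies an order preserving bijection between $J$-pairs $(I,I')$ for $(A,\al)$ and invariant ideals $\I$ in $(B,\beta)$. Thus it suffices to prove that $(I,I')\mapsto I$ is an order isomorphism from the lattice of $J$-pairs onto the lattice of invariant ideals in $(A,\al)$; composing it with the bijection of Theorem~\ref{invariant ideals and J-pairs thm} yields the asserted correspondence, after which it remains only to rewrite the two defining formulas and to verify the statement about quotient systems.

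The key step is the rigidity of $I'$ anticipated in Remark~\ref{remark on J-pairs}: under $A=\ker\al\oplus(\ker\al)^\bot$ and $J=(\ker\al)^\bot$ every $J$-pair satisfies $I'=I+J$. First I would record that any ideal of $A$ splits along the direct sum of ideals $A=\ker\al\oplus(\ker\al)^\bot$, so $I=(I\cap\ker\al)\oplus(I\cap(\ker\al)^\bot)$ and similarly for $I'$. From $J\subseteq I'$ we get $I'\cap(\ker\al)^\bot=(\ker\al)^\bot$, whereas $\ker\al\subseteq\al^{-1}(I)$ together with $I'\cap\al^{-1}(I)=I$ forces $I'\cap\ker\al=I\cap\ker\al$; combining the splittings gives $I'=(I\cap\ker\al)\oplus(\ker\al)^\bot=I+J$. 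Conversely, given any invariant (equivalently $J$-invariant, since $J=(\ker\al)^\bot$) ideal $I$, I would check that $(I,I+J)$ is a $J$-pair, i.e. $(I+J)\cap\al^{-1}(I)=I$, using positive invariance for the easy inclusion and the identity $\al^{-1}(I)\cap(\ker\al)^\bot=I\cap(\ker\al)^\bot$ (a restatement of negative $J$-invariance) for the reverse one. These two facts make $(I,I')\mapsto I$ a bijection onto the invariant ideals of $(A,\al)$, and order preservation in both directions is then immediate. I expect this rigidity argument to be the main obstacle, since it is the only place where the complementation hypothesis is genuinely used and where one must handle carefully the interplay between the splitting of ideals and negative invariance.

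Next I would convert \eqref{J-pair from invariant ideal}--\eqref{invariant ideal from J-pair} into \eqref{bijection between A -generated and J-invariant}. The formula $I=\I\cap A$ is inherited verbatim. For the second one I would identify, via Remark~\ref{identifications remark} and Proposition~\ref{lemma for extensions of transfers}, the restriction of the projection $q=1-\overline{\beta}_*(1)$ to $A$ as the projection onto $\ker\al$ along $(\ker\al)^\bot$: indeed $\ker\al\subseteq\ker\beta$ yields $\overline{\beta}_*(1)|_{\ker\al}=0$, while $J=(\ker\al)^\bot\subseteq(\ker\beta)^\bot$ yields $\overline{\beta}_*(1)|_J=\mathrm{id}$. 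Hence $qI'=qI=I\cap\ker\al\subseteq I$. Plugging this into \eqref{invariant ideal from J-pair} gives $\beta_*^k(qI')\subseteq\beta_*^k(I)$ for $k<n$, so $\I\cap B_n\subseteq\sum_{k=0}^{n}\beta_*^k(I)$, while the last summand $\beta_*^n(I)$ of \eqref{invariant ideal from J-pair} already lies in $\I$; passing to the closure of the increasing union over $n$ then gives both inclusions and hence $\I=\sum_{n=0}^\infty\beta_*^n(I)$.

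Finally, for the quotient statement I would invoke the last clause of Theorem~\ref{invariant ideals and J-pairs thm}, which identifies $(B/\I,\beta_\I)$ with the natural reversible $q_I(I')$-extension of $(A/I,\al_I)$, and then show $q_I(I')=(\ker\al_I)^\bot$ so that this becomes the unrelative natural reversible extension. Since $I'=I+J$ and $q_I(I)=0$, we have $q_I(I')=q_I((\ker\al)^\bot)$. Using $\al^{-1}(I)=\ker\al\oplus(I\cap(\ker\al)^\bot)$ one sees that $A/I$ splits as the direct sum of the ideals $\ker\al_I\cong\ker\al/(I\cap\ker\al)$ and $q_I((\ker\al)^\bot)$; as the annihilator of one summand of a direct sum of ideals is the other summand, $(\ker\al_I)^\bot=q_I((\ker\al)^\bot)=q_I(I')$, as required. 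This simultaneously re-establishes that $\ker\al_I$ is complemented in $A/I$, keeping the statement internally consistent.
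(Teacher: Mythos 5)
Your proposal is correct and follows essentially the same route as the paper: the paper's proof likewise invokes Theorem~\ref{invariant ideals and J-pairs thm}, the rigidity $I'=I+(\ker\al)^\bot$ from Remark~\ref{remark on J-pairs}, the observation that $q=1-\overline{\beta}_*(1)$ projects $A$ onto $\ker\al$ (so $qI'\subseteq I$ collapses \eqref{invariant ideal from J-pair} to $\I=\sum_{n=0}^\infty\beta_*^n(I)$), and the identity $q_I(I')=q_I((\ker\al)^\bot)=(\ker\al_I)^\bot$ for the quotient statement. You merely spell out the details that the paper delegates to the remark and to the reader (both directions of the rigidity argument, the $J$-pair property of $(I,I+J)$, and the annihilator computation in $A/I$), all of which check out.
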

\begin{proof} Within the notation of Theorem \ref{invariant ideals and J-pairs thm} we necessarily  have  $I'=I+(\ker\alpha)^\bot$ (see Remark \ref{remark on J-pairs}) and $q=1-\overline{\beta}_*(1)$ projects $A$ onto $\ker\alpha$. Therefore $qI'=I\cap \ker\alpha\subseteq I$, and consequently formula \eqref{invariant ideal from J-pair}
 implies that $\I=\sum_{n=0}^\infty \beta_*^n(I)$. Since $q_I(I')=q_I((\ker\alpha)^\bot)=(\ker\alpha_I)^\bot$ the second part of the assertion follows from Theorem \ref{invariant ideals and J-pairs thm}.
\end{proof}
\begin{rem}\label{remark on A-generated ideal in B}
For an arbitrary $C^*$-dynamical system $(A,\alpha)$ and an ideal $J$ in $(\ker\alpha)^\bot$ relations \eqref{bijection between A -generated and J-invariant} establish an order preserving bijection between the $J$-invariant ideals in $(A,\alpha)$ and invariant ideals in $(B,\beta)$ that are generated by their intersection with $A$. Indeed, the latter ideals correspond via bijection from Theorem \ref{invariant ideals and J-pairs thm}  to $J$-pairs of the form $(I,I+J)$ and as $q(I+J)=qI$ the formula \eqref{invariant ideal from J-pair}  reduces to  $\I=\sum_{n=0}^\infty \beta_*^n(I)$.
\end{rem}
It is not an immediate fact  that the set of $J$-invariant ideals in $(A,\alpha)$ is a lattice.
\begin{cor}\label{corollary}
For any $C^*$-dynamical system $(A,\alpha)$ and any ideal $J$ in $(\ker\alpha)^\bot$ the set of $J$-invariant ideals in $(A,\alpha)$ is a lattice.
\end{cor}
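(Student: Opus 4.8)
The plan is to argue directly from the definition of $J$-invariance rather than transporting the lattice structure through the $J$-pair correspondence, exploiting the elementary order-theoretic fact that a partially ordered set which admits arbitrary infima and possesses a greatest element is automatically a complete lattice, and hence in particular a lattice. Concretely, I would verify just two things: that $A$ itself is a $J$-invariant ideal of $(A,\alpha)$, and that an arbitrary intersection of $J$-invariant ideals is again $J$-invariant. Granting these, the meet of a family is its intersection, while the supremum of two $J$-invariant ideals $I_1,I_2$ is obtained as the intersection of the (nonempty, since it contains $A$) family of all $J$-invariant ideals containing $I_1$ and $I_2$.

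The two ingredients are quickly checked. That $A$ is $J$-invariant is immediate, since $\alpha(A)\subseteq A$ and $J\cap\alpha^{-1}(A)=J\subseteq A$. For the intersection, let $\{I_\lambda\}$ be a family of $J$-invariant ideals and put $I:=\bigcap_\lambda I_\lambda$, which is again a closed two-sided ideal. Positive invariance passes to $I$ pointwise: if $a\in I$ then $\alpha(a)\in\alpha(I_\lambda)\subseteq I_\lambda$ for every $\lambda$, so $\alpha(a)\in I$. For the $J$-negative invariance I would use that taking preimages commutes with intersections, $\alpha^{-1}(I)=\bigcap_\lambda\alpha^{-1}(I_\lambda)$, so that
\[
J\cap\alpha^{-1}(I)=\bigcap_\lambda\bigl(J\cap\alpha^{-1}(I_\lambda)\bigr)\subseteq\bigcap_\lambda I_\lambda=I,
\]
each factor being contained in $I_\lambda$ by $J$-negative invariance of $I_\lambda$. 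Thus $I$ is $J$-invariant.

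With these facts the conclusion is formal: the meet of $I_1,I_2$ is $I_1\cap I_2$, and the join is the smallest $J$-invariant ideal above $I_1$ and $I_2$, furnished by the intersection above. The only genuine obstacle — and the reason the statement is \emph{not} an immediate fact, as the sentence preceding Corollary~\ref{corollary} warns — is precisely the join: the naive candidate, the closed algebraic sum $I_1+I_2$, is positively invariant but need not be $J$-negatively invariant, since $\alpha^{-1}$ does not distribute over sums of ideals, so one is forced to pass to the generated $J$-invariant ideal rather than the sum itself. The intersection-completeness argument sidesteps this cleanly. One could alternatively try to import the lattice structure on the invariant ideals of $(B,\beta)$ (Lemma~\ref{helping lemma}) via the bijection of Remark~\ref{remark on A-generated ideal in B}, but since the join in $(B,\beta)$ of two invariant ideals generated by their intersection with $A$ need not again be generated by its intersection with $A$, that route requires extra bookkeeping and the direct argument is more transparent.
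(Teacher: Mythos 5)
Your proof is correct, but it takes a genuinely different route from the paper's. You argue purely order-theoretically: $A$ itself is $J$-invariant, and an arbitrary intersection of $J$-invariant ideals is again $J$-invariant (the key point being that $\alpha^{-1}$ commutes with intersections, so $J\cap\alpha^{-1}\bigl(\bigcap_\lambda I_\lambda\bigr)=\bigcap_\lambda\bigl(J\cap\alpha^{-1}(I_\lambda)\bigr)\subseteq\bigcap_\lambda I_\lambda$); a poset with a greatest element and arbitrary meets is a complete lattice, with the join of $I_1,I_2$ realized as the intersection of all $J$-invariant ideals containing both. The paper instead transports the lattice structure through the $J$-pair machinery: by Theorem \ref{invariant ideals and J-pairs thm} the $J$-pairs form a lattice (being order isomorphic to the invariant ideals of the natural reversible $J$-extension $(B,\beta)$), and the join of $I_1,I_2$ is extracted as the first component of the join $J$-pair $(I_1,I_1+J)\vee(I_2,I_2+J)$, followed by a minimality check against an arbitrary $J$-invariant ideal $K$ via the pair $(K,K+J)$. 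Your argument buys economy and a little extra generality: it is self-contained, needs none of the reversible-extension apparatus, and actually yields a \emph{complete} lattice, which the paper's statement does not record. The paper's route buys coherence with its surrounding structure: realizing the join of $J$-invariant ideals as a join of $J$-pairs keeps the lattice of $J$-invariant ideals explicitly embedded, as the pairs $(I,I+J)$, in the lattice that Theorem \ref{lattices descriptions main thm} later matches with gauge-invariant ideals of $C^*(A,\alpha,J)$, in line with the diagram of Figure \ref{lattice diagram}. Your closing observation — that the closed sum $I_1+I_2$ is the naive join candidate but can fail $J$-negative invariance, forcing one to pass to the generated $J$-invariant ideal — correctly identifies why the paper flags the statement as not immediate.
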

\begin{proof}
Let $I_1$ and $I_2$ be $J$-invariant ideals. It is evident that the intersection $I_1\cap I_2$  is a $J$-invariant ideal, which yields a natural meet operation.  For the existence of join operation note that  $(I_1,I_1+J)$ and $(I_2,I_2+J)$ are $J$-pairs,   and therefore by Theorem \ref{invariant ideals and J-pairs thm}  there exists the join $J$-pair:
$$
(I,I')=(I_1,I_1+J) \vee (I_2,I_2+J).
$$
Plainly, $I$ is a $J$-invariant ideal containing $I_1$ and $I_2$. Moreover, if  $I_1,I_2 \subseteq K$ for a certain  $J$-invariant ideal $K$ then
$$(I_1,I_1+J), (I_2,I_2+J) \subseteq (K, K+J)\, \Longrightarrow \, (I,I')\subseteq (K, K+J) \, \Longrightarrow  I\subseteq K.$$
That is, $I$ is the minimal $J$-invariant ideal containing $I_1, I_2$. Hence $J$-invariant ideals form a lattice.
\end{proof}
Another consequence of Theorem \ref{invariant ideals and J-pairs thm} is that the universal property of natural reversible $J$-extensions described in item ii) of Theorem \ref{granica prosta zepsute twierdzenie} can be extended to not necessarily injective morphisms.
\begin{cor}\label{corollary for universality}
Let $(B,\beta)$ be a natural reversible  $J$-extension of $(A,\al)$. If    $(A,\alpha)\stackrel{S}{\rightarrow}(C,\gamma)$ is a morphism covariant on  $J$ and $(C,\gamma)$ is reversible then $S$ extends uniquely to a covariant morphism $(B,\beta)\stackrel{\S}{\rightarrow}(C,\gamma)$.
\end{cor}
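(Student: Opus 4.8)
The plan is to reduce the statement to the injective case already settled in Theorem~\ref{granica prosta zepsute twierdzenie}~ii), by factoring $S$ through its kernel and then invoking Theorem~\ref{invariant ideals and J-pairs thm}, exactly in the spirit of Remark~\ref{remark on A-generated ideal in B}. First I would form the pair $I:=\ker S$ and $I':=S^{-1}((\ker\gamma)^\bot)$; by Lemma~\ref{explaining J-pairs} this is a $J$-pair for $(A,\al)$. Since $I=\ker S$, the morphism $S$ factors as $S=S_I\circ q_I$ for a necessarily injective morphism $(A/I,\al_I)\stackrel{S_I}{\rightarrow}(C,\gamma)$. A direct check, using $I\subseteq I'=S^{-1}((\ker\gamma)^\bot)$, gives $S_I^{-1}((\ker\gamma)^\bot)=q_I(I')$, so $S_I$ is a $q_I(I')$-covariant embedding and $(C,\gamma)$ is a reversible $q_I(I')$-extension of $(A/I,\al_I)$.

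Next I would feed the $J$-pair $(I,I')$ into Theorem~\ref{invariant ideals and J-pairs thm}. This yields the invariant ideal $\I$ in $(B,\beta)$ together with the crucial equivalence $(B/\I,\beta_{\I})\cong(B_{(I,I')},\beta_{(I,I')})$, where $(B_{(I,I')},\beta_{(I,I')})$ is the natural reversible $q_I(I')$-extension of $(A/I,\al_I)$. Because $S_I$ is a $q_I(I')$-covariant embedding into the reversible system $(C,\gamma)$, Theorem~\ref{granica prosta zepsute twierdzenie}~ii) provides a unique covariant embedding $\widetilde{S_I}:(B_{(I,I')},\beta_{(I,I')})\to(C,\gamma)$ satisfying $\widetilde{S_I}\circ T_{(I,I')}=S_I$, where $T_{(I,I')}$ is the canonical embedding of $A/I$ into its reversible extension. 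Transporting $\widetilde{S_I}$ along the equivalence above, I regard it as a covariant embedding $(B/\I,\beta_{\I})\to(C,\gamma)$ and set $\S:=\widetilde{S_I}\circ q_{\I}$, where $q_{\I}:(B,\beta)\to(B/\I,\beta_{\I})$ is the quotient morphism.

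It then remains to verify that $\S$ is a covariant morphism extending $S$. Covariance of $q_{\I}$ follows from Lemma~\ref{helping lemma} (so that $(B/\I,\beta_{\I})$ is reversible with $\beta_{*,\I}\circ q_{\I}=q_{\I}\circ\beta_*$) together with Proposition~\ref{lemma for reversibles}; composing the covariant morphism $q_{\I}$ with the covariant embedding $\widetilde{S_I}$ preserves covariance by Lemma~\ref{composition of embeddings}, so $\S$ is covariant. The identity $\S\circ T=S$ comes down to the compatibility $q_{\I}\circ T=T_{(I,I')}\circ q_I$ under the equivalence of Theorem~\ref{invariant ideals and J-pairs thm}, whence $\S\circ T=\widetilde{S_I}\circ T_{(I,I')}\circ q_I=S_I\circ q_I=S$. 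Finally, uniqueness is obtained from the spanning property $B=\sum_{n=0}^\infty\beta_*^n(T(A))$ of Theorem~\ref{granica prosta zepsute twierdzenie}~i): any covariant $\S'$ with $\S'\circ T=S$ satisfies $\gamma_*\circ\S'=\S'\circ\beta_*$ (Proposition~\ref{lemma for reversibles}), so that $\S'(\beta_*^n(T(a)))=\gamma_*^n(S(a))$ is forced, and continuity pins $\S'$ down.

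I expect the only genuinely delicate point to be the compatibility $q_{\I}\circ T=T_{(I,I')}\circ q_I$: one must read off from the construction of the equivalence in Theorem~\ref{invariant ideals and J-pairs thm} that the identifications respect the distinguished zeroth-level copy of $A$ sitting inside $B$. Everything else is a matter of assembling results already established, together with the elementary observations that $S$ factors through $A/\ker S$ and that quotients by invariant ideals are covariant.
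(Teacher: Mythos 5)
Your proposal is correct and follows essentially the same route as the paper's proof: form the $J$-pair $(I,I')=(\ker S,\,S^{-1}((\ker\gamma)^\bot))$ via Lemma~\ref{explaining J-pairs}, factor $S$ through a $q_I(I')$-embedding $S_I$ of $(A/I,\alpha_I)$ into $(C,\gamma)$, invoke Theorem~\ref{granica prosta zepsute twierdzenie}~ii) together with the equivalence $(B/\I,\beta_{\I})\cong(B_{(I,I')},\beta_{(I,I')})$ from Theorem~\ref{invariant ideals and J-pairs thm}, and set $\S=\widetilde{S_I}\circ q_{\I}$. The only difference is cosmetic: you spell out the covariance of $q_{\I}$, the compatibility $q_{\I}\circ T=T_{(I,I')}\circ q_I$, and a uniqueness argument via $B=\sum_{n=0}^\infty\beta_*^n(T(A))$ and $\gamma_*\circ\S'=\S'\circ\beta_*$ (which is exactly the argument the paper uses inside Theorem~\ref{granica prosta zepsute twierdzenie}~ii)), whereas the paper compresses these steps, phrasing uniqueness through the fact that $\ker\S=\I$ is determined by $(I,I')$.
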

\begin{proof}
By Lemma \ref{explaining J-pairs},  $I:=\ker S$ and $I':= S^{-1}((\ker\gamma)^\bot)$ form a $J$-pair for $(A,\alpha)$. Clearly,   $S$ factors through to a $q(I')$-embedding $(A/I,\alpha_I)\stackrel{S_I}{\rightarrow}(C,\gamma)$. Hence by  Theorem \ref{granica prosta zepsute twierdzenie}, using the  notation and equivalence from Theorem \ref{invariant ideals and J-pairs thm}, $S_I$ extends uniquely to the covariant embedding  $(B/\I,\beta_{\I})\cong (B_{(I,I')},\beta_{(I,I')})  \stackrel{\S_I}{\rightarrow} (C,\gamma)$.  
Thus $\S=\S_I\circ q_{\I}$ defines the desired extended morphism. It is unique because the  invariant ideal $\I=\ker\S$ is   uniquely determined by $(I,I')$.
\end{proof}

\section{Representations of $C^*$-dynamical systems. Crossed products}\label{crossed products section}
We adopt the  point of view that crossed products are $C^*$-algebras encoding representation theory of $C^*$-dynamical systems. A fundamental remark for our analysis  is that any representation of $(A, \alpha)$ can be viewed as a morphism to a reversible $C^*$-dynamical system. Thus the notion of covariance  transfers  naturally from morphisms to representations. This leads us to the introduction of relative crossed products $C^*(A,\alpha,J)$.
\subsection{Covariant representations and morphisms to reversible systems}\label{covariant subsection}

Representations of $(A,\alpha)$ are defined in the following straightforward manner, cf. \cite[Definition 1.1]{kwa-leb}.
\begin{defn}\label{kowariant  rep defn*}
A \emph{Representation} of  a $C^*$-dynamical system $(A,\alpha)$ on a Hilbert space $H$  is a pair $(\pi,U)$
where $\pi:A\to \B(H)$ is a non-degenerate representation and $U\in \B(H)$ is such that 
\begin{equation}\label{covariance rel1*}
U\pi(a)U^* =\pi(\alpha(a)),\qquad a \in A.
\end{equation}
If $\pi$ is injective we say $(\pi,U)$ is a \emph{faithful}. We denote by $C^*(\pi,U)$ the $C^*$-algebra  generated by $\pi(A)\cup\pi(A)U$ and refer to it as to the $C^*$-algebra generated by $(\pi,U)$.
 \end{defn}
\begin{rem}\label{remark for one proof and for Bialorusy}
One can show (see Proposition \ref{on existence of crossed products} below) that  if  $(\pi,U)$ is a representation of $(A,\alpha)$ then $\pi:A \to C^*(\pi,U)$ is a non-degenerate homomorphism and  $U\in M(C^*(\pi,U))$. Thus  we could consider abstract representations $(\pi,U)$ of $(A,\alpha)$ where 
 $\pi:A\to C$ is a non-degenerate homomorphism into a $C^*$-algebra $C$ and $U\in M(C)$ is such that \eqref{covariance rel1*} holds. But then composing $\pi$ with any non-degenerate representation of $C$ on  $H$  we get back to Definition \ref{kowariant  rep defn*}.
 \end{rem}
It is a remarkable consequence of multiplicativity of $\alpha$ that  any representation of $(A,\alpha)$ defines a morphism to a reversible $C^*$-dynamical system.
 \begin{prop}\label{iteration of representations}
Let  $(\pi,U)$ be  a representation of   $(A,\alpha)$ in a $C^*$-algebra $C$. Then $U$ is a (power) partial isometry, 
$$
B:=\sum_{n=0}^\infty U^{*n}\pi(A)U^n
$$
is a $C^*$-algebra and 
\begin{equation}\label{relations for concrete reversibles}
U BU^*\subseteq B, \qquad U^*BU\subseteq B, \qquad U^*U\in B'
\end{equation}
where $B'$ is the commutant of $B$.
In particular,  putting $\beta(\cdot):=U(\cdot) U^*$ we get that $(B,\beta)$  is a reversible $C^*$-dynamical system with the complete transfer operator given by $\beta_*(\cdot):=U^*(\cdot) U$.
\end{prop}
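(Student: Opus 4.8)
The plan is to work in the multiplier algebra, extending $\pi$ to the strictly continuous unital homomorphism $\overline\pi:M(A)\to M(C)$ and treating $U,U^*\in M(C)$. First I would identify $UU^*$: applying \eqref{covariance rel1*} to an approximate unit $\{\mu_\lambda\}$ of $A$ gives $U\pi(\mu_\lambda)U^*=\pi(\alpha(\mu_\lambda))=\overline\pi(\overline\alpha(\mu_\lambda))$, and letting $\lambda\to\infty$ — using $\pi(\mu_\lambda)\to 1$ and $\overline\alpha(\mu_\lambda)\to\overline\alpha(1)$ strictly together with strict continuity of multiplication — yields $UU^*=\overline\pi(\overline\alpha(1))$. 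Since $\overline\alpha$ is a homomorphism, $\overline\alpha(1)$ is a projection, hence so is $UU^*$; the elementary fact that $xx^*$ being a projection forces $x-xx^*x=0$ (expand $(x-xx^*x)(x-xx^*x)^*$) then shows $U$ is a partial isometry. Iterating \eqref{covariance rel1*} gives $U^n\pi(a)U^{*n}=\pi(\alpha^n(a))$, so the same argument applied to the representation $(\pi,U^n)$ of $(A,\alpha^n)$ shows $U^nU^{*n}=\overline\pi(\overline\alpha^n(1))$ is a projection and each $U^n$ is a partial isometry; thus $U$ is a power partial isometry.

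The heart of the matter, and the step I expect to be the main obstacle, is the commutation $U^*U\in\pi(A)'$, which is precisely where multiplicativity of $\alpha$ enters. I would expand $\pi(\alpha(a^*a))$ in two ways: directly, $\pi(\alpha(a^*a))=U\pi(a)^*\pi(a)U^*$ by \eqref{covariance rel1*}; and via $\alpha(a^*a)=\alpha(a)^*\alpha(a)$ with \eqref{covariance rel1*} used twice, $\pi(\alpha(a^*a))=U\pi(a)^*(U^*U)\pi(a)U^*$. Subtracting and writing $q:=U^*U$ gives $U\pi(a)^*(1-q)\pi(a)U^*=0$. Since $\pi(a)^*(1-q)\pi(a)=d^*d$ with $d:=(1-q)\pi(a)$, the left-hand side equals $(dU^*)^*(dU^*)$, so positivity forces $(1-q)\pi(a)U^*=0$. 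Taking adjoints and replacing $a$ by $a^*$ this reads $U\pi(a)=\pi(\alpha(a))U$ and $\pi(a)U^*=U^*\pi(\alpha(a))$, the crucial intertwining relations. Multiplying $U\pi(a)(1-q)=0$ by $U^*$ on the left yields $q\pi(a)(1-q)=0$, and its adjoint gives $(1-q)\pi(a)q=0$, whence $q\pi(a)=q\pi(a)q=\pi(a)q$, i.e. $U^*U\in\pi(A)'$.

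With the intertwining relations (and their iterates $U^n\pi(a)=\pi(\alpha^n(a))U^n$) the remaining claims become computational. Setting $B_n:=U^{*n}\pi(A)U^n$, a product of generators with $m\le n$ collapses: $U^mU^{*n}=(U^mU^{*m})U^{*(n-m)}=\overline\pi(\overline\alpha^m(1))U^{*(n-m)}$, and then $\pi(a)\overline\pi(\overline\alpha^m(1))=\pi(a')$ with $a'=a\overline\alpha^m(1)\in A$ and $\pi(a')U^{*(n-m)}=U^{*(n-m)}\pi(\alpha^{n-m}(a'))$, so the product lands in $B_n$. Hence the span of $\bigcup_n B_n$ is a $*$-subalgebra and its closure $B$ is a $C^*$-algebra. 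I would then check \eqref{relations for concrete reversibles}: $U^*BU\subseteq B$ is immediate since $U^*B_nU=B_{n+1}$; $UBU^*\subseteq B$ follows by a similar collapse ($UB_nU^*\subseteq B_{n-1}$ for $n\ge1$ and $UB_0U^*=\pi(\alpha(A))$); and $U^*U\in B'$ follows from $q\in\pi(A)'$ together with the identities $qU^{*n}=U^{*n}$ and $U^nq=U^n$ for $n\ge1$ (so $qb=b=bq$ on each $B_n$, $n\ge1$).

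Finally, reversibility. The map $\beta(b)=UbU^*$ sends $B$ into $B$ and is multiplicative because $Ub(1-q)=0$ for all $b\in B$ (from $U\pi(a)(1-q)=0$ and $U^n(1-q)=0$, $n\ge1$), giving $\beta(b_1)\beta(b_2)=Ub_1qb_2U^*=Ub_1b_2U^*=\beta(b_1b_2)$. It is extendible: since $\pi(A)\subseteq B$ is non-degenerate in $C$ we have $\overline{BC}\supseteq\overline{\pi(A)C}=C$, so any approximate unit $\{e_\lambda\}$ of $B$ satisfies $e_\lambda\to 1$ strictly in $M(C)$, whence $\beta(e_\lambda)=Ue_\lambda U^*\to UU^*$ strictly and $\overline\beta(1)=UU^*$. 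Moreover $\beta(B)=UBU^*=(UU^*)B(UU^*)=\overline\beta(1)B\overline\beta(1)$ is a hereditary corner (note $pbp=\beta(\beta_*(b))$), while $\ker\beta=(1-q)B$ is complemented by the central projection $q=U^*U\in M(B)$; so Proposition \ref{Proposition on transfers} yields that $(B,\beta)$ is reversible. To identify the transfer operator I would verify that $\beta_*(b):=U^*bU$ maps $B$ to $B$, is positive, satisfies \eqref{b,,2} via $\beta_*(\beta(b)c)=qb\,\beta_*(c)=b\,\beta_*(c)$ (using $q\beta_*(c)=\beta_*(c)$ and centrality of $q$), and satisfies the completeness condition \eqref{definition of complete} since $\beta(\beta_*(b))=UU^*bUU^*=\overline\beta(1)b\overline\beta(1)$; by the uniqueness in Proposition \ref{Proposition on transfers} it is the complete transfer operator.
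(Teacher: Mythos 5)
Your proof is correct, and it shares the paper's overall skeleton: $U^nU^{*n}=\overline{\pi}(\overline{\alpha}^n(1))$ gives the power partial isometry, products of generators collapse into the summand with the larger index so that $B$ is a $C^*$-algebra, and then the relations \eqref{relations for concrete reversibles} yield reversibility. The genuine divergence is at the one nontrivial point: the paper obtains the commutation $U^{*n}U^n\in\pi(A)'$ by citing \cite[Lemma 1.2]{kwa-leb}, whereas you prove it from scratch by the positivity trick (expanding $\pi(\alpha(a^*a))$ in two ways to force $(1-U^*U)\pi(a)U^*=0$), which simultaneously hands you the intertwining relations $U\pi(a)=\pi(\alpha(a))U$ and $\pi(a)U^*=U^*\pi(\alpha(a))$. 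These then serve as your computational workhorse where the paper manipulates the commuting projections $U^{*k}U^k$; the two bookkeeping schemes are equivalent, but yours is self-contained, which is a real gain. At the final step the paper verifies directly that $\beta$ is multiplicative and that $\beta_*$ satisfies the transfer-operator identity and \eqref{definition of complete}, while you route through the characterization of Proposition \ref{Proposition on transfers} (hereditary range plus complemented kernel) and then pin down $\beta_*$ by uniqueness; this is mildly redundant — your verification of the axioms for $\beta_*$ already proves reversibility on its own — but correct. A further merit of your write-up is that you address extendibility of $\beta:B\to B$ explicitly (the approximate-unit argument using non-degeneracy of $\pi(A)$ in $C$), a hypothesis built into the definition of a $C^*$-dynamical system that the paper's proof never mentions. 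One small gloss worth repairing: you establish $\beta(e_\lambda)\to UU^*$ strictly in $M(C)$, whereas extendibility requires strict convergence in $M(B)$; since each $\beta(e_\lambda)b$ lies in $B$, the norm limits $UU^*b$ and $bUU^*$ lie in $B$ and define the required multiplier of $B$, so this is a one-sentence fix rather than a gap.
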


\begin{proof}
Since $\pi:A\to \B(H)$ is non-degenerate it extends uniquely to the strictly continuous unital homomorphism $\overline{\pi}:M(A)\to \B(H)$. Moreover, non-degeneracy of $\pi$ and strict continuity of $\overline{\alpha}$ readily imply that $(\overline{\pi},U)$ is a representation of  $(M(A),\overline{\alpha})$. Since for each $n\in \N$,  $\pi(\overline{\alpha}^n(1))=U^{*n}U^n$ is a projection, $U$ is a power partial isometry. By  \cite[Lemma 1.2]{kwa-leb}   we have $U^{*n}U^n \in \overline{\pi}(M(A))'$ and hence all the more  $U^{*n}U^n \in \pi(A)'$.
\\
Let $\beta(\cdot):=U(\cdot) U^*$ and $\beta_*(\cdot):=U^*(\cdot) U$.
For each $N\in \N$, the self-adjoint linear space $\sum_{n=0}^N \beta_*^k(\pi(A))$ is a $*$-algebra because for  $l\leq k$ and $a,b\in A$, we have 
\begin{align*}
\beta_*^k(\pi(a)) \beta_*^l(\pi(b))
&=U^{*k} \pi(a) \beta^k(1)U^{k-l}  \beta^l(1)\pi(b)U^l
\\
&=U^{*k} \pi(a \overline{\alpha}^k(1))U^{k-l} (U^{*(k-l)}U^{k-l})\pi(\overline{\alpha}^k(1)b)U^l
\\
&=U^{*k} \pi(a \overline{\alpha}^k(1))U^{k-l} \pi(\overline{\alpha}^l(1)b) U^{*(k-l)}U^{k}
\\
&=\beta_{*}^k\left(\pi\big(a \overline{\alpha}^k(1)  \alpha^{k-l}(b)\big)\right) \in \beta_*^k(\pi(A)).
\end{align*}
Hence $B:=\sum_{k=0}^\infty \beta_*^k(\pi(A))$ is a $C^*$-algebra. Relations $U^*BU\subseteq B$ and $U^*U\in B'$ are now straightforward. Moreover, for any $a\in A$ and $k>0$ the projection $U^{*k-1}U^{k-1}$ commute with  $\overline{\pi}(\overline{\alpha}(1))\in \overline{\pi}(M(A))$  and thus we have
\begin{align*}
\beta(\beta_*^k(\pi(a)))&=\overline{\pi}(\overline{\alpha}(1))  (U^{*k-1}U^{k-1})  U^{*k-1}\pi(a) U^{*k-1}  (U^{*k-1}U^{k-1})
 \overline{\pi}(\overline{\alpha}(1))
 \\
&  =\beta_*^{k-1}(\pi(\overline{\alpha}^k(1) a\overline{\alpha}^k(1))).
\end{align*}
This implies that $U BU^*\subseteq B$.
Using relations \eqref{relations for concrete reversibles} it is easy to see that $(B,\beta)$ is a reversible $C^*$-dynamical system. Indeed, for $a,b \in B$ we have 
$$
\beta(ab) =U (U^*U)a bU^*=Ua (U^*U) bU^*=\beta(a)\beta(b),
$$
 $$
 a\beta_*(b)=a (U^*U)U^*b U=(U^*U)a U^*b U=\beta_*(\beta(a)b)
 $$
 and $\beta(\beta_*(a))=\beta(1)a\beta(1)$.

  \end{proof}
\begin{rem}\label{from morphisms to representations} Let $(\pi, U)$ be a representation of $(A,\alpha)$ and let $(B,\beta)$ be the associated $C^*$-dynamical system defined in the assertion of  Proposition \ref{iteration of representations}. Treating $\pi$ as a morphism   $(A,\alpha) \stackrel{\pi}{\rightarrow}(B,\beta)$ we have 
$$
\{ a\in A: U^*U \pi(a)=\pi(a)\}=\pi^{-1}((\ker\beta)^\bot),
$$
cf. Lemma \ref{lemma describing ideal J}.
Thus if $J$ is an ideal in $(\ker\alpha)^\bot$ then the morphism $(A,\alpha) \stackrel{\pi}{\rightarrow}(B,\beta)$ is covariant on $J$  if and only if the representation $(\pi,U)$ is covariant on $J$ in the following sense. 
\end{rem}
\begin{defn}[cf.  \cite{kwa-leb}, Definition 1.7]\label{kowariant  rep defn 2} 
Let $(\pi,U)$ be a representation of  $(A,\alpha)$ and let $J$ be an ideal in $(\ker\alpha)^\bot$. We say that $(\pi,U)$ is \emph{covariant on $J$} if 
\begin{equation}\label{covariance rel3}
J\subseteq \{ a\in A: U^*U \pi(a)=\pi(a)\}.
\end{equation}
If  the above inclusion is the equality we say   $(\pi,U)$ is  a \emph{$J$-covariant representation}. 
If  
 $(\pi,U)$  is covariant on $(\ker\alpha)^\bot$  we say $(\pi,U)$ is  a \emph{covariant representation}. 
\end{defn}
By  Remark \ref{from morphisms to representations} we   can immediately translate Lemmas \ref{covariance for extensions}, \ref{covariance in terms of projection} and  Proposition  \ref{lemma for reversibles} from the language of morphisms to the language of representations. For example, from Proposition  \ref{lemma for reversibles}  we get 
\begin{prop}\label{proposition to be used} 
 Let $(A,\alpha)$ be a reversible system and $(\pi,U)$ its representation. The following conditions are equivalent:
 \begin{itemize}
 \item[i)]  $(\pi,U)$ is covariant,
 \item[ii)] $U^*\pi(a)U=\pi(\alpha_*(a))$ for all $a\in A$,
 \item[iii)] $\overline{\pi}(\overline{\al}_*(1)) \leq U^*U$,
 \item[iv)] $\overline{\pi}(\overline{\al}_*(1))=U^*U$.
       \end{itemize}
\end{prop}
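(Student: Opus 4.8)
The plan is to deduce the proposition as a direct transcription of Proposition \ref{lemma for reversibles}, using the fact that a representation canonically produces a reversible system. First I would invoke Proposition \ref{iteration of representations} to obtain the reversible $C^*$-dynamical system $(B,\beta)$ with $B=\sum_{n=0}^\infty U^{*n}\pi(A)U^n$, $\beta(\cdot)=U(\cdot)U^*$ and complete transfer operator $\beta_*(\cdot)=U^*(\cdot)U$. By Remark \ref{from morphisms to representations}, $\pi$ is then a morphism $(A,\alpha)\stackrel{\pi}{\rightarrow}(B,\beta)$, and $(\pi,U)$ is covariant (that is, covariant on $(\ker\alpha)^\bot$) precisely when $\pi$ is a covariant morphism. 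Since $(A,\alpha)$ is reversible by hypothesis and $(B,\beta)$ is reversible by construction, Proposition \ref{lemma for reversibles} applies verbatim to the morphism $\pi$.

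It then remains to match the four conditions of Proposition \ref{lemma for reversibles} (taken with $T=\pi$) against the four conditions of the present statement. Condition i) of both coincides via Remark \ref{from morphisms to representations}. Condition ii) of Proposition \ref{lemma for reversibles}, namely $\beta_*\circ \pi=\pi\circ\alpha_*$, unwinds to $U^*\pi(a)U=\pi(\alpha_*(a))$ for all $a\in A$, which is condition ii) here. The only point requiring attention is the identification $\overline{\beta}_*(1)=U^*U$ inside $M(B)$: since $\beta_*$ is extendible (being a complete transfer operator, cf.\ the remark following Proposition \ref{lemma for extensions of transfers}) and acts as $b\mapsto U^*bU$ on $B$, its strictly continuous extension sends $m\mapsto U^*mU$, whence $\overline{\beta}_*(1)=U^*U$. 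With this identification, conditions iii) and iv) of Proposition \ref{lemma for reversibles}, i.e. $\overline{\pi}(\overline{\alpha}_*(1))\leq \overline{\beta}_*(1)$ and $\overline{\pi}(\overline{\alpha}_*(1))=\overline{\beta}_*(1)$, become exactly conditions iii) and iv) of the present statement.

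I expect the only genuine obstacle to be the multiplier-algebra bookkeeping behind $\overline{\beta}_*(1)=U^*U$: one must verify that $U^*U$ is indeed a multiplier of $B$ and that $b\mapsto U^*bU$ is the \emph{strict} extension of $\beta_*$, not merely a map agreeing with it on $B$. This follows from the relations \eqref{relations for concrete reversibles}, in particular $U^*BU\subseteq B$ together with $U^*U\in B'$, which ensure that $U^*(\cdot)U$ maps $B$ into $B$ and that $U^*U$ commutes with $B$; one then reads off $\overline{\beta}_*(1)$ as the strict limit of $\beta_*(\mu_\lambda)=U^*\mu_\lambda U$ along an approximate unit $\{\mu_\lambda\}$ of $B$, which converges strictly to $U^*U$. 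Once this identification is secured, the proposition requires no further computation, being an immediate translation of Proposition \ref{lemma for reversibles} into the language of representations.
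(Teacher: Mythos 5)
Your proposal is correct and follows exactly the paper's route: the paper states this proposition without a written proof, asserting it is the immediate translation of Proposition \ref{lemma for reversibles} through Remark \ref{from morphisms to representations} (with the reversible system $(B,\beta)$ supplied by Proposition \ref{iteration of representations}). Your explicit verification of the multiplier identification $\overline{\beta}_*(1)=U^*U$ via the relations \eqref{relations for concrete reversibles} is precisely the bookkeeping the paper leaves implicit, so nothing in your argument deviates from, or is missing relative to, the paper's intended proof.
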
 
We also immediately conclude with the following statement, cf. \cite[Theorem 3.7]{kwa-ext}.

\begin{prop}\label{representations vs reversible extensions}
Let $(B,\beta)$ be a natural reversible $J$-extension of   a  $C^*$-dynamical system $(A,\alpha)$ where $J$ is an ideal in $(\ker\alpha)^\bot$. We have a one-to-one correspondence between  representations $(\pi,U)$ of $(A,\alpha)$ covariant on $J$ and covariant representations $(\widetilde{\pi},U)$ of $(B,\beta)$ where
$$
\pi=\widetilde{\pi}|_A, \qquad \widetilde{\pi}\left(\sum_{k=0}^n\beta_*(a_k)\right)=\sum_{k=0}^n U^*\pi(a_k)U, \qquad a_k\in A,\, k=0,...,n.
$$
Under this correspondence $\ker\widetilde{\pi}$ is an invariant ideal in $(B,\beta)$ corresponding to the $J$-pair for $(A,\alpha)$ given by 
$$
I=\ker\pi, \qquad I'=\{ a\in A: U^*U \pi(a)=\pi(a)\}.
$$
In particular, $\widetilde{\pi}$ is faithful if and only if $\pi$ is  faithful and $J=\{ a\in A: U^*U \pi(a)=\pi(a)\}$. 
\end{prop}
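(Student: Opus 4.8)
The plan is to read everything off the machinery already in place, by treating $\pi$ as a morphism into the concrete reversible system attached to $(\pi,U)$ by Proposition \ref{iteration of representations}. Given a representation $(\pi,U)$ of $(A,\alpha)$ covariant on $J$, let $(B_\pi,\beta_\pi)$ be the reversible system of Proposition \ref{iteration of representations}, so that $B_\pi=\sum_{n}U^{*n}\pi(A)U^n$, $\beta_\pi(\cdot)=U(\cdot)U^*$, and the complete transfer operator $\beta_{\pi,*}(\cdot)=U^*(\cdot)U$ has strict extension sending $1$ to $U^*U$. By Remark \ref{from morphisms to representations} the induced morphism $(A,\alpha)\xrightarrow{\pi}(B_\pi,\beta_\pi)$ is covariant on $J$, so Corollary \ref{corollary for universality} supplies a \emph{unique} covariant morphism $(B,\beta)\xrightarrow{\widetilde\pi}(B_\pi,\beta_\pi)$ extending $\pi$. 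Since $\widetilde\pi$ intertwines $\beta$ with $\beta_\pi(\cdot)=U(\cdot)U^*$, the pair $(\widetilde\pi,U)$ is a representation of $(B,\beta)$; and because $\widetilde\pi$ is a covariant morphism between reversible systems, Proposition \ref{lemma for reversibles}(iv) gives $\overline{\widetilde\pi}(\overline{\beta}_*(1))=U^*U$, which by Proposition \ref{proposition to be used}(iv) is precisely the assertion that $(\widetilde\pi,U)$ is a \emph{covariant} representation of $(B,\beta)$. The displayed formula for $\widetilde\pi$ then follows from $B=\sum_n\beta_*^n(A)$ (Theorem \ref{granica prosta zepsute twierdzenie}(i) and Remark \ref{identifications remark}) together with the intertwining $\widetilde\pi\circ\beta_*=\beta_{\pi,*}\circ\widetilde\pi$ from Proposition \ref{lemma for reversibles}(ii).

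For the converse I would restrict a covariant representation $(\widetilde\pi,U)$ of $(B,\beta)$ to $A$. As $\beta|_A=\alpha$, the pair $(\pi,U):=(\widetilde\pi|_A,U)$ is a representation of $(A,\alpha)$, and covariance on $J$ is checked directly on $J=\{a\in A:\overline{\beta}_*(1)T(a)=T(a)\}$ (Theorem \ref{granica prosta zepsute twierdzenie}(i)): for $a\in J$ one computes $U^*U\pi(a)=\overline{\widetilde\pi}(\overline{\beta}_*(1))\widetilde\pi(a)=\widetilde\pi(\overline{\beta}_*(1)a)=\widetilde\pi(a)=\pi(a)$, using $U^*U=\overline{\widetilde\pi}(\overline{\beta}_*(1))$ from Proposition \ref{proposition to be used}(iv). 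These two assignments are mutually inverse: restricting the extension returns $\pi$ tautologically, and extending a restriction returns the original $\widetilde\pi$ by the uniqueness clause of Corollary \ref{corollary for universality}.

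It remains to identify $\ker\widetilde\pi$ and to read off faithfulness. Since $\widetilde\pi$ intertwines both $\beta$ and $\beta_*$, its kernel satisfies $\beta(\ker\widetilde\pi)\subseteq\ker\widetilde\pi$ and $\beta_*(\ker\widetilde\pi)\subseteq\ker\widetilde\pi$, hence is invariant by Lemma \ref{helping lemma}. Feeding $\widetilde{I}=\ker\widetilde\pi$ into \eqref{J-pair from invariant ideal} of Theorem \ref{invariant ideals and J-pairs thm} yields $I=\ker\widetilde\pi\cap A=\ker\pi$ and, with $q=1-\overline{\beta}_*(1)$, $I'=\{a\in A:qa\in\ker\widetilde\pi\}=\{a\in A:(1-U^*U)\pi(a)=0\}=\{a\in A:U^*U\pi(a)=\pi(a)\}$, where I use $\overline{\widetilde\pi}(q)=1-U^*U$. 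Finally $\widetilde\pi$ is faithful iff $\ker\widetilde\pi=0$, which under the bijection of Theorem \ref{invariant ideals and J-pairs thm} corresponds to the minimal $J$-pair $(0,J)$ (indeed $qa=0$ is equivalent to $a\in J$); thus faithfulness of $\widetilde\pi$ amounts to $\ker\pi=0$ together with $\{a:U^*U\pi(a)=\pi(a)\}=J$.

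I expect no deep obstacle here, since the substance already resides in Corollary \ref{corollary for universality} and Theorem \ref{invariant ideals and J-pairs thm}; the only genuine care is bookkeeping of the multiplier projections. One must keep track that the transfer operator of $(B_\pi,\beta_\pi)$ satisfies $\overline{\beta_{\pi,*}}(1)=U^*U$, so that covariance of the \emph{morphism} $\widetilde\pi$ translates into covariance of the \emph{representation} $(\widetilde\pi,U)$ via Propositions \ref{lemma for reversibles} and \ref{proposition to be used}, and that $\overline{\widetilde\pi}(1-\overline{\beta}_*(1))=1-U^*U$ when computing $I'$. Everything else is a direct assembly of the cited results.
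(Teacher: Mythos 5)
Your proposal is correct and takes essentially the same route as the paper: the paper's own proof is precisely the one-line assembly ``in view of Remark \ref{from morphisms to representations} apply Corollary \ref{corollary for universality} and Theorem \ref{invariant ideals and J-pairs thm},'' and your argument is a faithful unpacking of exactly that assembly, with the key bookkeeping point (that the concrete system $(B_\pi,\beta_\pi)$ of Proposition \ref{iteration of representations} has $\overline{\beta_{\pi,*}}(1)=U^*U$, so covariance of the morphism $\widetilde\pi$ and covariance of the representation $(\widetilde\pi,U)$ match via Propositions \ref{lemma for reversibles} and \ref{proposition to be used}) handled correctly, as is the identification of $\ker\widetilde\pi$ with the $J$-pair $(\ker\pi,\{a:U^*U\pi(a)=\pi(a)\})$ and of faithfulness with the minimal $J$-pair $(0,J)$.
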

\begin{proof}
In view of Remark \ref{from morphisms to representations} it suffices to apply Corollary \ref{corollary for universality} and Theorem \ref{invariant ideals and J-pairs thm}.
\end{proof}

\subsection{Crossed products}\label{subsection crossed products}

We define the relevant crossed products in universal terms. They can be treated as a subclass of crossed products we proposed in \cite[Definition 4.9]{kwa-doplicher} or in \cite[Definition 2.3]{kwa-exel}. If $A$ is unital they coincide with those  constructed and investigated in   \cite{kwa-leb}. 

\begin{defn}\label{crossed product defn}
Let $( A,\alpha)$ be a $C^*$-dynamical system and $J$ an ideal in $ (\ker\alpha)^\bot$. A \emph{crossed product  of $ A$ by $\alpha$ relative to $J$} is   the  $C^*$-algebra $
C^*( A,\alpha,J):=C^*(\iota,u)$ generated by  a  representation $(\iota, u)$ of $( A,\alpha)$ which is universal with respect to  covariance on $J$; i.e. we require  that  if $(\pi,U)$ is a  representation of $(A,\alpha)$ covariant on $J$  then 
$$
\iota(a)\longmapsto \pi(a), \qquad u \longmapsto U
$$
extends  to the homomorphism $\pi\rtimes U$ from $
C^*( A,\alpha,J) $ onto  $C^*(\pi,U)$. If  $J= (\ker\alpha)^\bot$ we write $
C^*( A,\alpha):=C^*( A,\alpha,(\ker\alpha)^\bot)
$ and call it  the (unrelative) \emph{crossed product of $A$ by $\alpha$}.
\end{defn}
\begin{prop}\label{on existence of crossed products}
For any   $C^*$-dynamical system $( A,\alpha)$ and any  ideal $J$ in $ (\ker\alpha)^\bot$, the crossed product $
C^*( A,\alpha,J)$ exists and  is unique (up to natural isomorphism).

 Moreover, 
 \begin{itemize}
 \item[i)] the universal representation $(\iota,u)$ is faithful, $J$-covariant, and we have 
$$
C^*( A,\alpha,J)=\clsp\{u^{*n} \iota(a) u^m: a\in A, n,m\in \N\}, \qquad u \in M(C^*( A,\alpha,J)).
$$
\item[ii)] $C^*( A,\alpha,J)$ sits naturally as an  ideal in $C^*( M(A),\overline{\alpha},\overline{J})$.
\item[iii)] If $(B,\beta)$ is a natural reversible $J$-extension of $(A,\alpha)$ we have natural isomorphism
$$
C^*( A,\alpha,J)\cong C^*(B,\beta),
$$
and $C^*(B,\beta)$ is the closure of a $^*$-algebra consisting of elements of the form
\begin{equation}\label{general form of a guy in cross}
b=\sum_{k=1}^n u^{*n}\iota(b_{-k}) + \iota(b_0) + \sum_{k=1}^n\iota(b_k)u^{n}, \qquad b_{\pm k} \in B, k=0,\pm 1,...,\pm n,
\end{equation}
where  $(\iota,u)$ denotes the universal covariant representation of $(B,\beta)$.
\end{itemize}
\end{prop}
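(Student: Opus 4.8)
The plan is to reduce everything to the reversible case, where the picture mimics crossed products by automorphisms, and then transport the conclusions back through the natural reversible $J$-extension $(B,\beta)$. Uniqueness of the universal object is the usual formal consequence of the universal property in Definition \ref{crossed product defn}: two universal pairs induce mutually inverse homomorphisms. For existence I would form the universal $C^*$-seminorm $\|x\|:=\sup\|(\pi\rtimes U)(x)\|$ on the free $*$-algebra generated by symbols $\iota(a)$, $a\in A$, and $u$, the supremum ranging over all representations $(\pi,U)$ of $(A,\alpha)$ covariant on $J$. By Proposition \ref{iteration of representations} each such $U$ is a partial isometry, so $\|U\|\le 1$ and $\|\pi(a)\|\le\|a\|$; hence $\|x\|<\infty$ for every $x$, and the Hausdorff completion yields $C^*(A,\alpha,J)$ with its universal pair $(\iota,u)$. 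That $\iota$ is a nondegenerate homomorphism and $u\in M(C^*(A,\alpha,J))$ is exactly Remark \ref{remark for one proof and for Bialorusy} together with Proposition \ref{iteration of representations}. The genuine content of item (i) is that $\iota$ is \emph{faithful} and \emph{$J$-covariant} with equality in \eqref{covariance rel3}, which is where one must exhibit a single honestly faithful $J$-covariant representation.

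To produce it I would invoke Proposition \ref{representations vs reversible extensions}: a $J$-covariant representation of $(A,\alpha)$ is the same datum as a covariant representation $(\widetilde{\pi},U)$ of $(B,\beta)$, and $\widetilde{\pi}$ is faithful precisely when $\pi$ is faithful and the covariance ideal equals $J$. Thus it suffices to build one faithful covariant \emph{spatial} representation of the reversible system $(B,\beta)$, i.e. a faithful nondegenerate $\rho:B\to\B(H)$ and a partial isometry $U$ with $U\rho(b)U^*=\rho(\beta(b))$ and $U^*U=\overline{\rho}(\overline{\beta}_*(1))$, the covariance condition in the form of Proposition \ref{proposition to be used}. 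I would realize this as a Fock-type, weighted-shift representation attached to the complete transfer operator $\beta_*$: from a faithful nondegenerate representation $\rho_0$ of $B$ one represents $B$ diagonally by the compressions $\rho_0\circ\beta_*^n$ on $\bigoplus_{n\ge 0}H_0$, lets $U$ be the shift weighted by $\overline{\rho}(\overline{\beta}_*(1))$, and checks by direct computation that \eqref{covariance rel1*} and $U^*U=\overline{\rho}(\overline{\beta}_*(1))$ hold while $\rho$ is isometric on the zeroth summand. Restricting to $A$ via Proposition \ref{representations vs reversible extensions} gives a faithful representation of $(A,\alpha)$ whose covariance ideal is exactly $J$; this forces $\iota$ isometric and pins the covariance ideal of $(\iota,u)$ to $J$, finishing item (i). The spanning $C^*(A,\alpha,J)=\clsp\{u^{*n}\iota(a)u^m\}$ then follows from a routine monomial calculus, pushing $u$'s right and $u^*$'s left through $\iota(A)$ by \eqref{covariance rel1*}.

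For item (iii) I would note that, under the bijection of Proposition \ref{representations vs reversible extensions}, the representations of $(A,\alpha)$ covariant on $J$ and the covariant representations of $(B,\beta)$ generate the \emph{same} $C^*$-algebra, since $U$ is common and $\widetilde{\pi}(\beta_*^k(a))=U^{*k}\pi(a)U^k$; hence the two universal objects carry the same universal norm and $C^*(A,\alpha,J)\cong C^*(B,\beta)$. The symmetric form \eqref{general form of a guy in cross} comes from the spanning of item (i) applied to $(B,\beta)$ plus reversibility: by Proposition \ref{proposition to be used}(ii) we have $u^*\iota(b)u=\iota(\beta_*(b))$, so $u^{*n}\iota(b)u^m$ equals $\iota(\beta_*^n(b))\,u^{m-n}$ when $n\le m$ and $u^{*(n-m)}\iota(\beta_*^m(b))$ when $n\ge m$, both coefficients lying in $B$. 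For item (ii) I would restrict the universal pair of $C^*(M(A),\overline{\alpha},\overline{J})$ to $A$: by Lemma \ref{covariance for extensions} this is a representation of $(A,\alpha)$ covariant on $J$, giving a homomorphism $C^*(A,\alpha,J)\to C^*(M(A),\overline{\alpha},\overline{J})$; since $\iota(A)$ is an ideal in $\overline{\iota}(M(A))$ and $u,u^*$ respect the monomial structure, its image $\clsp\{u^{*n}\iota(a)u^m:a\in A\}$ is a two-sided ideal, and choosing a faithful covariant representation of the reversible extension of $(M(A),\overline{\alpha})$ that restricts to one of $(A,\alpha)$ shows the map is isometric.

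The main obstacle is the second paragraph: constructing a concrete faithful covariant spatial representation of the reversible system and verifying that its covariance ideal is exactly $J$ rather than something larger. This is precisely the statement that the coefficient algebra embeds isometrically into the crossed product, the analogue of the Fock-space faithfulness theorem for relative Cuntz-Pimsner algebras; all the soft universal-property bookkeeping in items (i)--(iii) hinges on this single faithfulness input, and the compatibility required in item (ii) forces the representations chosen for $A$ and for $M(A)$ to be selected coherently.
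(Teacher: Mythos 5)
There is a genuine gap at the very step you correctly identify as the crux: the construction of a faithful \emph{covariant} representation of the reversible system $(B,\beta)$. First, your proposed diagonal map $\rho=\bigoplus_{n\ge 0}\rho_0\circ\beta_*^n$ is not a homomorphism, because a complete transfer operator is in general not multiplicative: one has $\beta_*(a)\beta_*(b)=\beta_*\bigl(a\,\overline{\beta}(1)\,b\bigr)$, and $\overline{\beta}(1)$ is not central. Concretely, take $B=\K(\ell^2(\N))$ and $\beta(a)=SaS^*$ with $S$ the unilateral shift; this system is reversible with $\beta_*(a)=S^*aS$, yet for the matrix units $E_{10},E_{01}$ one gets $\beta_*(E_{10})=\beta_*(E_{01})=0$ while $\beta_*(E_{10}E_{01})=\beta_*(E_{11})=E_{00}\neq 0$. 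Second, even setting this aside, no weighted shift $U$ on a one-sided sum $\bigoplus_{n\ge 0}H_0$ can satisfy \eqref{covariance rel1*} while $\rho$ is faithful on the zeroth summand: if $U$ shifts forward, the zeroth component of $U\rho(b)U^*$ is $0$, forcing $\rho_0(\beta(b))=0$ for all $b$; if $U$ shifts backward with weight $w_0$, one needs $w_0\rho_0(\beta_*(b))w_0^*=\rho_0(\beta(b))$ for all $b$, which fails already in the example above (take $b=E_{00}$: the left side is $0$, the right side is $\rho_0(E_{11})$). The simplest illustration of the obstruction is $B=\C$, $\beta=\mathrm{id}$: by Proposition \ref{proposition to be used} covariance forces $U$ to be unitary, so no proper one-sided shift can occur. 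What one-sided Fock-type constructions produce are Toeplitz-type, non-covariant representations; passing from these to covariant ones while keeping $B$ faithfully embedded is precisely the coefficient-embedding theorem, and it cannot be disposed of by a ``direct computation''.

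It is worth noting that the paper does not reprove this hard input either --- it imports it: by \cite[Theorem 1.11, Proposition 3.7]{kwa-leb} the unital crossed product $C^*(M(A),\overline{\alpha},\overline{J})$ exists and is generated by a \emph{faithful} $\overline{J}$-covariant representation, and the actual content of the paper's proof is the non-unital reduction --- showing that $\clsp\{u^{*n}\overline{\iota}(a)u^m: a\in A,\ n,m\in\N\}$ is an ideal in $C^*(M(A),\overline{\alpha},\overline{J})$, restricting to the essential subspace of this ideal to recover non-degeneracy of $\iota$, and verifying $J$-covariance from $A\cap\overline{J}=J$ via Lemma \ref{diamonds lemma}; items (i)--(iii) then follow much as you indicate. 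Your surrounding bookkeeping (uniqueness, the universal seminorm, the monomial calculus behind (i) and (iii), and the ideal argument for (ii)) is sound, but the single load-bearing step is incorrect as stated; to repair it you must either quote the unital results of \cite{kwa-leb} (equivalently, the faithfulness theorem for relative Cuntz--Pimsner algebras) as the paper does, or give an honest independent proof of the embedding $B\hookrightarrow C^*(B,\beta)$.
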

\begin{proof}
Uniqueness of  $C^*(A,\alpha,J)$ follows from universality. The existence can be shown by standard arguments, see \cite{blackadar}. But  we  deduce it, along the way, from the  constructions performed in \cite{kwa-leb} for unital $C^*$-algebras. Namely, by \cite[Theorem 1.11, Proposition 3.7]{kwa-leb} we know that the crossed product $C^*( M(A),\overline{\alpha},\overline{J})$ exists and is generated by a faithful $\overline{J}$-covariant representation $(\overline{\iota},u)$ of  $(M(A),\overline{\alpha})$ on a certain Hilbert space $H$. Let $\iota:=\overline{\iota}|_{A}$.  We claim that, modulo non-degeneracy issues to be explained below, the pair $(\iota,u)$ is the universal $J$-covariant representation of $(A,\alpha)$.

Indeed, by \cite{kwa-leb} we know that $C^*( M(A),\overline{\alpha},\overline{J})=\clsp\{u^{*n} \overline{\iota}(a) u^m: a\in M(A), n,m\in \N\}$.  Consider  the self-adjoint  Banach space  $C^*( A,\alpha,J):=\clsp\{u^{*n} \overline{\iota}(a) u^m: a\in A, n,m\in \N\}$.  Let $a, b\in M(A)$ and $k,l,m, n\in \N$, where for instance $m\leq l$. Calculations similar to that in the proof of Proposition \ref{iteration of representations} give
$$
\left(u^{*k} \overline{\iota}(a) u^l\right) \left(u^{*m} \overline{\iota}(b) u^n\right)=u^{*k} \overline{\iota}(a\overline{\alpha}^{l-m}(\overline{\alpha}^m(1)b)) u^{n+l-m}.
$$
Hence if either $a$ or $b$ is in $A$ the product  is in $u^{*k} \iota(A) u^{n+l-m}$. This shows that $C^*( A,\alpha,J)$ is an ideal in $C^*( M(A),\overline{\alpha},\overline{J})$. In particular,  $C^*( A,\alpha,J)$ is a $C^*$-algebra  generated   by $\iota(A)\cup \iota(A)u$, and  the orthogonal projection $P$ from $H$  onto the essential space $C^*( A,\alpha,J)H$ for $C^*( A,\alpha,J)$ commutes with elements of $C^*( M(A),\overline{\alpha},\overline{J})$. Significantly, $P$ commutes with $u$. Furthermore, since $u^{*k} \iota(A)=u^{*k} \overline{\iota}(\overline{\alpha}^k(1))\iota(A)=u^{*k} \iota(\alpha^k(A)A)=\iota(A)u^{*k} \iota(A)$ we see that $C^*( A,\alpha,J)$ contains  $\iota(A)$ as a non-degenerate $C^*$-algebra. Thus $\iota(A)H=PH$ and  $\iota(a)=\iota(a)P$ for all $a\in A$. Accordingly, restricting $\iota$ and $u$ to $PH$ we see that $(\iota,u)$ is a faithful representation of $(A,\alpha)$ on $PH$. It is $J$-covariant because 
$$
\{a\in A: u^*u\iota(a)P=\iota(a)P\}=\{a\in A: u^*u\iota(a)=\iota(a)\}=A \cap \overline{J}=J, 
$$ 
cf. Lemma \ref{diamonds lemma}. Universality of $(\iota,u)$  follows  from  Lemma \ref{covariance for extensions} i),  cf. Remark \ref{from morphisms to representations}, and the universality of  $C^*( M(A),\overline{\alpha},\overline{J})$.  Operator $u$ restricted to $PH$ identifies with a multiplier of $C^*( A,\alpha,J)$ in an obvious way. This shows parts i) and ii).

Part iii) follows  from Proposition \ref{representations vs reversible extensions} and  part i) of the present assertion, plus  a simple observation that for $b\in B$ we have
$$
u^{*m}(b)u^n=
\begin{cases}
 u^{*(m-n)}\iota(\beta_*^n(b)), & \textrm{ if } m \geq n, \\
 \iota(\beta_*^m(b))u^{n-m}, & \textrm{ if } m < n.
\end{cases}
$$
\end{proof}
\begin{rem}
In the sequel, we will  assume the identification
$$
C^*( A,\alpha,J)=C^*(B,\beta)
$$ 
where  $(B,\beta)$ is the natural reversible $J$-extension of $(A,\alpha)$. 
Moreover,  we will write  $(\iota,u)$ for both  the universal covariant representation of $(B,\alpha)$ and  the universal $J$-covariant representation of $(A,\alpha)$.
\end{rem}
\begin{rem}\label{remark to mentioned in the introduction}
One can  define  the crossed product $C^*( A,\alpha,J)$ for an arbitrary  ideal $J$ in $A$. Just let $C^*( A,\alpha,J)=C^*(\iota(A),u)$ be the $C^*$-algebra generated by    a  representation $(\iota, u)$  of $( A,\alpha)$ universal with respect to representations satisfying \eqref{covariance rel3}. 
However, if $J\nsubseteq (\ker\alpha)^\bot$ the universal representation $(\iota, u)$ is not faithful. Moreover, by the reduction procedure described in \cite[Section 5.3]{kwa-leb}, see also  \cite[Example 6.24]{kwa-doplicher},
$$
R:=\{a \in  A:  \alpha^n(a)\in J \textrm{ for all } n\in \N  \textrm{ and } \lim_{n\to \infty} \alpha^n(a)=0\}
$$
is the smallest positively invariant ideal in $(A,\alpha)$ such that $q_R(J) \subseteq (\ker\alpha_R)^\bot$, and we have a natural isomorphism
$$
C^*( A,\alpha,J)\cong C^*( A/R,\alpha_R,q_R(J)). 
$$
Thus this seemingly more general situation reduces  easily  to that  of Definition \ref{crossed product defn}. For the particular case of Stacey's crossed product $A\times_\alpha^1 \N$ we have 
$$
A\times_\alpha^1 \N=C^*( A,\alpha,A)\cong C^*( A/R,\alpha_R) \quad \textrm{where} \quad R=\{a \in  A:  \lim_{n\to \infty} \alpha^n(a)=0\}.
$$
Here  $( A/R,\alpha_R)$ can be regarded as the largest subsystem of $(A,\alpha)$ such that $\alpha_R$ is a monomorphism.
\end{rem}
Let now $(A,\alpha)$ and $J\subseteq  (\ker\alpha)^\bot$ be fixed.  By  universality of the representation $(\iota,u)$ generating  $C^*( A,\alpha,J)$ we have a (point-wise continuous) circle action $\mathbb{T}=\{z\in \C: |z|=1\}\ni z \longmapsto \gamma_z \in \Aut(C^*( A,\alpha,J))$ determined by 
$$
\gamma_z(\iota(a))=\iota(a), \qquad \gamma_z(u)=z u, \qquad a\in A, \,\,z\in \mathbb{T}.
$$
We denote this action by $\gamma$ and refer to it as the \emph{gauge action} on $C^*( A,\alpha,J)$.
One infers from Proposition  \ref{on existence of crossed products} that the algebra $\iota(B)$, where  $(B,\beta)$ is the natural reversible $J$-extension of $(A,\alpha)$,  is the fixed point algebra for the gauge action $\gamma$. Integration over the Haar measure $\mu$ on $\mathbb{T}$ gives  a conditional expectation
$$
E(a)=\int_{\mathbb{T}} \gamma_z(a) d\mu, \qquad a\in C^*( A,\alpha,J),
$$  
from $C^*( A,\alpha,J)$ onto  $\iota(B)$. We get the following  version of a standard result  often called gauge-invariant uniqueness theorem. 
\begin{prop}\label{gauge-invariance equivalence}
Let $(\pi,U)$ be a faithful $J$-covariant representation. The following conditions are equivalent:
\begin{itemize}
\item[i)] $\pi\rtimes U: C^*( A,\alpha,J)\to C^*(\pi,U)$ is an isomorphism,
\item[ii)] we have the equality $J=\{ a\in A: U^*U \pi(a)=\pi(a)\}$ and  there is a circle action $\gamma^{\pi}$ on  $C^*(\pi,U)$ such that 
$$
\gamma_z^\pi(\pi(a))=\pi(a), \qquad \gamma_z^\pi(U)=z U\qquad a\in A, \,\,z\in \mathbb{T},
$$  
\item[iii)] we have the equality $J=\{ a\in A: U^*U \pi(a)=\pi(a)\}$ and there is a conditional expectation from $C^*( \pi,U)$ onto $\sum_{n=0}^\infty U^{*n} \pi(A) U^n$, sending the spaces $U^{*n} \pi(A) U^m$ to zero when $m\neq n$.
\end{itemize}
\end{prop}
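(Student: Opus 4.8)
The plan is to prove the three conditions equivalent by the cycle $i)\Rightarrow ii)\Rightarrow iii)\Rightarrow i)$. Throughout I would use that $(\pi,U)$ is $J$-covariant, so that $\pi\rtimes U$ exists and is surjective by the universal property in Definition \ref{crossed product defn}, and so that the equality $J=\{a\in A: U^*U\pi(a)=\pi(a)\}$ stated in ii) and iii) already holds by hypothesis; the genuine content of those conditions is the existence of a gauge action, resp.\ of a conditional expectation. I would also record once, by the multiplication rules used in Propositions \ref{iteration of representations} and \ref{on existence of crossed products}, that $C^*(\pi,U)=\clsp\{U^{*n}\pi(a)U^m: a\in A,\, n,m\in\N\}$ and that $\sum_{n=0}^\infty U^{*n}\pi(A)U^n=\widetilde\pi(B)$ is the $C^*$-algebra of Proposition \ref{iteration of representations}, where $(\widetilde\pi,U)$ is the covariant representation of the reversible extension $(B,\beta)$ attached to $(\pi,U)$ via Proposition \ref{representations vs reversible extensions}. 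Under the identification $C^*(A,\alpha,J)=C^*(B,\beta)$ of Proposition \ref{on existence of crossed products} iii), this reduces the statement to the gauge-invariant uniqueness theorem for the reversible system $(B,\beta)$.

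For $i)\Rightarrow ii)$ I would transport the gauge action: if $\Phi:=\pi\rtimes U$ is an isomorphism, then $\gamma^\pi_z:=\Phi\circ\gamma_z\circ\Phi^{-1}$ is a circle action, and evaluating on generators gives $\gamma^\pi_z(\pi(a))=\pi(a)$ and $\gamma^\pi_z(U)=zU$, using that $\Phi$ carries $\iota(a)$ to $\pi(a)$ and, after extension to multipliers, $u$ to $U$. For $ii)\Rightarrow iii)$ I would integrate, setting $E^\pi(x):=\int_{\mathbb{T}}\gamma^\pi_z(x)\,d\mu$. A short computation gives $\gamma^\pi_z(U^{*n}\pi(a)U^m)=z^{m-n}U^{*n}\pi(a)U^m$, so $E^\pi$ kills the spanning monomials with $m\neq n$ and fixes those with $m=n$; since $C^*(\pi,U)$ is their closed span, $E^\pi$ is a conditional expectation onto $\sum_{n=0}^\infty U^{*n}\pi(A)U^n$ with the required annihilation property.

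The substance is in $iii)\Rightarrow i)$. With $\Phi=\pi\rtimes U$ surjective, I would first check that $\Phi$ intertwines the two expectations, $\Phi\circ E=E^\pi\circ\Phi$, where $E$ is the canonical expectation of $C^*(A,\alpha,J)$ onto $\iota(B)$: both sides send $u^{*n}\iota(a)u^m$ to $\delta_{nm}U^{*n}\pi(a)U^n$, hence agree everywhere by linearity and continuity. Next I would use that $E$ is faithful: for a state $\omega$ and $x\ge 0$ with $E(x)=0$, the nonnegative continuous function $z\mapsto\omega(\gamma_z(x))$ integrates to $0$, hence vanishes at $z=1$, so $\omega(x)=0$ for all $\omega$ and $x=0$. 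Finally I would invoke the key input that $\Phi$ is \emph{injective on the core} $\iota(B)$: since $\pi$ is faithful and $J=\{a\in A: U^*U\pi(a)=\pi(a)\}$, Proposition \ref{representations vs reversible extensions} shows $\widetilde\pi$ is faithful, and $\Phi|_{\iota(B)}$ is exactly $\widetilde\pi$ under $\iota(B)\cong B$. Combining: if $\Phi(x)=0$ then $\Phi(x^*x)=0$, so $\Phi(E(x^*x))=E^\pi(\Phi(x^*x))=0$; injectivity on the core forces $E(x^*x)=0$, and faithfulness of $E$ gives $x^*x=0$, i.e.\ $x=0$. Thus $\Phi$ is an isomorphism.

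The main obstacle is the direction $iii)\Rightarrow i)$, and within it the injectivity of $\Phi$ on the fixed-point algebra $\iota(B)$; this is exactly where faithfulness of $\pi$ together with the \emph{sharp} identity $J=\{a: U^*U\pi(a)=\pi(a)\}$ (not a mere inclusion) is used, funnelled through Proposition \ref{representations vs reversible extensions}. Everything else is standard conditional-expectation bookkeeping, the only care being the verification of the intertwining relation on the spanning monomials and the faithfulness of the Haar-averaged expectation $E$.
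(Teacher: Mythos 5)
Your proposal is correct and takes essentially the same route as the paper: both identify $C^*(A,\alpha,J)$ with $C^*(B,\beta)$, obtain injectivity of $\pi\rtimes U$ on the core $\iota(B)$ from the last part of Proposition \ref{representations vs reversible extensions} (this is where faithfulness of $\pi$ and the sharp equality for $J$ enter), and then use the conditional expectation from iii) to propagate injectivity from the core to all of $C^*(A,\alpha,J)$, while i)$\Rightarrow$ii)$\Rightarrow$iii) are handled by transporting and integrating the gauge action. If anything, your write-up of iii)$\Rightarrow$i) is more complete than the paper's: the paper reduces to checking $\pi\rtimes U(b)\neq 0$ for nonzero $b$ in the dense $*$-subalgebra of finite sums \eqref{general form of a guy in cross} (which by itself is not sufficient for injectivity of a $*$-homomorphism), and your intertwining relation $\Phi\circ E=E^\pi\circ\Phi$ together with faithfulness of $E$ supplies precisely the standard limiting argument that closes this step.
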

\begin{proof}
Implications i) $\Rightarrow$ ii) $\Rightarrow$ iii) follow easily from the discussion above and the last part of Proposition \ref{representations vs reversible extensions}.  To show iii) $\Rightarrow$ i) assume that $E^\pi$ is a  conditional expectation from $C^*( \pi,U)$ onto $\sum_{n=0}^\infty U^{*n} \pi(A) U^n$, as  described above. By the last part of Proposition \ref{representations vs reversible extensions},  $\pi\rtimes U|_{B}=\widetilde{\pi}\circ \iota$ is a faithful representation of $B$. 
By part iii) of Proposition \ref{on existence of crossed products} it suffices to show that $\pi\rtimes U(b)\neq 0$   if  $b$ is a non-zero element  given by \eqref{general form of a guy in cross}. But if $b\neq 0$ is given by \eqref{general form of a guy in cross}, then $u^{*k} \iota(b_{-k})\neq 0$ or $ \iota(b_{k})u^k\neq 0$ for some $k=0, \pm 1,..., \pm n$. Fix such $k$. Since
$$
0< u^{*k} \iota(b_{-k})  \iota(b_{-k})^*u^{k} + \iota(b_{k})u^ku^{*k}\iota(b_{k^*})= \iota(\beta_*^k(b_{-k} b_{-k}^*) + b_k\overline{\beta}_*^k(1)b_k^*)\in \iota(B)
$$
we get  
$$
0< \widetilde{\pi}\left( \iota(\beta_*^k(b_{-k} b_{-k}^*) + b_k\overline{\beta}_*^k(1)b_k^*)\right)\leq E^\pi\left( \pi\rtimes U(b b^*)\right).
$$ 
This implies that  $\pi\rtimes U(b)\neq 0$. 
\end{proof}

\subsection{Gauge-invariant ideals}

Representations $\pi\rtimes U$ of $C^*( A,\alpha,J)$ satisfying condition ii) in Proposition \ref{gauge-invariance equivalence} are sometimes called \emph{gauge-invariant representations}. Kernels of such representations are  \emph{gauge-invariant ideals} in $C^*( A,\alpha,J)$, i.e. ideals that are invariant under the gauge action on $C^*( A,\alpha,J)$.  Clearly, such ideals form a lattice. 
We describe this lattice in two steps.
\begin{prop}\label{ideals in a reversible case} Let $(B,\beta)$ be a reversible $C^*$-dynamical system. Relations
\begin{equation}\label{from ideals to invariant ideals}
 \I=\{b \in B: \iota(b) \in \II\},
 \end{equation}
\begin{equation}\label{gauge-invariant ideal from J-pair}
  \II = \sum_{k=1}^{\infty} u^{*k}\iota(\I) + \iota(\I)+ \sum_{k=1}^{\infty} \iota(\I) u^{k}, 
\end{equation}
establish an order isomorphism between the  lattices of   gauge-invariant ideals $\II$  in $C^*(B,\beta)$  and   invariant ideals $\I$ in $(B,\beta)$. Moreover, under the above correspondence  we have
\begin{equation}\label{quotient isomorphism on B level}
C^*(B,\beta)/\II\cong C^*(B/\I,\beta_{\I}).
\end{equation}
 \end{prop}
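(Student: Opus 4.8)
The plan is to check that \eqref{from ideals to invariant ideals} and \eqref{gauge-invariant ideal from J-pair} define mutually inverse, order-preserving maps between the two lattices, and then to deduce \eqref{quotient isomorphism on B level} from gauge-invariant uniqueness. Throughout I use that, $(B,\beta)$ being reversible, the covariant representation $(\iota,u)$ satisfies $u\iota(b)u^{*}=\iota(\beta(b))$ and $u^{*}\iota(b)u=\iota(\beta_{*}(b))$ (Proposition \ref{proposition to be used}), that $\iota$ is faithful with $\iota(B)$ the fixed-point algebra of the gauge action, and that $E(x)=\int_{\mathbb T}\gamma_{z}(x)\,d\mu$ is the (faithful) conditional expectation onto $\iota(B)$. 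The easy direction \eqref{from ideals to invariant ideals} is immediate: for gauge-invariant $\II$ the set $\I=\iota^{-1}(\II)$ is an ideal of $B$, and since $\iota(\beta(b))=u\iota(b)u^{*}$ and $\iota(\beta_{*}(b))=u^{*}\iota(b)u$ lie in $\II$ whenever $\iota(b)\in\II$ (as $u,u^{*}\in M(C^{*}(B,\beta))$), we get $\beta(\I)\subseteq\I$ and $\beta_{*}(\I)\subseteq\I$, i.e.\ $\I$ is invariant by Lemma \ref{helping lemma}.

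Conversely, given an invariant ideal $\I$, I would show that the set $\II_{\I}$ on the right of \eqref{gauge-invariant ideal from J-pair} is a gauge-invariant ideal. Self-adjointness and gauge-invariance are clear. That $\II_{\I}$ is two-sided follows from the collapsing formula from the proof of Proposition \ref{on existence of crossed products} (cf.\ Proposition \ref{iteration of representations}): $(u^{*k}\iota(a)u^{l})(u^{*m}\iota(c)u^{n})=u^{*k}\iota\big(a\,\overline\beta^{\,l-m}(\overline\beta^{\,m}(1)c)\big)u^{\,n+l-m}$ for $m\le l$. If the coefficient of either factor lies in $\I$, then so does the resulting coefficient, because $\I$ is an ideal absorbing multipliers and is $\beta$-invariant; and any monomial $u^{*p}\iota(a)u^{q}$ with $a\in\I$ reduces, via $u^{*}\iota(a)u=\iota(\beta_{*}(a))$ and $\beta_{*}(\I)\subseteq\I$, to $u^{*(p-q)}\iota(\beta_{*}^{q}(a))$ or $\iota(\beta_{*}^{p}(a))u^{q-p}$, which lies in $\II_{\I}$. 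Hence $\II_{\I}$ is a closed two-sided ideal.

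It remains to see the two assignments invert one another. Applying $E$ to $\II_{\I}$ annihilates every off-diagonal term $u^{*k}\iota(\I)$ and $\iota(\I)u^{k}$ and fixes $\iota(\I)$, so $E(\II_{\I})=\iota(\I)$ and therefore $\iota^{-1}(\II_{\I})=\I$ by faithfulness of $\iota$. For the other composition, start with gauge-invariant $\II$ and set $\I=\iota^{-1}(\II)$; then $\II_{\I}\subseteq\II$ is clear since $\iota(\I)\subseteq\II$. The reverse inclusion is the gauge-invariant-uniqueness core: using the spectral projections $E_{d}(x)=\int_{\mathbb T}\bar z^{\,d}\gamma_{z}(x)\,d\mu$, which map $\II$ into $\II\cap C_{d}$ for the spectral subspaces $C_{d}=\{x:\gamma_{z}(x)=z^{d}x\}$, one has $C_{0}=\iota(B)$, $C_{d}=\overline{\iota(B)u^{d}}$ and $C_{-d}=\overline{u^{*d}\iota(B)}$ for $d>0$. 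Multiplying $y\in\II\cap C_{d}$ by $u^{*d}$ lands in $\II\cap\iota(B)=\iota(\I)$, and $y=(yu^{*d})u^{d}$ by $u^{d}u^{*d}u^{d}=u^{d}$, so $\II\cap C_{d}=\iota(\I)u^{d}$ and likewise $\II\cap C_{-d}=u^{*d}\iota(\I)$. Since the Fej\'er means of $\sum_{d}E_{d}(x)$ converge to $x$ and each $E_{d}(x)$ lies in $\II_{\I}$, we conclude $x\in\II_{\I}$, hence $\II=\II_{\I}$. Both assignments are plainly monotone, so they constitute an order isomorphism of lattices.

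Finally, for \eqref{quotient isomorphism on B level} I would pass to the quotient. The subsystem $(B/\I,\beta_{\I})$ is reversible (Lemma \ref{helping lemma}), and the pair $(\bar\iota,\tilde u)$ with $\bar\iota=q_{\II}\circ\iota$ (which factors through $B/\I$ since $\ker(q_{\II}\circ\iota)=\I$) and $\tilde u=q_{\II}(u)$ is a faithful covariant representation of $(B/\I,\beta_{\I})$ generating $C^{*}(B,\beta)/\II$. As $\II$ is gauge-invariant, the gauge action descends to $C^{*}(B,\beta)/\II$, fixing $\bar\iota$ and scaling $\tilde u$; thus condition ii) of Proposition \ref{gauge-invariance equivalence} holds and $\bar\iota\rtimes\tilde u$ is an isomorphism $C^{*}(B/\I,\beta_{\I})\cong C^{*}(B,\beta)/\II$. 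The main obstacle is the reverse inclusion $\II\subseteq\II_{\I}$, i.e.\ the spectral-subspace identification $\II\cap C_{d}=\iota(\I)u^{d}$; the rest is bookkeeping with the covariance relations and the conditional expectation.
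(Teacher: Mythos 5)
Your proof is correct, and while it follows the same overall skeleton as the paper (monomial computations to show $\II_{\I}$ is an ideal, the conditional expectation for the easy inclusion, gauge-uniqueness for the hard parts), it differs in how the two injectivity steps are handled. The paper disposes of both by citing the general gauge-invariant uniqueness theorem \cite[Theorem 4.5.1]{bo}: for \eqref{quotient isomorphism on B level} it builds an epimorphism $\Psi\colon C^*(B,\beta)\to C^*(B/\I,\beta_{\I})$ from the covariant representation $(\iota_{\I}\circ q_{\I},u_{\I})$, factors it through $\II$, and invokes that theorem to get injectivity of $\Psi_{\II}$; and for the equality of a gauge-invariant ideal $\II$ with the ideal $\II_{\I}$ built from $\I=\iota^{-1}(\II)$ it compares the quotients $C^*(B,\beta)/\II_{\I}\to C^*(B,\beta)/\II$ and cites the theorem again. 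You instead prove the hard inclusion $\II\subseteq\II_{\I}$ by hand: the spectral-subspace identification $C_d=\overline{\iota(B)u^{d}}$, the observation that $y\in\II\cap C_d$ gives $yu^{*d}\in\II\cap\iota(B)=\iota(\I)$ and $y=(yu^{*d})u^{d}$ via the power-partial-isometry identity $u^{d}u^{*d}u^{d}=u^{d}$, and Fej\'er--Ces\`aro convergence of the means of $\sum_d E_d(x)$; this amounts to an inline proof of the gauge-invariant uniqueness theorem in this setting. For the quotient isomorphism you then map in the opposite direction to the paper: you exhibit $(q_{\II}\circ\iota,\, q_{\II}(u))$ as a faithful covariant representation of $(B/\I,\beta_{\I})$ inside $C^*(B,\beta)/\II$ and apply the paper's own Proposition \ref{gauge-invariance equivalence}, (ii)$\Rightarrow$(i) --- legitimate, since that proposition precedes this one and does not depend on it. The paper's route buys brevity through standard machinery; yours buys self-containedness and an explicit picture of the spectral components of a gauge-invariant ideal. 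One step you compress: condition ii) of Proposition \ref{gauge-invariance equivalence} requires the \emph{equality} $(\ker\beta_{\I})^\bot=\{a:\; \tilde u^{*}\tilde u\,\bar\iota(a)=\bar\iota(a)\}$, not merely the covariance inclusion; this is indeed automatic for a faithful covariant representation of a reversible system, because $\tilde u^{*}\tilde u$ is the image of the central projection given by the extended complete transfer operator applied to $1$ (Proposition \ref{proposition to be used} iv) together with Proposition \ref{lemma for extensions of transfers}), but it deserves an explicit sentence.
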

\begin{proof}
Let $\I$ be an invariant ideal in $(B,\beta)$ and define  $\II$ by \eqref{gauge-invariant ideal from J-pair}.
To see that $\II$ is an ideal  in $C^*( B,\beta)$ recall that the projections $u^{*k}u^k$, $k\in \N$, commute with elements of $\iota(B)$, and $u^*\iota(b)u=\iota(\beta_*(b))$, $b\in B$, see Propositions \ref{proposition to be used}, \ref{lemma for extensions of transfers}. Hence for  $a,b\in B$ and $m\geq n$, we get
\begin{align*}
\iota(a)u^m  \cdot u^{*n}\iota(b)&= \iota\left(a \beta^{m-n}\big(\overline{\beta}^{n}(1)b\big)\right)u^{m-n}, 
\\
u^{*n}\iota(b) \cdot \iota(a)u^m &= \iota\big(\beta_*^{n}(ba )\big)u^{m-n}.
\end{align*}
Thus (by invariance of $\I$) if either $a$ or $b$ is in $\I$ then both of the above products are in $\II$.  By passing to adjoints,  we arrive at a similar conclusion for $m <n$. Accordingly, in view of Proposition \ref{on existence of crossed products} iii), $\II$ is an ideal in $C^*( B,\beta)$. Clearly, $\II$  is gauge-invariant and \eqref{from ideals to invariant ideals} holds. To show \eqref{quotient isomorphism on B level}  denote by $(\iota_{\I}, u_{\I})$ the universal representation generating the crossed product  $C^*(B/\I,\beta_{\I})$ and note that $(\iota_{\I}\circ q_{\I}, u_{\I})$ is a covariant representation of $(B,\beta)$. Therefore we have the epimorphism 
$$
C^*(B,\beta)\stackrel{\Psi}{\longmapsto}  C^*(B/\I,\beta_{\I})\quad\textrm{where}\quad
\Psi(\iota(b))=\iota_{\I}( q_{\I}(b)), \,\,\, \Psi(u)=u_{\I}.
$$
Since $\II\subseteq \ker\Psi$ this mapping factors through to an epimorphism
$$
C^*(B,\beta)/\II\stackrel{\Psi_{\II}}{\longmapsto}  C^*(B/\I,\beta_{\I}) \quad\textrm{where}\quad \Psi_{\II}\circ q_{\II}=\Psi.
$$
As $\II$ is gauge-invariant, the gauge action on $C^*(B,\beta)$ factors through to  $C^*(B,\beta)/\II$. With this  circle  action on $C^*(B,\beta)/\II$, $\Psi_{\II}$ becomes a gauge-invariant epimorphism which is injective on the fixed point algebra $q_{\II}(\iota(B))$. Hence  by the general gauge-invariant uniqueness theorem, cf., for instance, \cite[Theorem 4.5.1]{bo},  $\Psi_{\II}$ is  is injective on $C^*(B,\beta)/\II$. This proves \eqref{quotient isomorphism on B level}. 
 
Let now $\II'$ be an arbitrary  ideal in $C^*(B,\beta)$ and put $\I:=\{b \in B: \iota(b) \in \II'\}$. Then $\iota(\beta(\I))=u\iota(\I)u^*\subseteq \II'$ and  $\iota(\beta_*(\I))=u^*\iota(\I)u\subseteq \II'$. Thus $\I$ is invariant in $(B,\beta)$ by Lemma \ref{helping lemma}. Plainly, the gauge-invariant ideal   $\II$ given by \eqref{gauge-invariant ideal from J-pair} is contained in  $\II'$. Accordingly, the identity factors through to the epimorphism
$$
C^*(B,\beta)/\II \stackrel{\Phi}{\longmapsto}  C^*(B,\beta)/\II'.
$$
If $\II'$ is gauge-invariant then both of the algebras $C^*(B,\beta)/\II$ and $C^*(B,\beta)/\II'$ are equipped with circle actions induced from $C^*(B,\beta)$. Epimorphism $\Phi$ is gauge-invariant with respect to these actions and $\Phi$ is injective on the fixed point algebra of the circle action on $C^*(B,\beta)/\II$. Hence  $\Phi$ is an isomorphism, again by \cite[Theorem 4.5.1]{bo}. Consequently, $\II=\II'$.
\end{proof}
\begin{rem}
The last part of  the  proof above shows that relation \eqref{from ideals to invariant ideals}
 establishes an order preserving surjection from the lattice of all ideals in $C^*(B,\beta)$ onto the lattice of invariant ideals in $(B,\beta)$. Thus the lattice of gauge-invariant ideals in  $C^*(B,\beta)$ can be regarded as \emph{an order-retract} of the lattice of all ideals.
\end{rem}
Combining the above statement with Theorem \ref{invariant ideals and J-pairs thm} we obtain a general result.
\begin{thm}\label{lattices descriptions main thm}
Let $(A,\alpha)$ be a $C^*$-dynamical system and $J$ an ideal in $(\ker\alpha)^\bot$. We have an order isomorphism  from  the lattice of gauge-invariant  ideals $\II$ in $C^*( A,\alpha,J)$ onto the lattice of  $J$-pairs $(I,I')$ for $(A,\al)$. It is  determined by relations
\begin{equation}\label{J-pair from gauge-invariant ideal}
I=\{a \in A: \iota(a) \in \II\}, \qquad I'=\{a\in A: (1-u^*u) \iota(a) \in \II\},
\end{equation}
$$
\II \textrm{ is generated by }  \iota(I) + (1-u^*u)\iota(I').
$$ 
For ideals  satisfying the above relations  we have
$$
C^*(A,\alpha,J)/ \II \cong C^*(A/I,\alpha_I,q_I(I')).
$$
Restricting the above order isomorphism we get an order preserving bijective correspondence between ideals $\II$ in $C^*(A,\alpha,J)$ which are generated by their intersection with $\iota(A)$,  and $J$-invariant ideals $I$ in $(A,\alpha)$. This restricted correspondence  is  determined by relations
\begin{equation}\label{J-invariant ideals and A-generated ideals}
I=\{a \in A: \iota(a) \in \II\}, \qquad  \II=\clsp\{u^{*m}\iota(a)u^{n}: a\in I, n,m \in \N\},
\end{equation}
and  for the corresponding ideals we have
$$
C^*(A,\alpha,J)/ \II \cong C^*(A/I,\alpha_I,q_I(J)).
$$
\end{thm}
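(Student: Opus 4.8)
The plan is to derive both correspondences by composing the two order isomorphisms already at our disposal, so that the bulk of the argument is a translation of notation. Throughout I use the identification $C^*(A,\alpha,J)=C^*(B,\beta)$ of Proposition \ref{on existence of crossed products}(iii), where $(B,\beta)$ is the natural reversible $J$-extension of $(A,\alpha)$, together with the conventions of Remark \ref{identifications remark}; in particular $q=1-\overline{\beta}_*(1)$. Writing $(\iota,u)$ for the universal covariant representation of $(B,\beta)$, Proposition \ref{proposition to be used}(iv) gives $u^*u=\overline{\iota}(\overline{\beta}_*(1))$, whence $1-u^*u=\overline{\iota}(q)$ and $(1-u^*u)\iota(a)=\iota(qa)$ for every $a\in A$.

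Next I would invoke Proposition \ref{ideals in a reversible case}, giving an order isomorphism $\II\mapsto\I=\{b\in B:\iota(b)\in\II\}$ between gauge-invariant ideals in $C^*(B,\beta)$ and invariant ideals in $(B,\beta)$, with $C^*(B,\beta)/\II\cong C^*(B/\I,\beta_{\I})$. Composing it with the order isomorphism of Theorem \ref{invariant ideals and J-pairs thm} between invariant ideals $\I$ in $(B,\beta)$ and $J$-pairs $(I,I')$ for $(A,\alpha)$ produces the asserted bijection onto $J$-pairs. Feeding $\I=\{b\in B:\iota(b)\in\II\}$ into the formulas \eqref{J-pair from invariant ideal} and using the identities of the previous paragraph turns them into $I=\{a\in A:\iota(a)\in\II\}$ and $I'=\{a\in A:(1-u^*u)\iota(a)\in\II\}$, i.e. \eqref{J-pair from gauge-invariant ideal}. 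For the quotient I would chain the two quotient statements with the equivalence $(B/\I,\beta_{\I})\cong(B_{(I,I')},\beta_{(I,I')})$ from Theorem \ref{invariant ideals and J-pairs thm} and the identification of Proposition \ref{on existence of crossed products}(iii) applied to $(A/I,\alpha_I)$ with the ideal $q_I(I')$, obtaining $C^*(A,\alpha,J)/\II\cong C^*(A/I,\alpha_I,q_I(I'))$.

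The one step that genuinely requires work, and the part I expect to be the main obstacle, is matching the two descriptions of $\II$. I would show that the gauge-invariant ideal attached to $\I$ by \eqref{gauge-invariant ideal from J-pair} coincides with the ideal $\II_0$ generated by $\iota(I)\cup(1-u^*u)\iota(I')=\iota(I)\cup\iota(qI')$. Since $I,qI'\subseteq\I$, clearly $\II_0\subseteq\II$. For the reverse inclusion I would use the covariance relation $u^{*k}\iota(b)u^{k}=\iota(\beta_*^k(b))$, valid for $b\in B$ by Proposition \ref{proposition to be used}(ii): as $\II_0$ is stable under left and right multiplication by the multipliers $u^{*k}$ and $u^{k}$, it contains $\iota(\beta_*^k(qI'))$ and $\iota(\beta_*^n(I))$ for all $k,n$. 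The explicit description of $\I\cap B_n$ in \eqref{invariant ideal from J-pair} then forces $\iota(\I)\subseteq\II_0$, and a further application of multiplier stability yields $\II\subseteq\II_0$ through \eqref{gauge-invariant ideal from J-pair}.

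Finally, for the restricted correspondence I would lean on Remark \ref{remark on A-generated ideal in B}: the invariant ideals in $(B,\beta)$ generated by their intersection with $A$ are exactly those attached to $J$-pairs of the form $(I,I+J)$, and under the isomorphism of Proposition \ref{ideals in a reversible case} these match precisely the gauge-invariant ideals $\II$ generated by $\iota(A)\cap\II$; this last equivalence is checked by observing that $\iota(A)\cap\II=\iota(I)$ generates $\II$ if and only if $\I=\sum_{n=0}^\infty\beta_*^n(I)$. For such a pair $qI'=q(I+J)=qI\subseteq\I$, so the generation formula collapses to $\I=\sum_{n=0}^\infty\beta_*^n(I)$ and hence $\II=\clsp\{u^{*m}\iota(a)u^{n}:a\in I,\ n,m\in\N\}$, which is \eqref{J-invariant ideals and A-generated ideals}. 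Since $I'=I+J$ gives $q_I(I')=q_I(J)$, the quotient specializes to $C^*(A/I,\alpha_I,q_I(J))$.
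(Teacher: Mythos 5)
Your proposal is correct and follows essentially the same route as the paper's proof: composing the order isomorphism of Proposition \ref{ideals in a reversible case} with that of Theorem \ref{invariant ideals and J-pairs thm}, chaining the quotient isomorphisms through $C^*(B/\I,\beta_{\I})\cong C^*(B_{(I,I')},\beta_{(I,I')})$, and handling the restricted correspondence via Remark \ref{remark on A-generated ideal in B} and $J$-pairs of the form $(I,I+J)$. The only difference is that you spell out the translation $(1-u^*u)\iota(a)=\iota(qa)$ and verify explicitly that the ideal generated by $\iota(I)+(1-u^*u)\iota(I')$ recovers $\II$ -- details the paper leaves implicit -- and these verifications are sound.
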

\begin{proof}
We obtain the desired order isomorphism by  composing the  isomorphisms from Proposition \ref{ideals in a reversible case} and Theorem \ref{invariant ideals and J-pairs thm}. Moreover, using (twice) the isomorphism from Proposition \ref{on existence of crossed products}  iii), the equivalence of $C^*$-dynamical systems from Theorem \ref{invariant ideals and J-pairs thm} and the isomorphism \eqref{quotient isomorphism on B level} we get
\begin{align*}
C^*(A,\alpha,J)/ \II &\cong C^*(B,\beta)/ \II \cong  C^*(B/\I,\beta_{\I})\cong C^*(B_{(I,I')},\beta_{(I,I')})
\\
& \cong C^*(A/I,\alpha_I,q_I(I')).
\end{align*}
This proves the first part of the assertion.
\\
For the second part  note that the ideal  $\II$ in $C^*(A,\alpha,J)$ generated by $I:=\II \cap \iota(A)$ corresponds, via the above isomorphism, to the $J$-pair $(I,I+J)$, see also Remark \ref{remark on A-generated ideal in B}. 
\end{proof}
\begin{cor}
Suppose $\ker\al$ is a complemented ideal in $A$.  Relations \eqref{J-invariant ideals and A-generated ideals}
establish an order isomorphism between the lattices of  gauge-invariant  ideals $\II$ in $C^*( A,\alpha)$  and invariant ideals $\I$ in $(A,\al)$. Under this correspondence we have
$$
C^*(A,\alpha)/\II\cong C^*(A/I,\alpha_{I}).
$$
In particular, in this case every gauge-invariant ideal $\II$ in  $C^*(A,\alpha)$ is generated by its intersection with $\iota(A)$.
\end{cor}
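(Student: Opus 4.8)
The plan is to read off the statement from Theorem \ref{lattices descriptions main thm} applied in the unrelative case $J=(\ker\alpha)^\bot$, exploiting the simplification that complementedness of $\ker\alpha$ forces on $J$-pairs.

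The key observation is that, by Remark \ref{remark on J-pairs}, when $A=\ker\alpha\oplus(\ker\alpha)^\bot$ and $J=(\ker\alpha)^\bot$, every ideal $I'$ satisfying the $J$-pair relations \eqref{J-pair relations} must equal $I+J$. Thus the assignment $(I,I')\mapsto I$ is an order isomorphism from the lattice of $J$-pairs for $(A,\alpha)$ onto the lattice of invariant ideals $I$ in $(A,\alpha)$ (these being the same as the $J$-invariant ideals, since $J=(\ker\alpha)^\bot$), with order-preserving inverse $I\mapsto(I,I+J)$. Composing this with the order isomorphism of Theorem \ref{lattices descriptions main thm} between gauge-invariant ideals $\II$ in $C^*(A,\alpha)$ and $J$-pairs $(I,I')$ delivers the claimed order isomorphism between gauge-invariant ideals and invariant ideals.

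Since every $J$-pair now has the form $(I,I+J)$, the full order isomorphism of Theorem \ref{lattices descriptions main thm} agrees with the restricted correspondence described there for ideals generated by their intersection with $\iota(A)$, which is governed by \eqref{J-invariant ideals and A-generated ideals}. This simultaneously identifies the correspondence with the relations \eqref{J-invariant ideals and A-generated ideals} and establishes the final ``in particular'' clause that every gauge-invariant ideal is generated by its intersection with $\iota(A)$. For the quotient identity I would insert $I'=I+(\ker\alpha)^\bot$ into the formula $C^*(A,\alpha,J)/\II\cong C^*(A/I,\alpha_I,q_I(I'))$; as noted in the proof of Corollary \ref{complemented kernel corollary}, complementedness yields $q_I(I')=q_I((\ker\alpha)^\bot)=(\ker\alpha_I)^\bot$, so the right-hand side becomes $C^*(A/I,\alpha_I)$. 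The only point needing care is the collapse of $J$-pairs to single invariant ideals, which is exactly Remark \ref{remark on J-pairs}; everything else is a direct specialization of Theorem \ref{lattices descriptions main thm}.
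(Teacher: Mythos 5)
Your proposal is correct and follows essentially the same route as the paper's own proof: both invoke Remark \ref{remark on J-pairs} to see that complementedness of $\ker\al$ forces every $(\ker\alpha)^\bot$-pair to have the form $(I,I+(\ker\alpha)^\bot)$, then specialize Theorem \ref{lattices descriptions main thm} and use $q_I((\ker\alpha)^\bot)=(\ker\alpha_I)^\bot$ to obtain the quotient identification $C^*(A,\alpha)/\II\cong C^*(A/I,\alpha_{I})$. The paper's proof is just a more compressed version of exactly this argument.
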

\begin{proof}
Apply the fact that any $(\ker\alpha)^\bot$-pair in $(A,\alpha)$   is of the form $(I, I+(\ker\alpha)^\bot)$ where $I$ is invariant in $(A,\alpha)$, cf. Remark \ref{remark on J-pairs}. In particular, we have $q_I((\ker\alpha)^\bot)=(\ker\alpha_I)^\bot$.
\end{proof}
\subsection{Simplicity}
Now we use the above  result to study simplicity of $C^*(A,\alpha,J)$. We will juxtapose the  conditions we get with \cite[Theorem 4.1]{Szwajcar}, \cite[Corollary 1]{Ortega_Pardo} which provide    certain sufficient conditions for simplicity of $C^*(A,\alpha)$ when $\alpha$ is a monomorphism. To deal with non-injective $\alpha$ we will  use Corollary \ref{cor for freeness of quasinilpotents} from the next subsection, and a  notion of pointwise quasinilpotence, which seems to be a novelty in the present context.
\begin{defn}\label{minimality definitions}
 We say that  $\alpha$ is \emph{ minimal} provided that there are no nontrivial invariant  ideals  in $(A,\alpha)$. We call $\alpha$ \emph{pointwise quasinilpotent} if $\lim_{n\to \infty}\alpha^n(a)=0$, for all $a\in A$. A monomorphism  $\alpha$ is called \emph{inner} if  there is an isometry $v\in M(A)$ such that $\alpha(a)=v a v^*$, $a\in A$. 
\end{defn}
Note that if  $\alpha$ has no   positively invariant ideals then $\alpha$ is necessarily a minimal monomorphism. Moreover, if $\alpha$ is a monomorphism and $A$ is unital, it can be shown that $\alpha$  is minimal if and only if  there are no positively invariant  ideals  in $(A,\alpha)$, cf. \cite{Szwajcar}, \cite{Murphy2}.

 \begin{thm}\label{theorem on simplicity}
 If the  algebra $C^*(A,\alpha,J)$  is  simple then $J=(\ker\alpha)^\bot$,   $\alpha$ is  minimal and either $\alpha$ is pointwise quasinilpotent or $\alpha$ is a monomorphism and no power $\alpha^n$, $n>0$, is inner. 
 
 If $\alpha$ is minimal each of the following conditions imply that $C^*(A,\alpha)$ is simple:
 \begin{itemize}
 \item[i)]  $\alpha$ is  pointwise quasinilpotent,
\item[ii)]  $\alpha$ is injective,  $A$ is unital, and no power $\alpha^n$, $n>0$, is inner,
\item[iii)] $\alpha$ is injective with hereditary range, $A$ is separable, and no power $\alpha^n$, $n>0$, is inner.
\end{itemize}
\end{thm}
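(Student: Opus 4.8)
The plan is to read simplicity off the lattice description in Theorem \ref{lattices descriptions main thm}: a simple $C^*$-algebra has no nontrivial ideals, so in particular no nontrivial gauge-invariant ones, and the task is to translate the resulting constraints on $J$-pairs into the dynamical conditions of the statement. First I would dispose of the two easy necessary conditions. The pair $(\{0\},(\ker\alpha)^\bot)$ is always a $J$-pair for $(A,\alpha)$, while by \eqref{J-pair from gauge-invariant ideal} the zero ideal of $C^*(A,\alpha,J)$ corresponds to $(\{0\},J)$; the ideal attached to $(\{0\},(\ker\alpha)^\bot)$ is proper, since its intersection with $\iota(A)$ is $\{0\}\neq A$, so under the order isomorphism of Theorem \ref{lattices descriptions main thm} simplicity forces it to vanish and hence $J=(\ker\alpha)^\bot$. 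Next, each invariant ideal $I$ in $(A,\alpha)$ gives the $J$-pair $(I,I+J)$ and a gauge-invariant ideal generated by its intersection with $\iota(A)$ (the restricted correspondence in Theorem \ref{lattices descriptions main thm}); simplicity makes the latter trivial, so $I\in\{\{0\},A\}$ and $\alpha$ is minimal in the sense of Definition \ref{minimality definitions}.

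For the dichotomy I would split on injectivity of $\alpha$. If $\alpha$ is \emph{not} injective, let $K$ be the closed ideal generated by $\bigcup_{n\in\N}\ker\alpha^n$, that is the norm-closure of this increasing union. It is positively invariant because $\alpha(\ker\alpha^n)\subseteq\ker\alpha^{n-1}$, and negatively invariant because $a\in(\ker\alpha)^\bot$ with $\alpha(a)\in\ker\alpha^n$ forces $a\in\ker\alpha^{n+1}\subseteq K$; thus $K$ is invariant and nonzero. Minimality gives $K=A$, so every $a\in A$ is approximated by elements killed by a power of $\alpha$, and since $\alpha$ is contractive $\alpha^m(a)\to 0$, which is alternative 1). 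If instead $\alpha$ is injective, then $(\ker\alpha)^\bot=A=J$, so by Proposition \ref{on existence of crossed products} the multiplier $u$ is an isometry; it remains to exclude an inner power. Suppose $\alpha^n(a)=vav^*$ for an isometry $v\in M(A)$ and all $a\in A$, so that $\overline\alpha^n(1)=vv^*$ and $u^n\iota(a)u^{*n}=\overline\iota(v)\iota(a)\overline\iota(v)^*$. Setting $c:=\overline\iota(v)^*u^n$ one checks $c^*c=cc^*=1$, that $c$ commutes with $\iota(A)$, and that $\gamma_\lambda(c)=\lambda^n c$; thus $c$ is a unitary multiplier of gauge-degree $n\neq 0$ commuting with the embedded copy of $A$.

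The main obstacle is this last step: converting $c$ into a genuine nontrivial ideal. The difficulty is that $c$ need not commute with $u$, for one computes $cuc^*=\overline\iota(v^*\alpha(v))\,u$, so $c$ is central only when $v^*\alpha(v)=1$; hence $c$ is not \emph{a priori} a nontrivial central element. I would resolve this exactly as in the automorphic theory of Olesen--Pedersen and Kishimoto \cite{Kishimoto}: simplicity is invariant under Morita equivalence, so I pass to the minimal automorphic dilation of the injective system, in which the inner power persists and a central unitary of the form $u^n w^{-1}$ witnesses a proper Connes spectrum, forcing non-simplicity of the dilated crossed product by $\Z$ and therefore of its full corner $C^*(A,\alpha)$. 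Alternatively one runs the topological-freeness machinery of the next subsection, observing that an inner power destroys freeness of the reversible extension $(B,\beta)$.

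For sufficiency assume $\alpha$ minimal. Under 1), pointwise quasinilpotence yields, via Corollary \ref{cor for freeness of quasinilpotents}, topological freeness of $(B,\beta)$, hence by Corollary \ref{freeness implies gauge-invariance} every ideal of $C^*(A,\alpha)$ is gauge-invariant; minimality together with $J=(\ker\alpha)^\bot$ leaves only the $J$-pairs $(\{0\},(\ker\alpha)^\bot)$ and $(A,A)$, because for $I=\{0\}$ the requirement $I'\cap\ker\alpha=\{0\}$ forces $I'\subseteq(\ker\alpha)^\bot$ and hence $I'=(\ker\alpha)^\bot$ (cf. Remark \ref{remark on J-pairs}), so the only ideals are $\{0\}$ and $C^*(A,\alpha)$. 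Under 2), when $A$ is unital this is \cite[Theorem 4.1]{Szwajcar}, and when $\alpha$ has hereditary range and $A$ is separable the system $(A,\alpha)$ is already reversible and $C^*(A,\alpha)$ is the associated Cuntz--Pimsner algebra, whence simplicity follows from \cite[Corollary 1]{Ortega_Pardo}; in both cases one only has to match our notion of minimality with the hypotheses used there, which the remark following Definition \ref{minimality definitions} renders routine.
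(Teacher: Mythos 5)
Your treatment of the first two necessary conditions ($J=(\ker\alpha)^\bot$ via the pair $(\{0\},(\ker\alpha)^\bot)$, minimality via the pairs $(I,I+J)$) and your sufficiency argument for i) (where your explicit $J$-pair analysis, including the observation that $I'\cap\ker\alpha=\{0\}$ forces $I'\subseteq(\ker\alpha)^\bot$, is actually more detailed than the paper's one-line citation of Theorem \ref{lattices descriptions main thm} and Corollary \ref{cor for freeness of quasinilpotents}) match the paper. But the exclusion of inner powers — the technical heart of the theorem — is not proved in your proposal. You correctly identify the obstruction: $c=\overline{\iota}(v)^*u^n$ is a unitary of gauge-degree $n$ commuting with $\iota(A)$ but not with $u$. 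Your proposed fix, passing to the minimal automorphic dilation and invoking Olesen--Pedersen/Kishimoto Connes-spectrum theory, fails on three counts. First, the claim that ``the inner power persists'' in the dilation is exactly what needs proof: if $\alpha^n(a)=vav^*$, the images of $v$ under the maps $\alpha_k:A\to\varinjlim(A,\alpha)$ are incoherent, since $\alpha_k(v)=\alpha_{k+1}(\alpha(v))\neq\alpha_{k+1}(v)$ unless $\alpha(v)=v$, so there is no evident multiplier of the dilated algebra implementing the $n$-th power of the dilated automorphism. Producing an $\alpha$-equivariant implementing isometry is precisely the paper's hard work: it reduces to the case where $(A,\alpha)$ is reversible (via $B\cong\sum_k v^{*k}Av^k=A$), sets $v_m:=\overline{\alpha}^m_*(v)$, shows these are commuting isometries, and forms $w=v_0v_1\cdots v_{n-1}$ satisfying both $waw^*=\alpha^{n^2}(a)$ and the crucial relation $\overline{\alpha}(w)=w\overline{\alpha}(1)$; only then can one write down the explicit covariant representation on $H\oplus\overline{\alpha}(1)H\oplus\cdots\oplus\overline{\alpha}^{n^2-1}(1)H$ with $U^{n^2}=\overline{\pi}(w)$, which is faithful on $\iota(A)$ yet kills $\iota(wa)-u^{n^2}\iota(a)$, contradicting simplicity. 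Second, the Olesen--Pedersen machinery you appeal to requires $A$ separable (the paper explicitly warns, apropos \cite{Ortega_Pardo}, that this hypothesis ``seems to be essential''), whereas the necessity direction of the theorem assumes neither separability nor unitality. Third, your fallback — ``an inner power destroys freeness of $(B,\beta)$'' — does not yield non-simplicity: in this framework non-freeness implies non-simplicity only where the converse of Theorem \ref{topological freeness for reversible systems theorem} is available, namely for commutative $B$ (Example \ref{orbit representation}) or separable automorphic systems (Remark \ref{remark on pederesen olesen}).

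There is also a smaller gap in your dichotomy argument. You take $K=\overline{\bigcup_{n}\ker\alpha^n}$ and claim negative invariance because $a\in(\ker\alpha)^\bot$ with $\alpha(a)\in\ker\alpha^n$ lies in $\ker\alpha^{n+1}$; but negative invariance requires $(\ker\alpha)^\bot\cap\alpha^{-1}(K)\subseteq K$, and your check only covers $a$ whose image lies in the algebraic union, not in its closure. This is repairable: either use the paper's ideal $I=\{a\in A:\lim_{n\to\infty}\alpha^n(a)=0\}$, which is closed and manifestly invariant (the paper's route), or note that $K=I$ by functional calculus (for $a\geq 0$ in $I$, the element $(a-\epsilon)_+$ lies in $\ker\alpha^m$ once $\|\alpha^m(a)\|<\epsilon$). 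With that repair your deduction of pointwise quasinilpotence from $K=A$ is fine, but as written the invariance of $K$, and hence the applicability of minimality to it, is not established.
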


\begin{proof}
 As  $(\{0\}, J)$ and $(\{0\},(\ker\alpha)^\bot)$ are always  $J$-pairs,  Theorem \ref{lattices descriptions main thm} implies  that $C^*(A,\alpha,J)$  is not simple unless $J=(\ker\alpha)^\bot$. Similarly,   $C^*(A,\alpha)$ is not  simple unless there are no invariant ideals in $(A,\alpha)$.
Moreover,  one readily sees that 
$$
I:=\{a\in A: \lim_{n\to \infty} \alpha^n(a)=0\}
$$
is an invariant ideal in $(A,\alpha)$. Thus if $C^*(A,\alpha)$ is simple then $\alpha$ is minimal and either $I=A$ or $I=\{0\}$, that is either $\alpha$ is pointwise quasinilpotent or $\alpha$ is a monomorphism. 

Suppose then that $\alpha$ is a monomorphism and   for some  $n>0$, $\alpha^n$ is inner so that  $\alpha^n(a)=v a v^*$, for all $a\in A$, where   $v\in M(A)$ is an isometry. We claim that we may reduce our considerations to the case  $(A,\alpha)$ is a reversible $C^*$-dynamical system. Indeed, if $(B,\beta)$ is the natural reversible extension of $(A,\alpha)$ then the natural reversible extension of $(A,\alpha^n)$ can be identified with $(B,\beta^n)$. But  $(id, v)$ is a faithful covariant representation of $(A,\alpha^n)$ in $A$, cf. Remark \ref{remark for one proof and for Bialorusy}. Thus by Proposition \ref{representations vs reversible extensions} we have the isomorphism $B\cong \sum_{k=0}^{\infty}v^{*k}Av^{k}
=A$ under which  $\beta^n(b)=vbv^*$, $b\in B$, and $v\in M(B)$. This proves our claim. Let us then assume that $(A,\alpha)$ is reversible. In view of  Proposition \ref{proposition to be used} we have $\overline{\alpha}^n_*(a)=v^*a v$ for all $a\in M(A)$. In particular  $\overline{\alpha}^n_*(v)=v^*v v=v$.
 For each $m=1,...,n$ we put $v_m:=\overline{\alpha}^m_*(v)$. Note that for  $a\in A$ we have
$$
v_m a=\overline{\alpha}^m_*(v) a= \overline{\alpha}^m_*(v \alpha^m(a))= \overline{\alpha}^m_*( \alpha^{m+n}(a)v)=\alpha^{n}(a) \overline{\alpha}^m_*(v)=\alpha^{n}(a) v_m.
$$
Since $ \overline{\alpha}^n(1) v=v$, all the more  $ \overline{\alpha}^m(1) v=v$ and thus we get
$$
v_m^* v_m=\overline{\alpha}^m_*\left(v^*\overline{\alpha}^m(\overline{\alpha}^m_*(v))\right)= \overline{\alpha}^m_*(v^* \overline{\alpha}^m(1) v\overline{\alpha}^m(1))=\overline{\alpha}^m_*(v^*  v)= \overline{\alpha}^m_*(1)=1. 
$$
Thus $v_m\in M(A)$ are isometries such that $\alpha^{n}(a)=v_m a v_m^*$ for all $a\in A$. These isometries  commute. Indeed,  $\overline{\alpha}^n(v_k)=v_k v_k v_k^*=v_k \overline{\alpha}^n(1)=v_k v_m v_m^*$ and $\overline{\alpha}^n(v_k)=v_m v_k v_m^*$ imply that $v_kv_m=v_k v_m$, for $m,k=1,...,n$. Moreover,  for $m=1,...,n$ (and $v_0=v$) we have  $\overline{\alpha}(v_m)=\overline{\alpha}(1)v_{m-1}\overline{\alpha}(1)=v_{m-1}\overline{\alpha}(1)$. Thus putting 
$$
w:=v_0v_1...v_{n-1}
$$
we get an isometry in $M(A)$ such that
\begin{equation}\label{relacje to be used}
waw^*=\alpha^{n^2}(a) \quad a\in A,\quad \textrm{ and } \quad \overline{\alpha}(w)=w\overline{\alpha}(1).
\end{equation}
Now assume that $A$ is faithfully represented as a $C^*$-subalgebra of $\B(H)$ acting non-degenerately on $H$. Put $\H:=H\oplus \overline{\alpha}(1)H \oplus ... \oplus  \overline{\alpha}^{n^2-1}(1)H$. Using \eqref{relacje to be used} one readily checks that the formulae
$$
\pi(a):=\textrm{diag}(a, \alpha(a),...,\alpha^{n^2-1}(a)), \quad 
U=\left(\begin{array}{ccccc} 
0 &  \overline{\alpha}(1) & 0 &... & 0\\
0 &  0 & \overline{\alpha}^2(1) &... & 0\\
\vdots & \vdots & \vdots & \ddots & \vdots\\
0 &  0 & 0 &... & \overline{\alpha}^{n^2-1}(1)\\
w   &  0 & 0 &... & 0\\
\end{array}\right)
$$ 
define a covariant representation $(\pi, U)$ of $(A,\alpha)$ on $\H$. Moreover, $U^{n^2}=\overline{\pi}(w)$ where $\overline{\pi}$ is the extension of $\pi$ onto $M(A)$. Hence $(\pi, U)$ gives rise to a representation $\pi\rtimes U$ of $C^*(A,\alpha)$ which is faithful on $\iota(A)$ but not globally faithful because for any $a \in A$ we have
$$
(\pi\rtimes U)( \iota(wa)- u^{n^2}\iota(a))=0.
$$
This contradicts the simplicity of $C^*(A,\alpha)$.

Part i)  follows from Theorem \ref{lattices descriptions main thm} and Corollary \ref{cor for freeness of quasinilpotents} (which we prove in next section).
Part ii)  follows from \cite[Theorem 4.1]{Szwajcar}, and part iii) follows from \cite[Corollary 1]{Ortega_Pardo} (note that  the proof of \cite[Theorem 1]{Ortega_Pardo} is based on results of \cite{OlesenPedersen3} which were proved under assumption that $A$ is separable and this assumption seems to be essential).
\end{proof}
\subsection{Topological freeness for reversible $C^*$-dynamical systems}\label{Topological freeness}
Throughout this subsection $(B,\beta)$ stands for a reversible $C^*$-dynamical system. One can think of $(B,\beta)$ as of the natural $J$-reversible extension of a certain (not necessarily reversible) $C^*$-dynamical system $(A,\alpha)$. We will adopt this viewpoint in Corollary \ref{cor for freeness of quasinilpotents} below. 

Let $\widehat{B}$ be the spectrum of $B$ equipped with the Jacobson topology. As it is generally accepted, we will abuse notation and use $\pi$ to denote both   an irreducible representation  of $B$  and its equivalence class  in $\widehat{B}$. Since $\beta_*(B)=(\ker\beta)^\bot$ is an  ideal in $B$ and $\beta(B)=\overline{\beta}(1)B\overline{\beta}(1)$ is a hereditary subalgebra of $B$ we have  natural identifications  of spectra of $\beta_*(B)$ and $\beta(B)$ with open subsets of $\widehat{B}$: 
$$
\widehat{\beta_*(B)}=\{\pi \in \widehat{B}: \pi(\beta_*(B))\neq 0\},\qquad \widehat{\beta(B)}=\{\pi \in \widehat{B}: \pi(\beta(B))\neq 0\}. 
$$
With these identifications the homeomorphisms dual  to the mutually inverse  isomorphisms $
\beta: \beta_*(B) \to \beta(B)$  and $
\beta_*: \beta(B) \to \beta_*(B)$ yield   partial homeomorphisms of $\widehat{B}$:
$$
\widehat{\beta}: \widehat{\beta(B)} \to \widehat{\beta_*(B) }, \qquad \widehat{\beta}^{-1}=\widehat{\beta_*}: \widehat{\beta_*(B)} \to \widehat{\beta(B) },
$$
cf. also \cite{kwa-leb}, \cite{kwa-interact}. 
More precisely, let $\pi: B\to \B(H)$  be  a representation. If  $\pi \in \widehat{\beta_*(B)}$ then  $\widehat{\beta_*}(\pi)$ is the unique (up to unitary equivalence)  irreducible extension of $\pi\circ \beta_*: \beta(B)\to \B(H)$ up to  irreducible representation $\widehat{\beta_*}(\pi):B\to \B(\H)$ where
 $\widehat{\beta_*}(\pi)(\beta(B))\H=H\subseteq \H$. If  $\pi \in \widehat{\beta(B)}$, then $\widehat{\beta}(\pi)$ is the unique  extension of $\pi\circ \beta: \beta_*(B)\to \B(\beta(B)H)$ up to the (irreducible) representation $\widehat{\beta}(\pi):B\to \B(\beta(B)H)$.
\begin{defn}\label{topolo disco}
We say that a partial homeomorphism $\varphi$ of a topological space $X$   is   {\em
topologically free} if  the set of its periodic points of any given period  $n>0$  has empty interior. 
\end{defn}

The following statements could be deduced from the  results of \cite{kwa}, in the same fashion as \cite[Theorem. 2.19]{kwa-interact}. However, we can omit the use of Hilbert bimodules  by reducing the problem to the unital case  treated  in \cite[Theorem 2.19]{kwa-interact}, cf. also \cite[Theorem 3.1]{kwa-leb}. 
\begin{thm}\label{topological freeness for reversible systems theorem}
 Let $(B,\beta)$ be a reversible $C^*$-dynamical system. If the dual   partial homeomorphism $\widehat{\beta}$  is topologically free then
 for every faithful covariant representation $(\pi, U)$ of $(B,\beta)$,  $\pi\rtimes U: C^*( B,\beta)\to C^*(\pi,U)$ is an isomorphism. 
\end{thm}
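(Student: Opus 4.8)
The plan is to reduce the assertion to the unital situation settled in \cite[Theorem 2.19]{kwa-interact}, thereby bypassing the Hilbert-bimodule machinery of \cite{kwa}. Since $\pi\rtimes U$ is surjective onto $C^*(\pi,U)$ by the defining universal property, only injectivity is at stake. I would first observe that the strictly continuous extension $\overline{\pi}:M(B)\to \B(H)$ of the faithful nondegenerate representation $\pi$ is again faithful: if $\overline{\pi}(m)=0$ then $\pi(mb)=\overline{\pi}(m)\pi(b)=0$ forces $mb=0$ for all $b\in B$, whence $m=0$ because $B$ is an essential ideal in $M(B)$. Using Proposition \ref{proposition to be used} (covariance being equivalent to $\overline{\pi}(\overline{\beta}_*(1))=U^*U$) one then checks that $(\overline{\pi},U)$ is a faithful covariant representation of the \emph{unital} reversible system $(M(B),\overline{\beta})$.

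The crux is to propagate topological freeness from $\widehat{\beta}$ to the dual partial homeomorphism $\widehat{\overline{\beta}}$ of $\widehat{M(B)}$. Here I would exploit that $B$ is an essential ideal of $M(B)$, so $\widehat{B}$ is an open \emph{dense} subset of $\widehat{M(B)}$. Because $\beta(B),\beta_*(B)\subseteq B$, both $\widehat{\overline{\beta}}$ and its inverse map the parts of $\widehat{B}$ lying in their domains back into $\widehat{B}$; concretely, a point $\pi\in\widehat{B}$ lies in the domain of $\widehat{\overline{\beta}}$ exactly when $\pi(\beta(B))\neq 0$, and then $\widehat{\overline{\beta}}(\pi)(\beta_*(B))=\pi(\overline{\beta}(\beta_*(B)))=\pi(\beta(B))\neq 0$, so $\widehat{\overline{\beta}}(\pi)\in\widehat{B}$. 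Thus $\widehat{B}$ is $\widehat{\overline{\beta}}$-invariant and the restriction of $\widehat{\overline{\beta}}$ to $\widehat{B}$ is precisely $\widehat{\beta}$. Consequently, were the period-$n$ points of $\widehat{\overline{\beta}}$ to have nonempty interior $V\subseteq\widehat{M(B)}$, then $V\cap\widehat{B}$ would be a nonempty open set of period-$n$ points of $\widehat{\beta}$, contradicting the hypothesis. Hence $\widehat{\overline{\beta}}$ is topologically free.

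With this in hand I would apply \cite[Theorem 2.19]{kwa-interact} to the unital system $(M(B),\overline{\beta})$ and the faithful covariant representation $(\overline{\pi},U)$, obtaining that $\overline{\pi}\rtimes U:C^*(M(B),\overline{\beta})\to C^*(\overline{\pi},U)$ is an isomorphism. Finally I would transport injectivity downstairs. By Proposition \ref{on existence of crossed products}(ii) the algebra $C^*(B,\beta)$ sits as an ideal in $C^*(M(B),\overline{\beta})$, and under this identification the universal pair $(\iota,u)$ for $(B,\beta)$ is the restriction of the universal pair for $(M(B),\overline{\beta})$; hence $\overline{\pi}\rtimes U$ restricts on this ideal to $\pi\rtimes U$. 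An isomorphism is injective on every ideal, so $\pi\rtimes U$ is injective, and being surjective it is an isomorphism.

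The step I expect to be the main obstacle is the spectral argument of the second paragraph: matching the domains of $\widehat{\beta}$ and $\widehat{\overline{\beta}}$ on $\widehat{B}$ and verifying that $\widehat{\overline{\beta}}$ never pushes $\widehat{B}$-points into the corona $\widehat{M(B)}\setminus\widehat{B}$, so that periodicity in $\widehat{M(B)}$ genuinely detects periodicity in $\widehat{B}$. Everything else—faithfulness of $\overline{\pi}$, the covariance bookkeeping, and the ideal-restriction argument—is routine once the dictionary between $(B,\beta)$ and $(M(B),\overline{\beta})$ developed in Section \ref{endomorphism section} is invoked.
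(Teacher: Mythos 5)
Your proof is correct and follows essentially the same route as the paper: both pass to the unital reversible system $(M(B),\overline{\beta})$, transfer topological freeness to $\widehat{\overline{\beta}}$ using that $\widehat{B}$ is open, dense and invariant in $\widehat{M(B)}$, apply \cite[Theorem 2.19]{kwa-interact}, and then restrict the resulting isomorphism to $C^*(B,\beta)$ sitting as an ideal in $C^*(M(B),\overline{\beta})$. The only difference is that you spell out details (faithfulness of $\overline{\pi}$, forward invariance of $\widehat{B}$ under $\widehat{\overline{\beta}}$, and the periodic-point density argument) which the paper's proof leaves implicit.
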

\begin{proof}
The extended system $(M(B),\overline{\beta})$ is reversible, see Proposition \ref{lemma for extensions of transfers}. With natural identifications  $\widehat{B}$ is an open and dense subset of $\widehat{M(B)}$ and $\widehat{\beta}$ is restriction of the  partial homeomorphism $\widehat{\overline{\beta}}$ of $\widehat{M(B)}$ dual to $\overline{\beta}$. Thus   topological freeness of $\overline{\beta}$ is equivalent to topological freeness of  $\widehat{\overline{\beta}}$. Moreover, by  Lemma \ref{covariance for extensions} iii), cf. Remark \ref{from morphisms to representations},  $(\overline{\pi},U)$ is a faithful covariant representation of $(M(B),\overline{\beta})$. Hence by \cite[Theorem 2.19 i)]{kwa-interact},  $\overline{\pi}\rtimes U: C^*( M(B),\overline{\beta})\to C^*(\overline{\pi},U)$ is an isomorphism which (with  the identification from Proposition \ref{on existence of crossed products} ii)) restricts to the isomorphism $\pi\rtimes U: C^*( B,\beta)\to C^*(\pi,U)$.
\end{proof}
\begin{rem}\label{remark on pederesen olesen} There are reasons to believe  that  topological freeness and  the isomorphism property described in the assertion of Theorem \ref{topological freeness for reversible systems theorem} are  equivalent,  at least for a large class of $C^*$-algebras. For instance, by \cite[Theorem 10.4]{OlesenPedersen3} this is the case when $\beta$ is an automorphism and $B$ is separable.  We adapt a standard construction to  show that this is  true  when $B$ is commutative.
\end{rem}
  \begin{ex}[Orbit representation]\label{orbit representation} Let  $B$ be commutative. By an orbit in $(\widehat{B}, \widehat{\beta})$ we mean a maximal set  $\OO\subseteq \widehat{B}$ whose elements can be indexed by  $\Z$, $\Z_+$, $\Z_{-}$, or $\{1...,n\}$, for $n >0 $, in such a way that $\widehat{\beta}(x_k)=x_{k+1}$, if  $x_k$, $x_{k+1}\in \OO$, and $\widehat{\beta}(x_n)=x_{1}$ in the case $\OO$ is a periodic orbit with minimal period $n$.  To each $\OO$ we attach a covariant representation $(\pi_{\OO},U_{\OO})$ of  $(B,\beta)$  on the Hilbert space $\ell^2(\OO)$ with orthonormal basis $\delta_{x_k}$, $x_k\in \OO$. We let $\pi_{\OO}$ to be the diagonal representation given by $\pi_{\OO}(a) \delta_{x_k} = x_k(a)\delta_{x_k}$, $x_k\in \OO$. We define  $U$ as the shift
$$
 U_{\OO} \delta_{x_k}=\delta_{x_{k-1}} \textrm{ if } x_{k-1} \in \OO,\quad \textrm{ and }\quad U_{\OO} \delta_{x_k}=0 \textrm{  if } x_{k-1} \notin \OO, 
$$
 unless $\OO=\{x_1,...,x_n\}$ 
is periodic and $k=1$ in which case we put $U_{\OO} \delta_{x_1}=x_n$. The direct sums
$$
\pi:= \bigoplus \pi_{\OO},\qquad U:=\bigoplus U_{\OO}
$$  
over all orbits in $(\widehat{B}, \widehat{\beta})$ define a faithful covariant representation of $(B, \beta)$. 
Suppose now that $\widehat{\beta}$ is not topologically free. Hence there is a nonempty  open set $U$ consisting of periodic points of period $n$. Then for any non-zero element $b\in \bigcap_{x\in \widehat{B}\setminus U}\ker x$  we have 
$$
0 \neq b- bu^{n} \in \ker(\pi\rtimes U).
$$
Hence $\pi\rtimes U$ is not faithful. In particular, $\ker(\pi\rtimes U)$ is not gauge-invariant, cf. Proposition \ref{gauge-invariance equivalence}.
\end{ex} 
 
Since invariant ideals in $(B,\beta)$ correspond to  open sets invariant under  $\widehat{\beta}$,  Theorem \ref{topological freeness for reversible systems theorem} leads us  to a condition implying that all ideals in $C^*(B,\beta)$ are gauge-invariant. 

\begin{defn}\label{phi invariance}
Let $\Delta$  be a domain of a  partial homeomorphism $\varphi$ of $X$.   A set $V\subseteq X$ is $\varphi$-\emph{invariant}    if 
$
\varphi(V\cap \Delta)= V\cap \varphi(\Delta). 
$
We say that $\varphi$ is (residually) \emph{free}, if it is topologically free when restricted to any  closed invariant set.
\end{defn}
\begin{cor}\label{freeness implies gauge-invariance}
 If $\widehat{\beta}$  is free then  all ideals in $C^*(B,\beta)$ are gauge-invariant and thus  we have natural isomorphisms  between the lattices of the following objects: 
 \begin{itemize}
 \item[i)] ideals in $C^*(B,\beta)$,
 \item[ii)] invariant ideals in $(B,\beta)$,
 \item[iii)] open $\widehat{\beta}$-invariant subsets of $\beta$.
 \end{itemize}
Moreover, if $B$ is commutative then  $\widehat{\beta}$  is free  if and only if  all ideals in $C^*(B,\beta)$ are gauge-invariant.
 \end{cor}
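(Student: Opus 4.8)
The plan is to establish the forward implication by a reduction to Theorem \ref{topological freeness for reversible systems theorem}, and then to read off the lattice isomorphisms and the commutative converse from results already in place.

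First I would fix an arbitrary ideal $\II'$ in $C^*(B,\beta)$ and, as in the last part of the proof of Proposition \ref{ideals in a reversible case}, put $\I:=\{b\in B:\iota(b)\in\II'\}$; this is an invariant ideal in $(B,\beta)$, and the gauge-invariant ideal $\II$ defined from $\I$ by \eqref{gauge-invariant ideal from J-pair} satisfies $\II\subseteq\II'$. The whole point is to prove $\II=\II'$. Transporting through the isomorphism \eqref{quotient isomorphism on B level}, $C^*(B,\beta)/\II\cong C^*(B/\I,\beta_{\I})$, the image of $\II'$ becomes an ideal $\widetilde{\II'}$ whose intersection with the fixed-point algebra $\iota_{\I}(B/\I)$ is zero: if $\iota_{\I}(q_{\I}(b))\in\widetilde{\II'}$ then $\iota(b)\in\II'$, so $b\in\I$ and $q_{\I}(b)=0$.

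The key step is to recognise the quotient map $\rho:C^*(B/\I,\beta_{\I})\to C^*(B/\I,\beta_{\I})/\widetilde{\II'}$ as $\pi\rtimes U$ for the faithful covariant representation $\pi:=\rho\circ\iota_{\I}$, $U:=\rho(u_{\I})$ of $(B/\I,\beta_{\I})$; faithfulness of $\pi$ is precisely the trivial-intersection property, while covariance descends from $(\iota_{\I},u_{\I})$ through Proposition \ref{proposition to be used}. Now freeness enters: the spectrum $\widehat{B/\I}$ is the closed $\widehat{\beta}$-invariant set $\{\pi\in\widehat{B}:\pi(\I)=0\}$, and under the natural identifications $\widehat{\beta_{\I}}$ is the restriction of $\widehat{\beta}$ to it, so Definition \ref{phi invariance} gives that $\widehat{\beta_{\I}}$ is topologically free. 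Theorem \ref{topological freeness for reversible systems theorem} then forces $\pi\rtimes U=\rho$ to be an isomorphism, whence $\widetilde{\II'}=0$ and $\II'=\II$ is gauge-invariant. I expect the only delicate point to be the identification of $\widehat{\beta_{\I}}$ with the restriction of $\widehat{\beta}$ to the closed invariant set; it is exactly this identification that makes the freeness hypothesis the right strength to run the reduction uniformly over all ideals.

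Once every ideal is gauge-invariant, the lattice in i) coincides with that of gauge-invariant ideals, which Proposition \ref{ideals in a reversible case} matches with the invariant ideals of ii); the usual correspondence between ideals of $B$ and open subsets of $\widehat{B}$ then turns invariance into $\widehat{\beta}$-invariance and yields iii). For the commutative converse I would argue by contraposition: if $\widehat{\beta}$ fails to be free, choose a closed invariant set on which it is not topologically free, pass to the corresponding quotient system $(B/\I,\beta_{\I})$, and feed it into the orbit representation of Example \ref{orbit representation} to obtain a faithful covariant $(\pi,U)$ with $\ker(\pi\rtimes U)$ not gauge-invariant. Pulling this kernel back along the gauge-equivariant quotient $q_{\II}$ --- legitimate because $\II$ is gauge-invariant --- produces a non-gauge-invariant ideal of $C^*(B,\beta)$, which closes the equivalence.
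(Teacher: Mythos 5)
Your proposal is correct and follows essentially the same route as the paper: pass to the gauge-invariant ideal $\II$ generated by $\iota(B)\cap\II'$, identify $\widehat{B/\I}$ with a closed $\widehat{\beta}$-invariant subset of $\widehat{B}$ so that freeness yields topological freeness of $\widehat{\beta_{\I}}$, and invoke Theorem \ref{topological freeness for reversible systems theorem} to see that the induced epimorphism onto $C^*(B,\beta)/\II'$ is injective; the commutative converse via the orbit representation of Example \ref{orbit representation} and the pullback along $q_{\II}$ is also exactly the paper's argument. Your extra care in exhibiting the quotient map explicitly as $\pi\rtimes U$ for a faithful covariant representation only makes explicit what the paper leaves implicit.
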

 \begin{proof}
 Let  $\II'$ be   an ideal in $C^*(B,\beta)$ and denote by $\II$ the ideal in $C^*(B,\beta)$ generated by $\iota(B)\cap \II'$. Recalling the last part of the proof of  Proposition  \ref{ideals in a reversible case},  we  have the epimorphism
$$
C^*(B/\I,\beta_{\I})\cong C^*(B,\beta)/\II \stackrel{\Phi}{\longmapsto}  C^*(B,\beta)/\II' 
$$
which is injective on the image of  $B/\I$. As $\I$ is invariant,  the spectrum of $B/\I$ is identified with the $\widehat{\beta}$-invariant closed set $\widehat{B}\setminus \widehat{\I}$ and  then $\widehat{\beta}_{\I}$ becomes a restriction of $\widehat{\beta}$. Thus if $\widehat{\beta}$ is free then $(\widehat{B/\I},\widehat{\beta_{\I}})$ is topologically free. Therefore,  by Theorem \ref{topological freeness for reversible systems theorem}, $\Phi$ is an isomorphism and consequently $\II'=\II$ is gauge-invariant. 
\\
Assume  that $B$ is commutative and  $\widehat{\beta}$ is not free. Then there is an invariant ideal $\I$ in $(B,\beta)$ such that the  partial homeomorphism  $\widehat{\beta_{\I}}$ is not topologically free.   By Example \ref{orbit representation} there is an ideal $\II'$ in $C^*(B/\I,\beta_{\I})$, which is not gauge-invariant. Denoting by $\II$ the (gauge-invariant) ideal in $C^*(B,\beta)$ generated by $\iota(\I)$ and identifying  $C^*(B/\I,\beta_{\I})$ with $ C^*(B,\beta)/\II$ we get an ideal  $q_{\II}^{-1}(\II')$  in $C^*(B,\beta)$ which fails to be gauge-invariant.
 \end{proof}

 \begin{cor}\label{cor for simplicity} 
  If $\beta$ is a minimal monomorphism with hereditary range  and $\widehat{\beta}$ is topologically free,  then $C^*(B,\beta)$ is simple.
 \end{cor}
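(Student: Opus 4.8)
The plan is to deduce the result from Corollary~\ref{freeness implies gauge-invariance} by upgrading topological freeness of $\widehat{\beta}$ to full \emph{freeness} in the sense of Definition~\ref{phi invariance}, and then reading off simplicity from minimality. First I would check that the present subsection's machinery is actually available: since $\beta$ is a monomorphism, its kernel $\{0\}$ is trivially a complemented ideal, and since its range is hereditary, Proposition~\ref{Proposition on transfers} guarantees that $(B,\beta)$ is a reversible $C^*$-dynamical system. In particular the dual partial homeomorphism $\widehat{\beta}$, Theorem~\ref{topological freeness for reversible systems theorem}, and Corollary~\ref{freeness implies gauge-invariance} all apply.

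The heart of the argument is to exploit minimality to rule out nontrivial invariant structure at the level of the spectrum. Recall, as in the proof of Corollary~\ref{freeness implies gauge-invariance}, that an invariant ideal $\I$ in $(B,\beta)$ is matched with the open $\widehat{\beta}$-invariant set $\widehat{\I}=\{\pi\in\widehat{B}:\pi(\I)\neq 0\}$ and with the complementary closed $\widehat{\beta}$-invariant set $\widehat{B}\setminus\widehat{\I}$; this correspondence between invariant ideals and closed $\widehat{\beta}$-invariant subsets is bijective. Since $\beta$ is minimal (Definition~\ref{minimality definitions}), the only invariant ideals in $(B,\beta)$ are $\{0\}$ and $B$, whence the only closed $\widehat{\beta}$-invariant subsets of $\widehat{B}$ are $\widehat{B}$ and $\emptyset$. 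Restricting $\widehat{\beta}$ to $\emptyset$ gives a (vacuously) topologically free partial homeomorphism, while restricting to $\widehat{B}$ returns $\widehat{\beta}$ itself, which is topologically free by hypothesis. Thus $\widehat{\beta}$ is free.

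Finally I would invoke Corollary~\ref{freeness implies gauge-invariance}: since $\widehat{\beta}$ is free, every ideal in $C^*(B,\beta)$ is gauge-invariant, and the lattice of gauge-invariant ideals is in order isomorphism with the lattice of invariant ideals in $(B,\beta)$. Minimality again forces the latter lattice to consist only of $\{0\}$ and $B$, so $C^*(B,\beta)$ admits no ideals other than $\{0\}$ and itself; that is, $C^*(B,\beta)$ is simple.

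I do not anticipate a real obstacle here: the entire content is the implication that minimality of $\beta$ promotes topological freeness to freeness, which reduces to the elementary observation that there are no intermediate $\widehat{\beta}$-invariant closed sets. The one point requiring care is the bijective correspondence between closed $\widehat{\beta}$-invariant subsets and invariant ideals, but this is precisely the identification already used just above. As an alternative that bypasses Corollary~\ref{freeness implies gauge-invariance}, one could argue directly: given a proper ideal $\II$, the set $\I=\{b\in B:\iota(b)\in\II\}$ is invariant by Lemma~\ref{helping lemma}, hence $\{0\}$ or $B$ by minimality; the case $\I=B$ would force $\II=C^*(B,\beta)$, so $\I=\{0\}$, in which case $(q_{\II}\circ\iota,\, q_{\II}(u))$ is a faithful covariant representation generating $C^*(B,\beta)/\II$, and Theorem~\ref{topological freeness for reversible systems theorem} identifies the quotient map with an isomorphism, forcing $\II=\{0\}$.
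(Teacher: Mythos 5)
Your proof is correct and follows exactly the route the paper intends: the paper states this corollary without a written proof, as an immediate consequence of Corollary \ref{freeness implies gauge-invariance}, and your argument---minimality forces the only closed $\widehat{\beta}$-invariant subsets of $\widehat{B}$ to be trivial, so topological freeness upgrades to freeness, whence all ideals of $C^*(B,\beta)$ are gauge-invariant and correspond to the (trivial) lattice of invariant ideals---is precisely that deduction, with the preliminary check via Proposition \ref{Proposition on transfers} that the hypotheses make $(B,\beta)$ reversible. Your alternative direct argument through Theorem \ref{topological freeness for reversible systems theorem} is also sound, but not needed.
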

\begin{rem}
 If $\beta$ is injective topological freeness of $\widehat{\beta}$ implies that no power $\alpha^n$, $n>0$, is inner. Hence  Corollary \ref{cor for simplicity} is consistent with  Theorem \ref{theorem on simplicity} ii),  see also  Remark \ref{remark on pederesen olesen}.
\end{rem}
We finish this subsection with an application to general $C^*$-dynamical systems.
 \begin{lem}\label{lemma Stanislawa Lema}
 Let $(A,\alpha)$ be a $C^*$-dynamical system, $J$  an ideal in $(\ker\alpha)^\bot$ and $(B,\beta)$ the natural reversible  $J$-extension of $(A,\alpha)$. Then 
 $\alpha$ is pointwise quasinilpotent if and only if  $\beta$ is pointwise quasinilpotent.
 \end{lem}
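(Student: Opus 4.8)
The plan is to prove both implications, the forward one being essentially trivial. Under the identification $A\subseteq B$ of Remark \ref{identifications remark} we have $\beta|_A=\alpha$ and $\alpha(A)\subseteq A$, so $\beta^n(a)=\alpha^n(a)$ for every $a\in A$ and $n\in\N$; hence if $\beta$ is pointwise quasinilpotent then $\lim_n\alpha^n(a)=\lim_n\beta^n(a)=0$ for all $a\in A$. All the work is in the converse, so assume $\alpha$ is pointwise quasinilpotent. By Remark \ref{identifications remark} we have $B=\sum_{n=0}^\infty\beta_*^n(A)$, so finite linear combinations of the elements $\beta_*^k(a)$ ($a\in A$, $k\in\N$) are dense in $B$. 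Since $\beta$ is a $*$-homomorphism it is contractive, and a routine $\varepsilon/2$-argument then reduces the claim to showing that $\lim_{m\to\infty}\beta^m(\beta_*^k(a))=0$ for each fixed $k\in\N$ and $a\in A$.

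To handle this I would pass to the crossed product. Identify $C^*(A,\alpha,J)=C^*(B,\beta)$ and let $(\iota,u)$ be the universal covariant representation, which is faithful on $B$ and satisfies $\iota(\beta(b))=u\iota(b)u^*$ and $\iota(\beta_*(b))=u^*\iota(b)u$ for $b\in B$ (Propositions \ref{on existence of crossed products} and \ref{proposition to be used}). Its restriction $(\iota|_A,u)$ is a $J$-covariant representation of $(A,\alpha)$, so $u\iota(a)u^*=\iota(\alpha(a))$ for $a\in A$; letting $a$ run through an approximate unit of $A$ and iterating gives $u^ku^{*k}=\overline{\iota}(\overline{\alpha}^k(1))$, whence $u^ku^{*k}\iota(a)u^ku^{*k}=\iota(\overline{\alpha}^k(1)\,a\,\overline{\alpha}^k(1))$. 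Consequently, for $a\in A$ and $m\ge k$,
\begin{align*}
\iota\big(\beta^m(\beta_*^k(a))\big) &= u^m\big(u^{*k}\iota(a)u^k\big)u^{*m} = u^{m-k}\big(u^ku^{*k}\,\iota(a)\,u^ku^{*k}\big)u^{*(m-k)}\\
&= u^{m-k}\,\iota\big(\overline{\alpha}^k(1)\,a\,\overline{\alpha}^k(1)\big)\,u^{*(m-k)} = \iota\big(\beta^{m-k}(c)\big),
\end{align*}
where $c:=\overline{\alpha}^k(1)\,a\,\overline{\alpha}^k(1)\in A$. Because $\beta|_A=\alpha$ and $\iota$ is faithful, this yields $\beta^m(\beta_*^k(a))=\alpha^{m-k}(c)$. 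Since $c\in A$ is fixed and $\alpha$ is pointwise quasinilpotent, $\alpha^{m-k}(c)\to 0$ as $m\to\infty$, so $\beta^m(\beta_*^k(a))\to 0$, which closes the reduction.

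The only delicate point is the identity $u^ku^{*k}=\overline{\iota}(\overline{\alpha}^k(1))$ and, through it, the fact that the compression $\beta^k\circ\beta_*^k$ carries $A$ back into $A$ via $a\mapsto\overline{\alpha}^k(1)\,a\,\overline{\alpha}^k(1)$. This is exactly the compatibility between the $(A,\alpha)$- and $(B,\beta)$-covariance of the single operator $u$; equivalently, in purely system-theoretic terms, it is the assertion that the multiplier $\overline{\beta}^k(1)\in M(B)$ acts on $A\subseteq B$ as $\overline{\alpha}^k(1)$, which one may alternatively verify directly from the explicit form of $\beta_*$ on the approximating algebras $B_n$ exhibited in the proof of Theorem \ref{granica prosta zepsute twierdzenie}. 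Everything else is formal manipulation of the relations $\iota(\beta(b))=u\iota(b)u^*$ and $\iota(\beta_*(b))=u^*\iota(b)u$, the telescoping $u^m=u^{m-k}u^k$ for $m\ge k$, and the density of $\sum_{n=0}^\infty\beta_*^n(A)$ in $B$.
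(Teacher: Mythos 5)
Your proof is correct, but it takes a mildly different route from the paper's: where you pass to the crossed product, the paper never leaves the direct-limit picture. The paper's proof simply recalls from the proof of Theorem \ref{granica prosta zepsute twierdzenie} the explicit action of $\beta$ on the dense union $\bigcup_{n\in\N}\phi_n(B_n)$ --- applying $\beta$ deletes the leading coordinate, so that for $m\ge n$ one has $\beta^m\bigl(\phi_n(q(a_0)\oplus\cdots\oplus q(a_{n-1})\oplus a_n)\bigr)=\phi_0(\alpha^{m-n}(a_n))$ --- and concludes at once by density and contractivity of $\beta$. Your argument reduces instead to the generators $\beta_*^k(a)$ and derives the same key identity, $\beta^m(\beta_*^k(a))=\alpha^{m-k}\bigl(\overline{\alpha}^k(1)\,a\,\overline{\alpha}^k(1)\bigr)$ for $m\ge k$, from the relations $\iota(\beta(b))=u\iota(b)u^*$, $\iota(\beta_*(b))=u^*\iota(b)u$ and $u^ku^{*k}=\overline{\iota}(\overline{\alpha}^k(1))$ inside $C^*(B,\beta)$; this is legitimate and non-circular, since Propositions \ref{on existence of crossed products} and \ref{proposition to be used} are established before and independently of this lemma. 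What your route buys is independence from the internal coordinates of the direct limit: you use only the abstract characterization of $(B,\beta)$ from Remark \ref{identifications remark} together with the universal covariant representation, so the argument survives any other realization of the natural reversible $J$-extension. What it costs is importing the representation theory of Section \ref{crossed products section} into a statement the paper keeps purely at the level of $C^*$-dynamical systems; the paper's few-line computation is accordingly shorter and self-contained. Your ``delicate point'' $u^ku^{*k}=\overline{\iota}(\overline{\alpha}^k(1))$ is indeed the crux, and both justifications you sketch for it work: non-degeneracy of $\iota|_A$ makes its strict extension together with $u$ a representation of $(M(A),\overline{\alpha})$, exactly as in the proof of Proposition \ref{iteration of representations}, while alternatively the direct-limit formulas show that $\overline{\beta}^k(1)\in M(B)$ acts on $A$ as $\overline{\alpha}^k(1)$.
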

 \begin{proof}
 With the notation from the proof of Theorem \ref{granica prosta zepsute twierdzenie},  $\beta$ is determined by the formulas $\beta(\phi_0(a_0))=\phi_0(\alpha(a_0))$ and  
 $$
 \beta(\phi_n(q(a_{0})\oplus ... \oplus q(a_{n-1}) \oplus a_{n}))=\phi_{n-1}(q(a_{1}))\oplus ... \oplus q(a_{n-1}) \oplus a_{n}),
 $$
 for $n>0$,  $a_k\in A_k$, $k=0,...,n$. Thus  $\beta$  is pointwise quasinilpotent on $\bigcup_{n\in \N} \phi_n(B_n)$ if and only if $\alpha$ is pointwise quasinilpotent on $A$. The assertion follows by the density of $\bigcup_{n\in \N} \phi_n(B_n)$ in $B$ .
\end{proof}

\begin{cor}\label{cor for freeness of quasinilpotents} 
 Let $(A,\alpha)$ be a $C^*$-dynamical system and $J$ an ideal in $(\ker\alpha)^\bot$.   If  $\alpha$ is pointwise quasinilpotent then all  ideals in $C^*(A,\alpha,J)$ are gauge-invariant. 
 \end{cor}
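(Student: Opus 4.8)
The plan is to transport the problem to the reversible extension and then reduce it, via the freeness criterion, to a purely dynamical statement about the dual partial homeomorphism. First I would invoke the identification $C^*(A,\alpha,J)\cong C^*(B,\beta)$ from Proposition \ref{on existence of crossed products} iii), where $(B,\beta)$ is the natural reversible $J$-extension of $(A,\alpha)$; since the two gauge actions are intertwined by this isomorphism, the property ``all ideals are gauge-invariant'' is equivalent for the two algebras. By Lemma \ref{lemma Stanislawa Lema} the hypothesis passes to the extension, so $\beta$ is pointwise quasinilpotent, and it suffices to prove that every ideal of $C^*(B,\beta)$ is gauge-invariant.

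By Corollary \ref{freeness implies gauge-invariance} it is enough to show that $\widehat{\beta}$ is free, i.e.\ topologically free on every closed invariant subset of $\widehat{B}$. Since a periodic point of a restriction $\widehat{\beta}|_{Y}$ is automatically a periodic point of $\widehat{\beta}$, it suffices to prove the stronger statement that $\widehat{\beta}$ \emph{has no periodic points at all}; this yields topological freeness on every closed invariant set at once, hence freeness.

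The crux is therefore the claim that a pointwise quasinilpotent reversible $\beta$ has a periodic-point-free dual, which I would prove by contradiction. Suppose $\pi\in\widehat{B}$ has period $n$, so $\widehat{\beta}^{kn}(\pi)\cong\pi$ for every $k\ge 1$. From the definition of the dual map one has $\widehat{\beta}^{kn}(\pi)(c)=\pi(\beta^{kn}(c))$ for $c\in\beta_*^{kn}(B)$, and since unitarily equivalent representations agree in norm on every element, this gives the key identity $\|\pi(\beta^{kn}(c))\|=\|\pi(c)\|$ for all $c\in\beta_*^{kn}(B)$. Moreover $\widehat{\beta}^{kn}(\pi)\cong\pi$ lies in $\widehat{\beta_*^{kn}(B)}$, so $\pi(\beta_*^{kn}(B))\neq 0$; and because $\beta_*^{kn}(B)=\overline{\beta}_*^{kn}(1)B$ is a direct summand cut out by a \emph{central} projection of $M(B)$ (Proposition \ref{lemma for extensions of transfers}), irreducibility of $\pi$ forces $\overline{\pi}(\overline{\beta}_*^{kn}(1))=1$.

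Now I would fix any $b_0\in B$ with $\pi(b_0)\neq 0$ and set $c_k:=\overline{\beta}_*^{kn}(1)b_0=\beta_*^{kn}(\beta^{kn}(b_0))\in\beta_*^{kn}(B)$. The generalized-inverse relation $\beta^{kn}\circ\beta_*^{kn}\circ\beta^{kn}=\beta^{kn}$ yields $\beta^{kn}(c_k)=\beta^{kn}(b_0)$, while $\pi(c_k)=\overline{\pi}(\overline{\beta}_*^{kn}(1))\pi(b_0)=\pi(b_0)$. Combining these with the key identity gives $\|\beta^{kn}(b_0)\|\ge\|\pi(\beta^{kn}(c_k))\|=\|\pi(c_k)\|=\|\pi(b_0)\|>0$ for every $k$, contradicting $\lim_m\beta^m(b_0)=0$. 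I expect the only delicate point to be exactly this last paragraph, namely manufacturing a \emph{single} fixed element $b_0$ whose iterates fail to decay: the naive candidate is forced to live in the shrinking ideals $\beta_*^{kn}(B)$, and it is precisely the centrality of $\overline{\beta}_*^{kn}(1)$ together with the identities $\beta\circ\beta_*\circ\beta=\beta$ and $\beta_*\circ\beta=\overline{\beta}_*(1)\,\cdot$ that let one correct this and keep $b_0$ independent of $k$.
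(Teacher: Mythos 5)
Your proof is correct and takes essentially the same route as the paper: both reduce, via Proposition \ref{on existence of crossed products} iii), Lemma \ref{lemma Stanislawa Lema} and Corollary \ref{freeness implies gauge-invariance}, to showing that pointwise quasinilpotence of a reversible $\beta$ rules out periodic points of $\widehat{\beta}$, and both derive the contradiction from the norm preservation $\|\pi(\beta^{nk}(c))\|=\|\pi(c)\|\neq 0$ along a periodic orbit. The only difference is organizational: the paper argues contrapositively through a quotient subsystem $(B/\I,\beta_{\I})$ on which topological freeness fails, whereas you rule out periodic points of $\widehat{\beta}$ itself (observing that periodic points of restrictions lift to periodic points of $\widehat{\beta}$), and your manufacture of a $k$-independent element $b_0$ via centrality of $\overline{\beta}_*^{kn}(1)$ is precisely the step the paper compresses into ``this implies that there exists $b\in B/\I$\,\ldots''.
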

\begin{proof}
In view of Corollary \ref{freeness implies gauge-invariance} and Lemma \ref{lemma Stanislawa Lema} it suffices to note that if  a reversible $C^*$-dynamical system $(B,\beta)$ is such that  $\beta$ is pointwise quasinilpotent, then $\widehat{\beta}$ is  free. But if $\widehat{\beta}$ is not free then $\widehat{\beta_{\I}}$, for a certain subsystem $(B/\I, \beta_{\I})$ of $(B,\beta)$, is not topologically free. This implies that there exists $b \in B/\I$, a representation $\pi\in\widehat{B/\I}$ and a number $n$ such that $0\neq \|\pi(b)\|= \|\pi(\beta_{\I}^{nk}(b))\|$ for all $k\in \N$. Hence $\beta$ can not be pointwise quasinilpotent.
\end{proof} 
\subsection{Commutative $C^*$-dynamical systems}\label{Commutative $C^*$-dynamical systems} We fix a  locally Hausdorff space $X$ and a $C^*$-dynamical system $(A,\alpha)$ where $A=C_0(X)$ is the algebra of continuous functions on $X$ that vanish at infinity. We also fix an ideal $J\subseteq (\ker\alpha)^\bot$ and denote by $(B,\beta)$ the natural reversible $J$-extension of $(A,\alpha)$.
Clearly, $B$ is also commutative. We will  identify $B$  with $C_0(\X)$ for some locally compact Hausdorff space $\X$. 
Then, for $a\in A$ and $b\in B$, we have
$$
\alpha(a)(x)=
\begin{cases}
a(\varphi(x)), &x\in \Delta ,
\\
0, &x\notin \Delta , 
\end{cases}, \qquad  \beta(b)(x)=
\begin{cases}
b(\tphi(\x)), &\x\in \TDelta ,
\\
0, &\x\notin \TDelta, 
\end{cases}
$$
where  $\varphi:\Delta\to X$ and $\tphi: \TDelta\to \X$ are proper continuous mappings defined respectively on clopen subsets $\Delta\subseteq X$ and $\TDelta\subseteq \X$ (they  are closed because $\alpha$ and $\beta$ are extendible). One deduces from  Propositions \ref{Proposition on transfers}, \ref{lemma for extensions of transfers}  that   $\tphi$ is a homeomorphism onto a clopen set $\tphi(\TDelta)\subseteq \X$ and $\beta_*$ is actually an extendible endomorphism given by the similar formula as $\beta$ but with  $\tphi$ replaced with $\tphi^{-1}$.  

We dualize our construction of $(B,\beta)$ to obtain a description of $(\X,\tphi)$ in  terms of $(X,\varphi)$. This end let $Y \subseteq X$ be  the hull of the ideal $J$, that is  $Y$ is the closed set  such that 
$$
J=C_0(X\setminus Y).
$$
Note that the relation $J\subseteq (\ker\alpha)^\bot$ is equivalent to the equality  $ Y\cup \varphi(\Delta)=X$.
In the unital case the following description, using a different approach, was obtained in  \cite[Theorem 3.5]{kwa-logist} and under an additional assumption in \cite[Theorem 3.5]{maxid}.

\begin{prop}\label{reversible extension on topological level} Up to a homeomorphism   $\X$ is the following  subspace  of the product space $\prod_{n\in \N} (X\cup \{0\})$, where $\{0\}$ is an abstract clopen singleton:
$$
\X=\bigcup_{N=0}^{\infty} X_N\cup X_\infty
$$
where
$$
X_N=\{(x_0,x_1,...,x_N,0,...): x_n\in \Delta,\, \varphi(x_{n})=x_{n-1},\,\, n=1,...,N,\,x_N\in
Y\},
$$
and 
$$
X_\infty=\{(x_0,x_1,...): x_n\in \Delta,\,
\varphi(x_{n})=x_{n-1},\, \,n\geqslant 1\}.
$$
Then $
\TDelta=\{(x_0,x_1,...)\in \X: x_0 \in \Delta\}$, $\tphi(\TDelta)=\{(x_0,x_1,...)\in \X: x_1 \neq 0\}
$,
and  
$$
\tphi(x_0,x_1,...)=(\varphi(x_0),x_0,x_1,...),\qquad \tphi^{-1}(x_0,x_1,...)=(x_1,...).
$$
\end{prop}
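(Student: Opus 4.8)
The plan is to dualize, via Gelfand duality, the direct-limit construction of $(B,\beta)$ from Theorem \ref{granica prosta zepsute twierdzenie}: since all the algebras involved are commutative, the direct limit $B=\varinjlim\{B_n,T_n\}$ becomes an inverse limit of spectra $\widehat B=\varprojlim\{\widehat{B_n},\widehat{T_n}\}$. First I would make the building blocks explicit. As $\overline\alpha(1)$ is the strict limit of $\alpha(\mu_\lambda)$, in the commutative case it is the characteristic function $\chi_\Delta$, and iterating gives $\overline\alpha^n(1)=\chi_{\Delta_n}$, where $\Delta_n=\{x: x,\varphi(x),\dots,\varphi^{n-1}(x)\in\Delta\}$ is the clopen domain of $\varphi^n$. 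Hence $A_n=\overline\alpha^n(1)A\overline\alpha^n(1)=C_0(\Delta_n)$. Because $J=C_0(X\setminus Y)$ with $Y$ closed, the quotient $A/J$ is $C_0(Y)$ under restriction, so $q(A_k)=C_0(\Delta_k\cap Y)$. Consequently the spectrum of $B_n=\bigoplus_{k=0}^{n-1}q(A_k)\oplus A_n$ is the disjoint union $\widehat{B_n}=\bigsqcup_{k=0}^{n-1}(\Delta_k\cap Y)\sqcup\Delta_n$, the last (``free'') summand carrying no $Y$-constraint.

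Next I would compute the dual bonding maps. From the explicit formula for $T_n$ one reads off that $\widehat{T_n}\colon\widehat{B_{n+1}}\to\widehat{B_n}$ acts as the identity on each $\Delta_k\cap Y$ for $k\le n-1$, as the inclusion $\Delta_n\cap Y\hookrightarrow\Delta_n$ on the $k=n$ quotient summand, and as $\varphi\colon\Delta_{n+1}\to\Delta_n$ on the free summand. A point of $\widehat B$ is then a compatible sequence $(\xi_n)$ with $\widehat{T_n}(\xi_{n+1})=\xi_n$. The key structural observation is a monotonicity of the free/quotient dichotomy: if $\xi_n$ lies in the free summand $\Delta_n$ then $\xi_m=\varphi^{\,n-m}(\xi_n)$ lies in $\Delta_m$ for every $m\le n$, so once a coordinate enters a quotient summand it remains in one. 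This splits compatible sequences into those that are free for all $n$, and those for which there is a least index $N$ with $\xi_N$ in a quotient summand; in the latter case compatibility forces $\xi_N\in\Delta_{N-1}\cap Y$ and $\xi_m=\xi_N$ for all $m\ge N$, with $\xi_{N-1}=\xi_N$ as points of $\Delta_{N-1}$.

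I would then read off the product coordinates. The canonical image of $a\in A=B_0$ inside $B_n$ is $(q(a),q(\alpha(a)),\dots,q(\alpha^{n-1}(a)),\alpha^n(a))$, and the formula for $\beta_*$ from the proof of Theorem \ref{granica prosta zepsute twierdzenie} shows that $\beta_*^k(\iota(a))$ occupies position $k$ after $k$ leading zeros, its component at position $j\ge k$ being $q(\alpha^{j-k}(a))$ (or $\alpha^{n-k}(a)$ in the free slot $j=n$). Evaluating at $\xi_n$ for large $n$ yields the character $a\mapsto\beta_*^k(\iota(a))(\xi)$, which I declare to be the $k$-th coordinate $x_k\in X\cup\{0\}$. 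A direct computation then gives: in the free-forever case $x_k=\xi_k\in\Delta_k$ with $\varphi(x_k)=x_{k-1}$, matching $X_\infty$; in the case with least quotient index $N$, setting $p=\xi_{N-1}\in\Delta_{N-1}\cap Y$ gives $x_k=\varphi^{\,N-1-k}(p)$ for $k\le N-1$ and $x_k=0$ for $k\ge N$, i.e.\ exactly a point of $X_{N-1}$. Letting $N$ range over $\{1,2,\dots\}$ together with the infinite case produces $\X=\bigcup_{N\ge0}X_N\cup X_\infty$; continuity of the coordinate maps identifies this bijection with the inverse-limit topology, hence it is a homeomorphism.

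Finally, for the dynamics I would use the identity $\beta_*^k(\iota(a))(\x)=a(x_k)$ together with $\beta(\iota(a))=\iota(\alpha(a))$ and the relations $\beta(b)(\x)=b(\tphi(\x))$, $\beta_*(b)(\x)=b(\tphi^{-1}(\x))$ coming from Propositions \ref{Proposition on transfers} and \ref{lemma for extensions of transfers}. Reading the zeroth coordinate of $\beta(\iota(a))$ shows $\TDelta=\{\x: x_0\in\Delta\}$ and $(\tphi(\x))_0=\varphi(x_0)$, while $\beta_*^k(\iota(a))(\x)=a(x_k)$ forces $\tphi^{-1}(x_0,x_1,\dots)=(x_1,x_2,\dots)$, whence $\tphi(x_0,x_1,\dots)=(\varphi(x_0),x_0,x_1,\dots)$ and $\tphi(\TDelta)=\{\x: x_1\neq0\}$. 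The step I expect to be the main obstacle is the inverse-limit bookkeeping: establishing the monotonicity of the free/quotient dichotomy and verifying, without index errors, that the two families of compatible sequences correspond precisely to $X_N$ and $X_\infty$, and that the inverse-limit and subspace topologies agree.
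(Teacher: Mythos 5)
Your proposal is correct and follows essentially the same route as the paper: Gelfand-dualizing the direct sequence \eqref{ciag prosty C*-algebr} into an inverse sequence of disjoint unions $\widehat{B}_n=\bigsqcup_{k=0}^{n-1}(\Delta_k\cap Y)\sqcup\Delta_n$ with identity/inclusion/$\varphi$ bonding maps, and then classifying compatible threads according to whether they stay in the ``free'' summands forever (giving $X_\infty$) or slide into a quotient summand at some stage and stabilize (giving the $X_N$). The paper declares these verifications, together with the topology matching and the formulas for $\TDelta$ and $\tphi$, to be ``tedious details left to the reader,'' whereas you supply them explicitly (notably the evaluation identity $\beta_*^k(\iota(a))(\x)=a(x_k)$), so your write-up is a filled-in version of the paper's argument rather than a different one.
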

\begin{proof}
Let $\Delta_n:=\varphi^{-n}(\Delta)$ be the  natural domain of the partial mapping $\varphi^n$. Dualizing  diagram \eqref{direct limit diagram to be dualized} we see that  direct sequence \eqref{ciag prosty C*-algebr}  dualizes to the following inverse sequences
$$
\begin{xy}
\xymatrix@C=2pt@R=24pt{
 \widehat{B}_{0}   & \qquad =\qquad &     X
   \\
 \widehat{B}_{1} \ar[u]_{\widehat{T}_1} & \qquad =\qquad &    Y \ar[u]_{id}& \sqcup & \Delta_1  \ar[ull]_{\varphi}
      \\
  \widehat{B}_{2}  \ar[u]_{\widehat{T}_2} & \qquad =\qquad  &      Y \ar[u]_{id}& \sqcup &  Y\cap \Delta_1 \ar[u]_{id} & \sqcup & \Delta_2 \ar[ull]_{\varphi}
      \\
   \ar@{.}[u]_{\widehat{T}_3}  &  &     \ar@{.}[u]_{id}& &  \ar@{.}[u]_{id}& & \ar@{.}[u]_{id}&  & \qquad  \ar@{.}[ull]_{\varphi}
        }
  \end{xy}
$$
where  $\sqcup$ denotes the direct sum of topological spaces.
Now it is easy to  see that the inverse limit $\underleftarrow{\,\,\lim\,}\{\widehat{B_n},\widehat{T}_n\}$ can be identified with the space $\X$ described in the assertion. For instance,  if  $\x=(x_0,x_1,x_2,...)$ is an element of the inverse limit, then we have two alternatives: either every coordinate of $\x$ `lies on the diagonal' of the above diagram, and then   $\x$  identifies as the element of $X_\infty$;  
or  only $N$ first coordinates of $\x$ `lie on the diagonal' and next ones
`slide off the diagonal' fixing at $x_N$ - then we identify  $\x$  with the element of $X_N$ by replacing  the tail of  $x_N$'s  with tail of zeros.
We leave the remaining tedious details to the reader.\end{proof}

We extend Definition \ref{topolo disco} to not (necessarily) injective maps.
  \begin{defn}\label{topological freeness for maps}
We say that a periodic orbit $\OO=\{x, \varphi(x),..., \varphi^{n}(x)\}$ of a periodic point $x$    \emph{has an entrance} $y\in \Delta$  if $y\notin \OO$ and $\varphi(y)\in \OO$.  The partial mapping $\varphi$ is \emph{topologically free} if the set of all periodic points whose orbits have no entrances
 has empty interior. 
\end{defn} 
We will also need  a relative version of topological freeness which coincides with the unrelative one  when  $Y=\overline{X\setminus\varphi(\Delta)}$ is the smallest set under consideration (recall that we must have $Y\cup \varphi(\Delta)=X$).
\begin{defn}
The partial mapping $\varphi$ is said to be \emph{topologically free outside $Y$} if the set of periodic points  whose orbits do not intersect $Y$ and have no entrances have empty interior. 
\end{defn}
\begin{lem}\label{equivalence of topological freedom}
Let $(X,\varphi)$ and $(\X,\tphi)$ be as in Proposition \ref{reversible extension on topological level}. The partial homeomorphism $\tphi$ is topologically free  if and only if   $\varphi$ is topologically free outside $Y$.
\end{lem}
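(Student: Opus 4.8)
The plan is to read off the periodic points of $\tphi$ from those of $\varphi$ through the inverse-limit picture of Proposition \ref{reversible extension on topological level}, and then compare interiors. Let $\pi_0\colon\X\to X\cup\{0\}$ be the first-coordinate projection $\x\mapsto x_0$; it is continuous and satisfies $\pi_0\circ\tphi=\varphi\circ\pi_0$ on $\TDelta$. A direct computation with $\tphi(x_0,x_1,\dots)=(\varphi(x_0),x_0,x_1,\dots)$ shows that $\tphi^{n}(\x)=\x$ forces $\x\in X_\infty$ with an $n$-periodic sequence; then $x_0$ is $\varphi$-periodic of period dividing $n$, the set $\OO=\{x_0,\dots,x_{n-1}\}$ is a $\varphi$-orbit, and the coordinates obey the forward formula $x_j=\varphi^{n-j}(x_0)$. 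Conversely, a $\varphi$-periodic point whose orbit $\OO$ has no entrance and misses $Y$ admits a \emph{unique} backward orbit: missing $Y$ forbids any coordinate from becoming $0$, and the no-entrance hypothesis makes each $x_j$ the unique $\varphi$-preimage of $x_{j-1}$, so the only backward orbit is the periodic one. I abbreviate $P_n:=\{x\in\varphi^{-n}(\Delta):\varphi^{n}(x)=x\}$, which is closed in $X$, and let $P$ be the set of $\varphi$-periodic points with no entrance whose orbits miss $Y$; the claim is that all sets $\{\x:\tphi^{n}(\x)=\x\}$ have empty interior in $\X$ exactly when $P$ has empty interior in $X$ (the distinction between exact period and period dividing $n$ is routine and I ignore it).

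The pivotal observation, used in both directions, is that an open set of $\tphi$-periodic points meets only \emph{good} orbits. Suppose $\U=\{\x\in\X:x_j\in V_j,\ 0\le j\le N\}$ is a nonempty basic open set with $\U\subseteq\{\x:\tphi^{n}(\x)=\x\}$. If some $\x\in\U$ had an orbit with an entrance $y\in\Delta$, $y\notin\OO$, $\varphi(y)=x_i$, then, as $x_i$ recurs at infinitely many coordinates, I would pick an index $j>N$ with $x_j=x_i$, set $x'_{j+1}:=y$ and complete a backward orbit from $y$; the resulting $\x'$ agrees with $\x$ up to index $N$, hence lies in $\U$, yet is not $n$-periodic, since periodicity would force $x'_{j+1}=x_{j+1}\in\OO$. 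The same trick with $x'_{j+1}:=0$ at a coordinate $j>N$ carrying a point of $\OO\cap Y$ produces a point of some $X_j\not\subseteq X_\infty$, again in $\U$ but non-periodic. Both contradict $\U\subseteq\{\x:\tphi^{n}(\x)=\x\}$, so every orbit met by $\U$ lies in $P$.

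For the implication that topological freeness of $\tphi$ yields topological freeness of $\varphi$ outside $Y$, I argue contrapositively from such a $\U$. Using the forward formula I put $\tilde V:=\bigcap_{j=0}^{N}(\varphi^{N-j})^{-1}(V_j)$, an open subset of $X$, nonempty because $x_N\in\tilde V$ for any $\x\in\U$. For $x\in\tilde V$ I choose any backward orbit $\x'$ through $x$; its first $N+1$ coordinates are forced to be $x'_j=\varphi^{N-j}(x)\in V_j$, so $\x'\in\U$, and by the previous paragraph $\x'$ is $n$-periodic with a good orbit, whence $x=x'_N\in P$. Thus $\tilde V\subseteq P$ is a nonempty open set and $\varphi$ is not topologically free outside $Y$.

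For the reverse implication I again argue contrapositively. If $P$ has nonempty interior, fix a nonempty open $V\subseteq P$. Writing $V=\bigcup_{n\ge1}\big(V\cap P_n\big)$ with each $V\cap P_n$ closed in $V$ and $V$ a Baire space (as $X$ is locally compact Hausdorff), Baire's theorem yields, for some fixed $n$, a nonempty open $V'\subseteq V\cap P_n\subseteq P$. Every $x_0\in V'$ then has a good orbit of period dividing $n$, hence a unique backward orbit, which is $n$-periodic; consequently $\pi_0^{-1}(V')=\{\x\in\X:x_0\in V'\}$ is a nonempty open subset of $\{\x:\tphi^{n}(\x)=\x\}$, so $\tphi$ is not topologically free. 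The main subtlety throughout is transferring openness between $X$ and the inverse limit $\X$: in one direction it is handled by the forward parametrization $x_j=\varphi^{N-j}(x_N)$, which converts the backward constraints defining $\U$ into genuinely open conditions on $X$; in the other by the uniqueness of good backward orbits, together with the Baire step needed to pin down a single period.
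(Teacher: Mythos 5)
Your proof is correct and follows essentially the same route as the paper's: both reduce, period by period, to comparing the fixed-point set $\F_n=\{\x:\tphi^n(\x)=\x\}$ with the set $F_n$ of $\varphi^n$-fixed points whose orbits miss $Y$ and have no entrance, transferring open sets one way via the first-coordinate projection (using uniqueness of backward orbits through good points) and the other way via the forward parametrization $x_j=\varphi^{N-j}(x_N)$. The only difference is one of detail: you spell out what the paper compresses into single sentences, namely the entrance-insertion/truncation perturbation hiding behind ``the inclusion $\U\subseteq\F_n$ forces $U\subseteq F_n$'', and the Baire step, which you correctly run with the closed sets $P_n$ intersected with the open set $V\subseteq P$ (rather than with the sets $F_n$ themselves, which need not be closed).
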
  
\begin{proof}
Let $n>0$. By Baire theorem it suffices to show that the set  $\F_n:=\{\x: \tphi^n(\x)=\x\}$ has empty interior in $\X$ if and only if the set
$$
F_n:=\{x: \varphi^n(x)=x \textrm{ and }\varphi^{-1}(\varphi^k(x))=\{\varphi^{k-1}(x)\}\subseteq X\setminus Y,\textrm{ for all }k = 1, . . . , n\}
$$ 
has  empty interior in $X$. But if $U$ is a non-empty open subset of $F_n$ then our description of $(\X,\tphi)$ implies that 
$
\U:=\{x\in \X: x_0 \in U\} 
$ is non-empty open subset of $\F_n$. Conversely,  if $\F_n$ has a non-empty interior then $\F_n$ contains a non-empty set of the form  $\U:=\{x\in \X: x_n \in U\}$ for an open set $U$ in $X$. The inclusion $\U\subseteq \F_n$ forces $U$ to be contained in $F_n$.
\end{proof}

Combining the above lemma and Theorem \ref{topological freeness for reversible systems theorem}, modulo the last part of Proposition \ref{representations vs reversible extensions} and Example \ref{orbit representation}, we get 
 \begin{prop}
The following conditions are equivalent:
\begin{itemize}
\item[i)] the partial map $\varphi$ is topologically free outside  $Y$ (the hull of the ideal $J$),
\item[ii)] any faithful representation $(\pi,U)$ of $(A,\alpha)$ such that $
J=\{a\in A: U^*U\pi(a)=\pi(a)\}
$ 
give rises to a faithful representation $C^*(A,\alpha,J)$
\end{itemize}
\end{prop}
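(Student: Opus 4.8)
The plan is to reduce the equivalence for the general system $(A,\alpha)$ to the corresponding statement for its natural reversible $J$-extension $(B,\beta)$, where the machinery of Theorem~\ref{topological freeness for reversible systems theorem} and Example~\ref{orbit representation} is already available. First I would invoke the last part of Proposition~\ref{representations vs reversible extensions}: a faithful representation $(\pi,U)$ of $(A,\alpha)$ with $J=\{a\in A: U^*U\pi(a)=\pi(a)\}$ corresponds bijectively to a \emph{faithful} covariant representation $(\widetilde{\pi},U)$ of $(B,\beta)$, and under the identification $C^*(A,\alpha,J)\cong C^*(B,\beta)$ from Proposition~\ref{on existence of crossed products}~iii) the map $\pi\rtimes U$ is an isomorphism if and only if $\widetilde{\pi}\rtimes U$ is. Thus condition ii) is equivalent to the statement that \emph{every} faithful covariant representation of $(B,\beta)$ integrates to an isomorphism of $C^*(B,\beta)$.

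Next I would translate the dynamical side. By Lemma~\ref{equivalence of topological freedom}, the partial homeomorphism $\tphi$ dual to $\beta$ is topologically free if and only if $\varphi$ is topologically free outside $Y$, which is exactly condition i) (recall $Y$ is the hull of $J$). So the whole proposition reduces to the equivalence, for the commutative reversible system $(B,\beta)$, between topological freeness of $\widehat{\beta}=\tphi$ and the isomorphism property for all faithful covariant representations. The forward implication, topological freeness $\Rightarrow$ isomorphism, is precisely Theorem~\ref{topological freeness for reversible systems theorem}, which holds for any reversible system and a fortiori here.

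For the reverse implication I would argue by contraposition, exactly as in Example~\ref{orbit representation}: if $\tphi$ is \emph{not} topologically free, then there is a nonempty open set $U\subseteq\X$ consisting of periodic points of some common period $n>0$. Using the orbit representation $(\pi,U)$ built in Example~\ref{orbit representation}, which is a faithful covariant representation of $(B,\beta)$, one picks a nonzero $b\in\bigcap_{\x\in\widehat{B}\setminus U}\ker\x$ and observes $0\neq b-bu^{n}\in\ker(\pi\rtimes U)$, so $\pi\rtimes U$ fails to be injective. Pulling this faithful covariant representation of $(B,\beta)$ back through Proposition~\ref{representations vs reversible extensions} yields a faithful representation $(\pi_0,U)$ of $(A,\alpha)$ with $J=\{a\in A: U^*U\pi_0(a)=\pi_0(a)\}$ for which $\pi_0\rtimes U$ is not faithful, contradicting ii). Hence ii) forces topological freeness of $\tphi$, i.e. condition i).

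The main obstacle, as flagged by the phrase ``modulo the last part of Proposition~\ref{representations vs reversible extensions} and Example~\ref{orbit representation}'' in the excerpt, is bookkeeping the correspondence of representations carefully: one must check that the orbit representation of $(B,\beta)$, when restricted and transported back to $(A,\alpha)$, remains faithful and continues to satisfy the sharp equality $J=\{a\in A: U^*U\pi(a)=\pi(a)\}$ rather than merely the covariance inclusion. This is where Proposition~\ref{representations vs reversible extensions} must be applied in its full strength—its final sentence characterizes faithfulness of $\widetilde{\pi}$ precisely by faithfulness of $\pi$ together with that equality—so the equivalence is exactly the one needed to keep both directions clean. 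Beyond this, everything is a routine assembly of the cited results, with no new estimates required.
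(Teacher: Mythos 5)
Your proposal is correct and takes essentially the same approach as the paper: the paper obtains the proposition precisely by combining Lemma \ref{equivalence of topological freedom}, Theorem \ref{topological freeness for reversible systems theorem}, the last part of Proposition \ref{representations vs reversible extensions}, and Example \ref{orbit representation}, which is exactly the assembly you carry out (forward direction via topological freeness of $\tphi$, reverse direction by contraposition through the orbit representation). Your careful bookkeeping of the faithfulness-plus-sharp-equality correspondence is just an explicit unwinding of the ``modulo'' step the paper leaves implicit.
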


\begin{rem} If $Y=X$ then  any $\varphi$ is (trivially) topologically free outside $Y$. Hence regardless of $(A,\alpha)$ any faithful  representation $(\pi,U)$ of $(A,\alpha)$ such that $\{a\in A: U^*U\pi(a)=\pi(a)\}=\{0\}$, integrates to the isomorphism from $C^*(A,\alpha,\{0\})$ onto $C^*(\pi,U)$. This is a resemblance of much more general facts whose prototype is Coburn's uniqueness theorem.
\end{rem}
We define objects dual to $J$-pairs and $J$-invariant ideals as follows
\begin{defn}\label{invariants definition} A set   $V\subseteq X$ is \emph{positively invariant} under $\varphi$ if  $\varphi(V\cap \Delta)\subseteq V$, and $V$ is \emph{$Y$-negatively invariant} if $V\subseteq Y\cup\varphi(V\cap \Delta)$. If $V$ is both positively and $Y$-negatively invariant, we  call  it   \emph{$Y$-invariant}.
 A pair $(V,V')$ of closed subsets of $X$ such that 
$$
V \textrm{ is positively } \varphi\textrm{-invariant,}\quad  V'\subseteq Y \quad \textrm{and}\quad  V'\cup \varphi(V\cap \Delta)=V
$$  
will be called a $Y$-pair for $(X,\varphi)$. We equip  the set $Y$-pairs  with natural partial order: $(V_1, V_1') \subseteq (V_2,V_2')$ $\stackrel{def}\Longleftrightarrow$  $V_1\subseteq V_2$ and $V_1'\subseteq V_2'$.
\end{defn}

\begin{prop}\label{proposition on ideals in commutative}
All ideals in $C^*(A,\alpha,J)$ are gauge-invariant if and only if $\varphi$ has no periodic points. In general, the relations 
$$
C_0(X\setminus V)=\{a \in A: \iota(a) \in \II\}, \qquad C_0(X\setminus V')=\{a\in A: (1-u^*u) \iota(a) \in \II\},
$$
establish an anti-isomorphism between the lattices of gauge-invariant ideals  $\II$ in $C^*(A,\alpha,J)$ and $Y$ pairs $(V,V')$ for $(X,\varphi)$; it restricts to the anti-isomorphism between the lattices of all ideals $\II$ in $C^*(A,\alpha,J)$ generated by their intersection with $\iota(A)$ and $Y$-invariant closed sets $V$ for $(X,\varphi)$.
 \end{prop}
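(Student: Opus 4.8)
The plan is to deduce both assertions from the results already established for general $(A,\alpha)$ by passing through Gelfand duality. For the second (structural) assertion I would start from Theorem \ref{lattices descriptions main thm}, which gives an order isomorphism between gauge-invariant ideals $\II$ of $C^*(A,\alpha,J)$ and $J$-pairs $(I,I')$ for $(A,\alpha)$. Since $A=C_0(X)$, the assignment of an ideal to its hull is a lattice anti-isomorphism onto the closed subsets of $X$, taking sums of ideals to intersections of hulls and intersections to unions. Writing $I=C_0(X\setminus V)$ and $I'=C_0(X\setminus V')$, I would translate the three defining conditions of a $J$-pair into those of a $Y$-pair from Definition \ref{invariants definition}: the positive invariance $\alpha(I)\subseteq I$ dualises to $\varphi(V\cap\Delta)\subseteq V$; because $J=C_0(X\setminus Y)$, the inclusion $J\subseteq I'$ dualises to $V'\subseteq Y$; and, computing the hull of $\alpha^{-1}(I)$ to be $\varphi(V\cap\Delta)$, the relation $I'\cap\alpha^{-1}(I)=I$ dualises to $V'\cup\varphi(V\cap\Delta)=V$. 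As the hull map reverses order, composing it with the isomorphism of Theorem \ref{lattices descriptions main thm} yields the claimed lattice anti-isomorphism, and the relations \eqref{J-pair from gauge-invariant ideal} read off verbatim as the displayed formulas for $V$ and $V'$.

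The only nonformal step here is the identification of the hull of $\alpha^{-1}(I)$. From $\alpha(a)(x)=a(\varphi(x))$ for $x\in\Delta$ (and $0$ otherwise) one sees that $a\in\alpha^{-1}(I)$ exactly when $a$ vanishes on $\varphi(V\cap\Delta)$, so $\alpha^{-1}(I)=C_0(X\setminus\varphi(V\cap\Delta))$ provided $\varphi(V\cap\Delta)$ is closed. This is where properness of $\varphi$ is used: $\Delta$ is clopen, hence $V\cap\Delta$ is closed, and $\varphi$, being proper and continuous, is a closed map. For the restricted correspondence I would invoke the second part of Theorem \ref{lattices descriptions main thm}, according to which ideals $\II$ generated by their intersection with $\iota(A)$ correspond to $J$-invariant ideals $I$; dualising the remaining condition $J\cap\alpha^{-1}(I)\subseteq I$ gives $V\subseteq Y\cup\varphi(V\cap\Delta)$, i.e. the $Y$-negative invariance of Definition \ref{invariants definition}, so $J$-invariant ideals match $Y$-invariant closed sets.

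For the first assertion I would use the commutative half of Corollary \ref{freeness implies gauge-invariance}: all ideals of $C^*(A,\alpha,J)=C^*(B,\beta)$ are gauge-invariant precisely when $\widehat{\beta}=\tphi$ is free. It then remains to prove that $\tphi$ is free if and only if $\varphi$ has no periodic points. If $\varphi$ has no periodic point, then neither does $\tphi$ (a $\tphi$-periodic point forces a $\varphi$-periodic first coordinate, see below), and absence of periodic points is inherited by every restriction, so $\tphi$ is trivially free. Conversely, a $\tphi$-periodic point $\x$ of minimal period $n$ has a finite, hence closed, $\tphi$-invariant orbit on which every point is periodic of period $n$; topological freeness therefore fails on this closed invariant set, so $\tphi$ is not free. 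To match periodic points in both directions I would read off from Proposition \ref{reversible extension on topological level} that a $\tphi$-periodic point must lie in $X_\infty$ (the points of the $X_N$ are eventually shifted out of the domain of $\tphi^{-1}$) and there $\tphi^n(\x)=\x$ forces $\varphi^n(x_0)=x_0$, while a $\varphi$-periodic $x$ (automatically with all forward iterates in $\Delta$) lifts to the periodic backward orbit $(x,\varphi^{n-1}(x),\dots)\in X_\infty$.

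I expect the main obstacle to be the equivalence free $\Leftrightarrow$ no periodic points, where the delicate point is that freeness is strictly stronger than topological freeness, so that a single periodic orbit already obstructs it, together with checking that the dictionary of Proposition \ref{reversible extension on topological level} transports periodic points faithfully in both directions without any being absorbed into the $Y$-dependent layers $X_N$. Once these are settled, the remaining verifications, in particular all of the second assertion, are routine dualisations modulo the properness remark.
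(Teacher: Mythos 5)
Your proposal is correct and follows essentially the same route as the paper: the first assertion via Corollary \ref{freeness implies gauge-invariance} together with the observation that, $\X$ being Hausdorff, freeness of $\tphi$ amounts to the absence of periodic points (transported to $\varphi$ through Proposition \ref{reversible extension on topological level}), and the second assertion via Theorem \ref{lattices descriptions main thm} combined with the dualization of $J$-pairs to $Y$-pairs. The paper compresses the dualization into "a simple remark"; your write-up merely fills in those details (including the properness/closedness point for $\varphi(V\cap\Delta)$), which is consistent with, not divergent from, the paper's argument.
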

\begin{proof}
Since $\X$ is Hausdorff, freeness of $\tphi$ (Definition \ref{phi invariance}) is equivalent to nonexistence of periodic points for $\tphi$. In view of our description of $\tphi$ the latter  is equivalent to nonexistence  of periodic points for $\varphi$. Thus the initial part of the assertion follows from Corollary \ref{freeness implies gauge-invariance}. The second part follows from Theorem \ref{lattices descriptions main thm} modulo a simple remark that a pair $(C_0(X\setminus V),C_0(X\setminus V'))$ forms  a $J$-pair for $(A,\alpha)$ if and only if $(V,V')$ form as a $Y$-pair for $(X,\varphi)$.
\end{proof}
We  say  that a (full) mapping $\varphi:X\to X$ is \emph{minimal} provided there are no nontrivial closed subsets $V$ of $X$ such that $\varphi(V)=V$. If $X$ is compact then $\varphi$  is minimal if and only if  there are no nontrivial closed positively invariant subsets $V$ of $X$, i.e. such that $\varphi(V)\subseteq V$. However,  when $X$ is not compact the  latter condition is much stronger then minimality. In fact, by \cite[Theorem B]{Gottschalk}, nonexistence of nontrivial closed positively invariant subsets in $(X,\varphi)$ forces $X$ to be compact. On the other hand,  there are interesting minimal mappings on non-compact spaces, cf. for instance \cite{Danilenko}.
\begin{thm}\label{simplicity in commutative case}
Let $(A,\alpha)$ be a commutative $C^*$-dynamical system and $(X,\varphi)$ its dual partial dynamical system. The  crossed product $C^*(A,\alpha)$ is simple if and only one of the two possible cases hold:
\begin{itemize}
\item[i)] $X$ is discrete, $\varphi$ is injective and $X$ compose of one non-periodic orbit $\OO$. 
In this case $C^*(A,\alpha)\cong \K(\ell^2(\OO))$ is the algebra of  compact operators on the $|\OO|$-dimensional Hilbert space.
\item[ii)] $X$ is not discrete   and $\varphi:X\to X$ is a minimal surjection. In this case $C^*(A,\alpha)\cong C^*(\X,\tphi)$ is the $C^*$-algebra of the  minimal homeomorphism $\tphi$ induced by $\varphi$ on the inverse limit space $\X$.
\end{itemize}
\end{thm}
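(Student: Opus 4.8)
The plan is to transport everything to the reversible system $(B,\beta)$ with $B=C_0(\X)$ and dual partial homeomorphism $\tphi$, and then read off the conclusions on $(X,\varphi)$ from Proposition \ref{reversible extension on topological level}. Since $C^*(A,\alpha)\cong C^*(B,\beta)$ with $B$ commutative, the commutative part of Corollary \ref{freeness implies gauge-invariance}, combined with Proposition \ref{ideals in a reversible case}, gives the criterion
$$
C^*(A,\alpha)\ \text{is simple}\iff \tphi\ \text{is free and}\ (\X,\tphi)\ \text{has no nontrivial invariant open set.}
$$
Indeed, when $\tphi$ is free every ideal is gauge-invariant and hence corresponds to an open $\tphi$-invariant subset of $\X$, so simplicity is exactly the absence of nontrivial such subsets; when $\tphi$ is not free the orbit representation of Example \ref{orbit representation} yields a proper nonzero non-gauge-invariant ideal. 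Because $\X$ is Hausdorff, under the absence of nontrivial invariant open sets freeness of $\tphi$ coincides with the absence of periodic points. Thus I must classify the minimal, periodic-point-free partial systems $(\X,\tphi)$.

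\textbf{The isolated-point dichotomy.} First I would split according to whether $\X$ has an isolated point. As $\tphi$ is a homeomorphism between the clopen sets $\TDelta$ and $\tphi(\TDelta)$, it sends isolated points to isolated points; hence in a minimal system (where every orbit is dense) the existence of one isolated point propagates along a dense orbit and forces $\X$ to be discrete and equal to a single orbit, necessarily non-periodic by freeness. This is case (i). Otherwise $\X$ is perfect, hence infinite and non-discrete, which will be case (ii). The technical engine in both cases is that the \emph{backward-terminal} set $X_0=\X\setminus\tphi(\TDelta)=\{\x:x_1=0\}$ and the \emph{forward-terminal} set $\X\setminus\TDelta=\{\x:x_0\notin\Delta\}$ are clopen by Proposition \ref{reversible extension on topological level}, and that every $\tphi$-orbit meets each of them in at most one point (the unique terminal element). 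Consequently, in a minimal system a nonempty terminal set equals the closure of a single orbit point inside a clopen set, hence is a single clopen, i.e. isolated, point.

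\textbf{Case (ii), the crux.} In the perfect case the last remark shows both terminal sets are empty, since a nonempty one would exhibit an isolated point. Emptiness of the forward-terminal set yields $\Delta=X$, while emptiness of $X_0$ yields $Y=\overline{X\setminus\varphi(\Delta)}=\emptyset$, i.e. $\varphi$ is onto; together $\varphi$ is a genuine surjection $X\to X$ and, by Proposition \ref{reversible extension on topological level}, $\X=\varprojlim(X,\varphi)$ with $\tphi$ the associated shift homeomorphism. Under the invariant-set correspondence, minimality of $\tphi$ translates into minimality of $\varphi$ (no nontrivial closed $V$ with $\varphi(V)=V$), and conversely a minimal surjection gives a minimal homeomorphism of the infinite space $\X$, which is automatically free; simplicity of $C^*(\X,\tphi)=C_0(\X)\rtimes_{\tphi}\Z$ then follows from the criterion of the first paragraph. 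I expect the main obstacle to be precisely this step: showing that simplicity forces $\tphi$ to be a \emph{full} homeomorphism, i.e. ruling out the ``boundary'' ideals stemming from the non-complemented behaviour of $\ker\alpha$. The mechanism that settles it is exactly the clopen-ness of the terminal sets together with density of orbits, as above.

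\textbf{Case (i) and bookkeeping.} For case (i), on a discrete single non-periodic orbit the faithful orbit representation of Example \ref{orbit representation} realizes $C^*(B,\beta)$ on $\ell^2(\OO)$ as the algebra generated by the diagonal $c_0(\OO)$ and the shift, which is all of $\K(\ell^2(\OO))$; dualizing the inverse-limit description shows that discreteness of $\X$ forces $X$ discrete, $\varphi$ injective, and $X$ a single non-periodic $\varphi$-orbit of the same cardinality, whence $C^*(A,\alpha)\cong\K(\ell^2(\OO))$. Finally I would note that the two cases are exclusive and exhaustive and cross-check against Theorem \ref{theorem on simplicity}: simplicity forces $\alpha$ minimal together with either pointwise quasinilpotence or injectivity with no inner powers, and pointwise quasinilpotence is impossible in the perfect case (a dense orbit in a non-compact space cannot escape to infinity), so it aligns exactly with the discrete case (i) while the monomorphism alternative aligns with case (ii). The residual work — verifying the order correspondences between $Y$-pairs, invariant sets, and the terminal-set bookkeeping — I would relegate to routine direct computation.
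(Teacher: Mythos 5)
Your proposal is correct and takes essentially the same route as the paper's proof: both reduce to the reversible commutative system $(\X,\tphi)$, characterize simplicity as minimality plus freeness via Corollary \ref{freeness implies gauge-invariance} and the orbit representations of Example \ref{orbit representation}, and settle the dichotomy by the observation that a terminal set (the paper's $\OO\setminus\Delta$, your $\X\setminus\TDelta$ and $\X\setminus\tphi(\TDelta)$) is clopen and meets each dense orbit in at most one point, hence is at most a single isolated point. Your isolated-point/perfect split is merely a reorganization of the paper's argument that $\Delta\neq X$ (or, applying the same reasoning to $\varphi^{-1}$, $\varphi(\Delta)\neq X$) forces $X$ to be discrete, and your transfer back to $(X,\varphi)$ via Proposition \ref{reversible extension on topological level} matches the paper's concluding step.
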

\begin{proof} Let  $C^*(A,\alpha)$ be simple. Assume  that $\varphi:\Delta \to \varphi(\Delta)$ is a homeomorphism and $\varphi(\Delta)\subseteq X$ is  clopen  (think of it as of  the system $(\X,\tphi)$ described in Proposition \ref{reversible extension on topological level}). 
Evidently,  any orbit $\OO$ and hence its closure is a $\varphi$-invariant set, cf. Definition \ref{phi invariance}. Therefore,    by Corollary \ref{freeness implies gauge-invariance}, $X$ is the closure of $\OO$ and $\OO$ is not a periodic orbit.

Suppose  that $X$ is discrete. Then   $X=\OO$. Since $\OO$ is not periodic,   $(X,\varphi)$ is up to conjugacy either a truncated shift on $\{1,...,n\}$, one sided shift on $\N$, or a two-sided shift on $\Z$. In each of these cases   representations described in  Example \ref{orbit representation} yield  the isomorphism $C^*(A,\alpha)\cong \K(\ell^2(\OO))$, cf.  Theorem \ref{topological freeness for reversible systems theorem}.

Now let $X$ be arbitrary. We claim that   if  $\Delta\neq X$ then $X$ is  discrete. Indeed, if $X\setminus \Delta=\overline{\OO}\setminus \Delta\neq \emptyset$ then $\OO\setminus \Delta =\{x_0\}$ must be a singletone. As  $\OO\subseteq  \Delta \cup \{x_0\}$ we actually  have $X=\overline{\OO}= \Delta \cup \{x_0\}$. Therefore  $\{x_0\}$ is clopen in $X$. Consequently,   $\OO= \bigcup_{n\in \N} \varphi^{-n}(x_0)$ is an open discrete set. Clearly it is $\varphi$-invariant, and thus by minimality $X=\OO=\bigcup_{n\in \N} \varphi^{-n}(x_0)$ is discrete. 

Applying the above argument to the system $(X,\varphi^{-1})$ we see that $X$ is  discrete also when  $\varphi(\Delta)\neq X$. Thus if $X$ is not discrete then  $\Delta=\varphi(\Delta)=X$ and this finishes the proof in the reversible case. 

Now suppose $(X,\varphi)$ is a general partial dynamical system (with $\varphi$ not necessarily injective). Then as we have shown the  reversible system $(\X,\tphi)$ described in Proposition \ref{reversible extension on topological level} satisfies the assertion. A moment of thought leads to the conclusion that if $(\X,\tphi)$ is as described in the case i) then  $(X,\varphi)=(\X,\tphi)$. Similarly,  $\tphi:\X\to \X$ is a full minimal homeomorphism iff  $\varphi:X\to X$ is a full minimal surjection.
\end{proof}

\section{Appendix: $C^*(A,\alpha,J)$ viewed as a relative Cuntz-Pimsner algebra}\label{appendix}

  In this section we briefly outline how to obtain our description of the lattice of gauge-invariant ideals    using general machinery of relative Cuntz-Pimsner algebras. For more details, concerning  the latter  we refer the reader to  \cite{ms}, \cite{kwa-leb}, \cite{katsura1}.

A $C^*$-correspondence over a $C^*$-algebra $A$ is a right Hilbert $A$-module $X$ with a left action $\phi_X:A\to \L(X)$ of $A$ on $X$ via adjointable operators. We let $J(X):=\phi^{-1}(\K(X))$  to be the ideal in $A$ consisting of  elements that act from the left on $X$ as generalized compact operators. 
For any ideal $J$ in $J(X)$ the  relative Cuntz-Pimsner algebra $\OO(J,X)$ is constructed as a quotient of  the $C^*$-algebra generated by  Fock representation of $X$, see \cite[Definition 2.18]{ms} or \cite[Definition 4.9]{kwa-leb}. The $C^*$-algebra $\OO(J,X)$ is universal with respect to appropriately defined representations  of $X$, see \cite[Remark 1.4]{fmr} or \cite[Proposition 4.10]{kwa-leb}. It is equipped with a gauge circle action which acts as identity on the image of $A$ in $\OO(J,X)$. Katsura, in \cite{ka3}, described ideals in $\OO(J,X)$ that are invariant under this action in the following way.

For any ideal $I$ in $A$ we define two another ideals 
$$
X(I):=\clsp\{\langle x,a\cdot y\rangle_A\in A: a\in I,\,\, x,y\in X  \}, 
$$ 
$$
X^{-1}(I):=\{a\in A: \langle x,a\cdot y\rangle_A \in I \textrm{ for all } x,y\in X  \}. 
$$
If $X(I)\subseteq I$, then the ideal  $I$ is said to be \emph{positively invariant}, \cite[Definition 4.8]{ka3}. 
For any positively invariant ideal $I$ we have a naturally defined quotient $C^*$-correspondence $X_I=X/XI$ over $A/I$. Denoting by $q_I:A\to A/I$ the quotient map one puts
$$
J_X(I):=\{a\in A: \phi_{X_I}(q_I(a))\in \K(X_I), \, \, aX^{-1}(I) \subseteq I\}.
$$   
\begin{defn}[Definition 5.6 in \cite{ka3}]Let $X$ be a $C^*$-correspondence over a $C^*$-algebra $A$. A \emph{$T$-pair} of
$X$ is a pair $(I,I')$ of ideals $I$, $I'$ of $A$ such that $I$ is positively invariant and
$I\subseteq I'\subseteq  J_X(I)$.
\end{defn}
The content of \cite[Proposition 11.9]{ka3}  is the following:
\begin{thm}\label{thm for referee}
Let $X$ be a $C^*$-correspondence over a $C^*$-algebra $A$, and $J$ be
an ideal of $A$ contained in $J(X)$. Then there exists a one-to-one correspondence
between the set of all gauge-invariant ideals of $O(J,X)$ and the set of all $T$-pairs $(I, I')$ of $X$ satisfying $J\subseteq I'$, which preserves inclusions and intersections.
\end{thm}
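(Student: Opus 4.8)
The plan is to follow the standard strategy for gauge-invariant ideals of relative Cuntz-Pimsner algebras, which runs exactly parallel to the argument we already used for crossed products in Proposition \ref{ideals in a reversible case} and Theorem \ref{lattices descriptions main thm}. Write $(\iota_A,\iota_X)$ for the universal representation of $X$ generating $\OO(J,X)$, so that $\iota_X(x)^*\iota_X(y)=\iota_A(\langle x,y\rangle_A)$, $\iota_X(a\cdot x)=\iota_A(a)\iota_X(x)$, and $\iota_A(a)=\psi(\phi_X(a))$ for $a\in J$, where $\psi:\K(X)\to\OO(J,X)$ is the homomorphism induced by $\iota_X$ on the generalized compact operators. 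The two maps to be constructed are a passage from gauge-invariant ideals to $T$-pairs and its inverse, and the glue holding them together is the gauge-invariant uniqueness theorem for $\OO(J,X)$.

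First I would define the passage from a gauge-invariant ideal $\II$ to a $T$-pair. Put
$$
I:=\{a\in A:\iota_A(a)\in\II\}.
$$
Using the relations above, for $a\in I$ one has $\iota_A(\langle x,a\cdot y\rangle_A)=\iota_X(x)^*\iota_A(a)\iota_X(y)\in\II$, hence $\langle x,a\cdot y\rangle_A\in I$; this yields $X(I)\subseteq I$, so $I$ is positively invariant and the quotient correspondence $X_I=X/XI$ over $A/I$ is well defined. Since $\II$ is gauge-invariant and contains $\iota_A(I)$, the quotient $\OO(J,X)/\II$ carries an induced representation of $X_I$ over $A/I$, and I would let $I'$ be the preimage under $q_I$ of the covariance ideal of this induced representation. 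By construction $I\subseteq I'$ and $q_I(I')\subseteq J(X_I)$, which translates exactly to $I'\subseteq J_X(I)$, while the defining relation $\iota_A(a)=\psi(\phi_X(a))$ for $a\in J$ forces $J\subseteq I'$. Thus $(I,I')$ is a $T$-pair with $J\subseteq I'$.

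Conversely, given a $T$-pair $(I,I')$ with $J\subseteq I'$, I would identify the associated ideal by exhibiting the quotient as a relative Cuntz-Pimsner algebra, namely $\OO(J,X)/\II\cong\OO(q_I(I'),X_I)$. Composing the quotient maps $q_I:A\to A/I$ and $X\to X_I$ with the universal representation of $X_I$ gives a representation of $X$ that is covariant on $J$: this uses precisely $J\subseteq I'$, since then $q_I(J)\subseteq q_I(I')$, the ideal on which $\OO(q_I(I'),X_I)$ imposes the Cuntz-Pimsner relation. The universal property of $\OO(J,X)$ then yields a surjection $\OO(J,X)\to\OO(q_I(I'),X_I)$; letting $\II$ be its kernel, one invokes the gauge-invariant uniqueness theorem, exactly as in the argument establishing \eqref{quotient isomorphism on B level}, to conclude that the induced map on $\OO(J,X)/\II$ is an isomorphism, injectivity on the fixed-point algebra being guaranteed by the $T$-pair conditions. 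Running the two constructions against each other shows they are mutually inverse, and both manifestly respect inclusions, whence also intersections.

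The main obstacle is the correct identification of the admissible upper bound $J_X(I)$ for the second coordinate $I'$. This is the genuinely subtle content of the theorem: it records that, outside the complemented or injective situation, not every gauge-invariant ideal is generated by its intersection with $A$, and it is the Cuntz-Pimsner incarnation of the second ideal appearing in our $J$-pairs (Theorem \ref{invariant ideals and J-pairs thm}). Pinning down $J_X(I)$ as the set of $a$ for which $\phi_{X_I}(q_I(a))$ is compact and $aX^{-1}(I)\subseteq I$, and verifying that no larger choice of $I'$ is compatible with the relative relation after passing to the quotient, is where the real work lies; once this is settled, the remaining bookkeeping about order and intersections is routine.
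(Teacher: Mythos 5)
A preliminary remark: the paper itself does not prove this statement. It is quoted verbatim from Katsura (\cite[Proposition 11.9]{ka3}) and used in the appendix purely as a black box from which Theorem \ref{lattices descriptions main thm} could alternatively be recovered; the paper's own, independent proof of its ideal-lattice results goes through reversible extensions. So your proposal has to stand on its own. Its architecture --- transplanting the scheme of Proposition \ref{ideals in a reversible case} and Theorem \ref{lattices descriptions main thm} to correspondences --- is reasonable, but it has genuine gaps at exactly the points that carry the weight of Katsura's result.

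First, your inline justification ``$q_I(I')\subseteq J(X_I)$, which translates exactly to $I'\subseteq J_X(I)$'' is wrong as stated: $J_X(I)$ is cut out by \emph{two} conditions, compactness of $\phi_{X_I}(q_I(a))$ and $aX^{-1}(I)\subseteq I$, and membership in $J(X_I)$ gives only the first. The second is equivalent to $q_I(a)\ker\phi_{X_I}=\{0\}$ (because $q_I(X^{-1}(I))=\ker\phi_{X_I}$), and to obtain it you must use injectivity of the induced representation $(\dot{\pi},\dot{t})$ of $X_I$ on $\OO(J,X)/\II$ together with the identity $\psi_{\dot{t}}(T)\dot{\pi}(c)=\psi_{\dot{t}}(T\phi_{X_I}(c))$ for $T\in\K(X_I)$: for $c\in\ker\phi_{X_I}$ this gives $\dot{\pi}(q_I(a)c)=0$, hence $q_I(a)c=0$. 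You flag this as ``where the real work lies'' but never do it. Second, the claim that the two constructions are mutually inverse is asserted, not proved, and it needs two nontrivial inputs that appear nowhere in your argument: (a) that $A/I$ embeds into $\OO(q_I(I'),X_I)$ --- the Fowler--Muhly--Raeburn embedding theorem \cite{fmr}, applicable precisely because the $T$-pair conditions give $q_I(I')\subseteq J(X_I)\cap(\ker\phi_{X_I})^\bot$ --- and (b) that the covariance ideal of the universal representation of $X_I$ in $\OO(q_I(I'),X_I)$ is exactly $q_I(I')$ and nothing larger; without (a) and (b) you cannot recover $(I,I')$ from the kernel of the surjection $\rho:\OO(J,X)\to\OO(q_I(I'),X_I)$. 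Third, your appeal to the gauge-invariant uniqueness theorem is misplaced: once $\II$ is defined as $\ker\rho$, the induced map on $\OO(J,X)/\II$ is an isomorphism trivially. Where a uniqueness theorem is genuinely needed is in the opposite composite (ideal $\to$ pair $\to$ ideal), to show that the canonical surjection $\OO(q_I(I'),X_I)\to\OO(J,X)/\II$ is injective; and the gauge-invariant uniqueness theorem for relative Cuntz--Pimsner algebras in the required form (faithful on coefficients, gauge-compatible, covariance ideal exactly $q_I(I')$) is itself a result of essentially the same depth as the proposition being proved. As it stands, your proposal reduces Katsura's proposition to three substantial theorems, none of which is proved or even cited.
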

 Let us now fix a $C^*$-dynamical system $(A,\alpha)$ and an ideal $J\subseteq (\ker\alpha)^\bot$.  We associate to $(A,\alpha)$  a $C^*$-correspondence  $X_\alpha$ which as a vector space is equal to $\overline{\alpha}(1)A$  and the other operations are given by   
$$
a\cdot x := \alpha(a)x \qquad x\cdot a := xa,\qquad  \langle x,y\rangle_A:= x^*y, \qquad a\in A,\,\, x,y\in X_\alpha.
$$
One can show that $J(X_\alpha)=A$.  Hence we can consider the relative Cuntz-Pimsner algebra $\OO(J,X_\alpha)$. Moreover, there is a natural isomorphism 
\begin{equation}\label{isomorphism for referee}
C^*(A,\alpha,J)\cong \OO(J,X_\alpha)
\end{equation}
which maps $\iota(A)$ and $u^*\iota(A)$, respectively,  onto the image of $A$ and $X_\alpha$ in  $\OO(J,X_\alpha)$. The latter fact was noticed in \cite[Example 1.6]{fmr} and \cite[Corollary 4.14]{kwa-leb} under the assumption that $A$ is unital, but the arguments carry out easily to our  more general situation. In particular,  \eqref{isomorphism for referee} induces a lattice isomorphism between the gauge-invariant ideals in $C^*(A,\alpha,J)$ and  $\OO(J,X_\alpha)$.

Clearly, for any ideal $I$ in $A$ we have
$
X_\alpha(I)=A\alpha(I)A$ and  $X_\alpha^{-1}(I)=\alpha^{-1}(I).
$
In particular, $I$ is positively invariant for $X$ if and only if $I$ is positively invariant in $(A,\alpha)$.
If $I$ is positively invariant then the quotient $C^*$-correspondence $X/XI$ can be identified with the $C^*$-correspondence $X_{\alpha_I}$ associated to the quotient system $(A/I,\alpha_I)$. Hence $J(X_I)=A/I$ and we get 
$
J_{X_\alpha}(I)=\{a\in A: a\alpha^{-1}(I)\subseteq I\}.
$
Accordingly, if $I$ and $I'$ are ideals in $A$ we have 
$$
(I,I') \text{ is a }T\text{-pair with } J\subseteq I' \,\, \Longleftrightarrow\,\,  (I,I') \text{ is a }J\text{-pair for } (A,\alpha).
$$
In this way we can infer from Theorem \ref{thm for referee} the  first part of Theorem \ref{lattices descriptions main thm}:
\begin{cor}
Let $(A,\alpha)$ be a $C^*$-dynamical system and $J$ an ideal in $(\ker\alpha)^\bot$. We have an order isomorphism  from  the lattice of gauge-invariant  ideals $\II$ in $C^*( A,\alpha,J)$ onto the lattice of  $J$-pairs $(I,I')$ for $(A,\al)$.
\end{cor}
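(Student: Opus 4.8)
The proof assembles the ingredients prepared above. The plan is to read the gauge-invariant ideals of $C^*(A,\alpha,J)$ off through the isomorphism \eqref{isomorphism for referee} and then apply Katsura's classification, Theorem \ref{thm for referee}. Since \eqref{isomorphism for referee} carries $\iota(A)$ onto the image of $A$ in $\OO(J,X_\alpha)$ and intertwines the two circle actions, it restricts to an order isomorphism between the lattice of gauge-invariant ideals of $C^*(A,\alpha,J)$ and that of $\OO(J,X_\alpha)$. As $J \subseteq (\ker\alpha)^\bot \subseteq A = J(X_\alpha)$, Theorem \ref{thm for referee} then supplies an inclusion-preserving bijection between the gauge-invariant ideals of $\OO(J,X_\alpha)$ and the $T$-pairs $(I,I')$ of $X_\alpha$ with $J \subseteq I'$.

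It remains to identify these $T$-pairs with the $J$-pairs for $(A,\alpha)$, which is the equivalence recorded just above the statement. Concretely, from $X_\alpha^{-1}(I) = \alpha^{-1}(I)$ and $J_{X_\alpha}(I) = \{a \in A : a\alpha^{-1}(I) \subseteq I\}$ one sees that $I' \subseteq J_{X_\alpha}(I)$ amounts to the inclusion $I'\alpha^{-1}(I) \subseteq I$, which, since the product of two ideals agrees with their intersection, reads $I' \cap \alpha^{-1}(I) \subseteq I$. Positive invariance gives $I \subseteq \alpha^{-1}(I)$, so once $I \subseteq I'$ the reverse inclusion $I \subseteq I' \cap \alpha^{-1}(I)$ holds automatically; conversely the equality $I' \cap \alpha^{-1}(I) = I$ itself forces $I \subseteq I'$. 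Hence a $T$-pair with $J \subseteq I'$ is precisely a positively invariant ideal $I$ together with an ideal $I'$ satisfying $J \subseteq I'$ and $I' \cap \alpha^{-1}(I) = I$, that is, a $J$-pair.

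Since both the bijection of Theorem \ref{thm for referee} and this $T$-pair/$J$-pair identification respect inclusions componentwise, their composite is the asserted order isomorphism. I anticipate no genuine obstacle: the only points requiring care are the identity $I'\alpha^{-1}(I) = I' \cap \alpha^{-1}(I)$ for ideals of a $C^*$-algebra and the observation that the clause $I \subseteq I'$ is redundant once $I' \cap \alpha^{-1}(I) = I$ is imposed.
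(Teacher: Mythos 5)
Your proposal is correct and follows essentially the same route as the paper's own derivation of this corollary in the appendix: transfer gauge-invariant ideals through the isomorphism $C^*(A,\alpha,J)\cong \OO(J,X_\alpha)$, invoke Katsura's classification (Theorem \ref{thm for referee}), and identify $T$-pairs $(I,I')$ with $J\subseteq I'$ with $J$-pairs via the computations $X_\alpha^{-1}(I)=\alpha^{-1}(I)$ and $J_{X_\alpha}(I)=\{a\in A: a\alpha^{-1}(I)\subseteq I\}$. Your two flagged points (that $I'\alpha^{-1}(I)=I'\cap\alpha^{-1}(I)$ for closed ideals, and that $I\subseteq I'$ is forced by $I'\cap\alpha^{-1}(I)=I$) are exactly the observations needed and are handled correctly.
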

 In a similar way, using isomorphism  \eqref{isomorphism for referee}, one can apply to $C^*(A,\alpha,J)$ other general results for relative Cuntz-Pimsner algebras. For instance,  it follows from \cite[Theorem 7.1]{katsura} and \cite[Theorem 7.2]{katsura}, respectively that
$$
A \textrm{ is exact } \Longleftrightarrow\,\,\, C^*(A,\alpha,J)\textrm{ is exact}.
$$
$$
A \textrm{ is nuclear } \Longrightarrow\,\,\, C^*(A,\alpha,J)\textrm{ is nuclear}.
$$
If $A$ is separable, then $C^*(A,\alpha,J)$ is separable and by the argument leading to  \cite[Proposition 8.8]{katsura}  we have  
$$
\textrm{both } A \textrm{ and  } J \textrm{ satisfy the UCT }\,\,\, \Longrightarrow\,\,\, C^*(A,\alpha,J)\textrm{ satisfy the UCT}.
$$

\end{document}